\numberwithin{equation}{section}
\newtheorem{Theorem}{Theorem}[section]
\newtheorem{Lemma}[Theorem]{Lemma}
\newtheorem{Corollary}[Theorem]{Corollary}
\newtheorem{Proposition}[Theorem]{Proposition}
\newcommand{\cal}{\mathcal}
\newcommand{\R}{\mathbb{R}}
\newcommand{\del}{\partial}
\newcommand{\beq}{\begin{equation}}
\newcommand{\eeq}{\end{equation}}
\newcommand{\bml}{\begin{multline}}
\newcommand{\eml}{\end{multline}}
\newcommand{\bTh}{\begin{Theorem}}
\newcommand{\eTh}{\end{Theorem}}
\newcommand{\bCo}{\begin{Corollary}}
\newcommand{\eCo}{\end{Corollary}}
\newcommand{\bLem}{\begin{Lemma}}
\newcommand{\eLem}{\end{Lemma}}
\newcommand{\bProp}{\begin{Proposition}}
\newcommand{\eProp}{\end{Proposition}}
\newcommand{\LM}[1]{{L^{#1}(M)}}
\newcommand{\Lp}[2]{{L^{#1}(#2)}}
\newcommand{\leC}{\lesssim}
\newcommand{\Ntube}[1]{\cal{N}_{#1} (\gamma)}
\newcommand{\eps}{\varepsilon}
\renewcommand{\phi}{\varphi}
\begin{document}

\hyphenpenalty=1000000

\title{Concentration of eigenfunctions near a concave boundary}
\author{Sinan Ariturk}
\begin{abstract}
This paper concerns the concentration of Dirichlet eigenfunctions of the Laplacian on a compact two-dimensional Riemannian manifold with strictly geodesically concave boundary.
We link three inequalities which bound the concentration in different ways.
We also prove one of these inequalities, which bounds the $L^p$ norms of the restrictions of eigenfunctions to broken geodesics.
\end{abstract}

\maketitle

\section{Introduction}

	Let $(M,g)$ be a compact two-dimensional Riemannian manifold with smooth boundary.
	Assume that the boundary is strictly geodesically concave.
	This means that for any point $x$ in $\del M$, there is a geodesic in $M$ which goes through $x$ intersecting $\del M$ tangentially with exactly first order contact.
	Let $e_j$ be Dirichlet eigenfunctions of the Laplacian $\Delta_g$ which form an orthonormal basis of $L^2(M)$.
	Let $0 < \lambda_0 \le \lambda_1 \le \lambda_2 \le \ldots$ be the corresponding eigenvalues, normalized so that $-\Delta_g e_j = \lambda_j^2 e_j$.
	This paper concerns the concentration of the eigenfunctions $e_j$.
	
	One way to measure the concentration of the eigenfunctions is by their $L^p$ norms.
	For $p \ge 2$, the eigenfunctions satisfy
	\beq
	\label{sogest}
		\| e_j \|_\LM{p} \leC \lambda_j^{\delta(p)}
	\eeq
	where
	\[
		\delta(p) = 
		\begin{cases} 
			\frac{1}{4}-\frac{1}{2p} & \text{if } 2 \le p \le 6 \\
			\frac{1}{2}-\frac{2}{p} & \text{if } 6 \le p \le \infty
		\end{cases}
	\]
	This was proven by Grieser \cite{G}.
	We can interpret \eqref{sogest} as a way of bounding the concentration of the eigenfunctions.
	For $p>2$, a natural problem is to determine when \eqref{sogest} is sharp, meaning
	\beq
	\label{sogsharp}
		\limsup_{j \to \infty} \lambda_j^{-\delta(p)} \| e_j \|_\LM{p} > 0
	\eeq
	We will give two conditions which are equivalent to \eqref{sogsharp} when $2<p<6$.
	Specifically, we will consider two other inequalities which measure the concentration of eigenfunctions.
	We will then see that sharpness of these inequalities is equivalent to \eqref{sogsharp} when $2<p<6$.
	
	Our second way of measuring the concentration of eigenfunctions is by the $L^p$ norms of their restrictions to broken geodesics.
	A broken geodesic is a curve in $M$ which is geodesic away from the boundary and reflects off the boundary according to the reflection law for $g$.
	We bound this kind of concentration in the following theorem.
	
	\bTh
	\label{thbgt}
		If $\gamma$ is a broken geodesic of unit length in $M$, then
		\[
			\| e_j \|_\Lp{p}{\gamma} \leC \lambda_j^{\sigma(p)}
		\]
		where
		\[
		\sigma(p) = 
		\begin{cases} 
			\frac{1}{4} & \text{if } 2 \le p \le 4 \\
			\frac{1}{2}-\frac{1}{p} & \text{if } 4 \le p \le \infty
		\end{cases}
		\]
	\eTh
	
	This extends a result of Burq-G\'erard-Tzvetkov \cite{BGT}.
	Their result dealt with compact two-dimensional Riemannian manifolds without boundary.
	Their work was motivated by Reznikov \cite{R} who considered hyperbolic surfaces.
	Note that in proving Theorem \ref{thbgt}, it suffices to prove the case $p=4$.
	The case $p=\infty$ follows from \eqref{sogest} since the eigenfunctions are continuous.
	Then interpolation will yield the cases $4<p<\infty$, and H\"older's inequality will yield the cases $2\le p < 4$.
	Another way to bound the $L^2$ norms over broken geodesics is given by the following corollary.
	
	\bCo
	\label{cob}
		If $\gamma$ is a broken geodesic of unit length, $p \ge 2$, and $\eps>0$, then there is a constant $C_\eps$ such that
		\[
			\| e_j \|_\Lp{2}{\gamma} \le C_\eps \lambda_j^\frac{1}{2p} \| e_j \|_\LM{p} + \eps \lambda_j^\frac{1}{4}
		\]
	\eCo
	
	For two-dimensional manifolds without boundary, Bourgain \cite{B} gave a stronger version of this inequality, without the second term in the right side.
	In section 5, we will use his result and Theorem \ref{thbgt} to prove Corollary~\ref{cob}.
	
	We will link sharpness of Theorem \ref{thbgt} for $p=2$ and sharpness of \eqref{sogest} for $2<p<6$.
	Let $\Pi$ be the set of all unit length broken geodesics in $M$.
	We will show that for $2<p<6$, the inequality \eqref{sogsharp} is equivalent to
	\[
	\label{bgtsharp}
		\limsup_{j \to \infty} \sup_{\gamma \in \Pi} \lambda_j^{-\frac{1}{4}} \| e_j \|_\Lp{2}{\gamma} > 0
	\]
	
	Our third way of measuring the concentration of eigenfunctions is by their $L^2$ norms over neighborhoods of broken geodesics.
	For $\gamma$ in $\Pi$, define the neighborhoods
	\[
		\cal{N}_j(\gamma) = \Big\{ x \in M : d_g(x, \gamma) < \lambda_j^{-\frac{1}{2}} \Big\}
	\]
	Here $d_g$ is the Riemannian distance function corresponding to $g$.
	Trivially, we have
	\[
		\| e_j \|_\Lp{2}{\Ntube{j}} \le 1
	\]
	For $2<p<6$, we will also show that \eqref{sogsharp} is equivalent to
	\[
		\limsup_{j \to \infty} \sup_{\gamma \in \Pi} \| e_j \|_\Lp{2}{\Ntube{j}} > 0
	\]
	This will be a consequence of the following theorem.
	
	\bTh
	\label{thlink}
		Assume $\Lambda$ is large and fix $\eps>0$.
		There is a constant $C_\eps$ such that for $\lambda_j \ge \Lambda$, the eigenfunctions $e_j$ satisfy
		\[
			\| e_j \|_\LM{4}^4 \le C_\eps \lambda_j^\frac{1}{2} \sup_{\gamma \in \Pi} \| e_j \|_\Lp{2}{\cal{N}_j(\gamma)}^2 + \eps \lambda_j^\frac{1}{2} + C
		\]
	\eTh
	
	This extends a result of Sogge \cite{Sog}, who considered compact two-dimensional Riemannian manifolds without boundary.
	Corollary \ref{cob} and Theorem \ref{thlink} imply the following result.
	
	\bCo
	\label{clink}
		Let $e_{j_k}$ be a subsequence of eigenfunctions and let $2<p<6$.
		The following are equivalent:
		\beq
		\label{sogsh}
			\limsup_{k \to \infty} \lambda_{j_k}^{-\delta(p)} \| e_{j_k} \|_\LM{p} > 0
		\eeq
		\beq
		\label{tubesh}
			\limsup_{k \to \infty} \sup_{\gamma \in \Pi} \| e_{j_k} \|_\Lp{2}{\Ntube{j_k}} > 0
		\eeq
		\beq
		\label{bgtsh}
			\limsup_{k \to \infty} \sup_{\gamma \in \Pi} \lambda_{j_k}^{-\frac{1}{4}} \| e_{j_k} \|_\Lp{2}{\gamma} > 0
		\eeq
	\eCo
	
	If \eqref{sogsh} holds for some $p$ in the range $2<p<6$, then it holds for all such $p$, by \eqref{sogest} and interpolation.
	So to prove Corollary \ref{clink}, it suffices to consider the case $p=4$.
	In this case, \eqref{sogsh} implies \eqref{tubesh} by Theorem \ref{thlink}.
	It is clear that \eqref{tubesh} implies \eqref{bgtsh}, and \eqref{bgtsh} implies \eqref{sogsh} by Corollary \ref{cob}.
	
	A related problem is to determine when a subsequence $e_{j_k}$ of eigenfunctions is quantum ergodic.
	To define this condition, let $S^*M$ be the unit cosphere bundle.
	The eigenfunctions $e_j$ induce distributions $U_j$ on $S^*M$ defined by
	\[
		U_j(a)=\Big \langle Op(a)e_j, e_j \Big \rangle
	\]
	where $Op(a)$ is the pseudodifferential operator, for a fixed quantization, with complete symbol $a$.
	To say a subsequence $e_{j_k}$ of eigenfunctions is quantum ergodic means that the weak* limit of the distributions $U_{j_k}$ is the normalized Liouville measure on $S^*M$.
	This definition is independent of the choice of quantization.
	In particular, this implies that the probability measures $|e_{j_k}|^2 \,dx$ converge weakly to the normalized Riemannian measure.
	In this case \eqref{tubesh} cannot hold, so Corollary \ref{clink} implies the following.
	
	\bCo
	Assume a subsequence $e_{j_k}$ of eigenfunctions is quantum ergodic.
	Then
	\[
		\limsup_{k \to \infty} \sup_{\gamma \in \Pi} \lambda_{j_k}^{-\frac{1}{4}} \| e_{j_k} \|_\Lp{2}{\gamma} = 0
	\]
	and for $2<p<6$,
	\[
		\limsup_{k \to \infty} \lambda_{j_k}^{-\delta(p)} \| e_{j_k} \|_\LM{p} = 0
	\]	
	\eCo

	Zelditch-Zworski \cite{ZZ} proved that if the billiard flow is ergodic, then there is a subsequence $e_{j_k}$ of density one which is quantum ergodic.
	A subsequence is of density one when
	\[
		\lim_{k \to \infty} \frac{k}{j_k} = 1
	\]
	Their result demonstrates that the global dynamics of the billiard flow influence the concentration of eigenfunctions.
	Our last result also demonstrates this.
	
	\bProp
	\label{open}
		Fix a broken geodesic $\gamma$  in $M$ of unit length which is not contained in a periodic broken geodesic.
		Then
		\[
			\limsup_{j \to \infty} \lambda_j^{-\frac{1}{4}} \| e_j \|_\Lp{2}{\gamma} = 0
		\]
	\eProp
	
	That is, if Theorem 1.1 is sharp for a fixed broken geodesic, then it must be a segment of a periodic broken geodesic.
	
\subsection*{Acknowledgements}
	I would like to thank Christopher Sogge for suggesting this problem and for his invaluable guidance.

\section{Reductions}
	
	The beginning of the proofs of Theorem ~\ref{thbgt} and Theorem ~\ref{thlink} are similar so we begin both in this section.
	We can assume that $M$ is a subset of a boundaryless compact two-dimensional Riemannian manifold $(M_0, g)$.
	Let $d_0$ be the Riemannian distance function on $M_0$ corresponding to $g$ and let $\Delta_0$ be the Laplacian on $M_0$.
	For the rest of this paper, we will assume $\lambda \ge 1$.
	
	Fix a small $\delta > 0$, and choose a $\chi \in \mathcal{S}(\mathbb{R})$
		with $\chi(0)=1$ and $\hat{\chi}$ supported on a closed interval contained strictly inside of $(\frac{1}{2}\delta, \delta)$.
	Define the translations $\chi _\lambda (s)=\chi(s-\lambda)$.
	We will use the operators $\chi_\lambda(\sqrt{-\Delta_g})$ and $\chi_\lambda(\sqrt{-\Delta_0})$.
	Here $\sqrt{-\Delta_g}$ is defined with respect to Dirichlet boundary conditions.
	Notice
	\[
		\chi_\lambda(\lambda) = 1
	\]
	Define $\rho_\lambda(s)=\chi_\lambda(s)+\chi_\lambda(-s)$.
	For large $\lambda$, we have
	\[
		1/2  \le | \rho_\lambda(\lambda) |
	\]
	Define the set
	\begin{equation*}
		H_\delta = \Big\{x \in M : d_g(x, \partial M) \le \delta \Big\}
	\end{equation*}
	and let $E_\delta$ be the complement of $H_\delta$ in $M$.
	To prove Theorem ~\ref{thbgt}, it suffices to prove that
		\begin{equation}
		\label{specgeorestineq}
					\| \rho_\lambda(\sqrt{-\Delta_g}) f \|_{L^4(\gamma \cap E_\delta)} + \| \chi_\lambda(\sqrt{-\Delta_g}) f \|_{L^4(\gamma \cap H_\delta)} \lesssim \lambda^{1/4} \| f \|_{L^2(M)}
			\end{equation}
	We have the following analogue.
	
	\begin{Theorem}
		\label{bgtth}
			If $\gamma$ is a smooth curve on $M_0$ of unit length, then
			\[
					\| \rho_\lambda(\sqrt{-\Delta_0}) f \|_{L^4(\gamma)} + \| \chi_\lambda(\sqrt{-\Delta_0}) f \|_{L^4(\gamma)} \lesssim \lambda^{1/4} \| f \|_{L^2(M_0)}
				\]
		\end{Theorem}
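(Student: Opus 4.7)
My plan is to reduce matters to bounding the half-wave piece $\chi_\lambda(\sqLapo)$, represent it as a Fourier integral operator via the Hadamard parametrix, and then deduce the $\Lp{4}{\gamma}$ estimate from a $TT^*$ argument in which the composed kernel on $\gamma\times\gamma$ is controlled by stationary phase and Hardy--Littlewood--Sobolev.

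Because $\sqLapo\ge 0$ and $\chi\in\mathcal{S}(\mathbb{R})$, the reflected piece $\chi_\lambda(-\sqLapo)$ is $O(\lambda^{-N})$ in operator norm for every $N$, so $\rho_\lambda(\sqLapo) = \chi_\lambda(\sqLapo) + O(\lambda^{-N})$ and it suffices to treat $\chi_\lambda(\sqLapo)$. Writing
\[
	\chi_\lambda(\sqLapo) = \frac{1}{2\pi}\int \hat\chi(t)\, e^{i\lambda t}\, e^{-it\sqLapo}\, dt
\]
and using that $\hat\chi$ is supported in the short interval $(\delta/2,\delta)$ (taken smaller than the injectivity radius of $M_0$), the Hadamard parametrix for $e^{-it\sqLapo}$ together with stationary phase in $t$ yields a kernel representation
\[
	K_\lambda(x,y) = \lambda^{1/2}\, a(x,y,\lambda)\, e^{i\lambda d_0(x,y)} + R_\lambda(x,y),
\]
with $a$ a classical symbol of order $0$ in $\lambda$ supported where $d_0(x,y)\sim\delta$ and $R_\lambda$ a smoothing remainder of size $O(\lambda^{-N})$.

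Define $T_\lambda f(s) = \chi_\lambda(\sqLapo) f(\gamma(s))$. By duality, $\|T_\lambda\|^2 = \|T_\lambda T_\lambda^*\|_{\Lp{4/3}{\gamma}\to\Lp{4}{\gamma}}$, so it suffices to bound the kernel
\[
	H_\lambda(s,s') = \int_{M_0} K_\lambda(\gamma(s),y)\overline{K_\lambda(\gamma(s'),y)}\, dy \approx \lambda\int a\,\bar a\, e^{i\lambda\phi_{s,s'}(y)}\, dy
\]
with phase $\phi_{s,s'}(y) = d_0(\gamma(s),y) - d_0(\gamma(s'),y)$. The critical set of $\phi_{s,s'}$ in $y$ is the locus where the unit tangents of the geodesics from $\gamma(s)$ and from $\gamma(s')$ to $y$ coincide, namely the geodesic through $\gamma(s),\gamma(s')$ outside the segment joining them. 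Along this one-dimensional critical curve, $\phi_{s,s'}$ is constant equal to $\pm d_0(\gamma(s),\gamma(s'))$, and a computation in Fermi coordinates along the geodesic shows that the transverse Hessian of $\phi_{s,s'}$ has size $\sim|s-s'|$. Degenerate stationary phase therefore gives
\[
	|H_\lambda(s,s')| \lesssim \lambda^{1/2}\min\!\bigl(\lambda^{1/2},\ |s-s'|^{-1/2}\bigr),
\]
with the first regime ($|s-s'|\lesssim\lambda^{-1}$) coming from the trivial bound and the second from transverse $1$-D stationary phase.

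The diagonal piece of $H_\lambda$, of size $\lambda$ on a neighborhood of radius $\lambda^{-1}$, maps $\Lp{4/3}{\gamma}\to\Lp{4}{\gamma}$ with norm $\lambda\cdot(\lambda^{-1})^{1/2} = \lambda^{1/2}$ by Young's convolution inequality; the tail is a fractional integration of order $1/2$ scaled by $\lambda^{1/2}$, which maps $\Lp{4/3}{\gamma}\to\Lp{4}{\gamma}$ with norm $\lesssim\lambda^{1/2}$ by Hardy--Littlewood--Sobolev. Summing these contributions proves the theorem. The main obstacle is the degenerate stationary phase step: the critical set of $\phi_{s,s'}$ is a one-dimensional curve rather than an isolated point, so ordinary non-degenerate stationary phase does not apply and one must carefully track the transverse Hessian collapsing at rate $|s-s'|$ as $s\to s'$; this is exactly what yields the sharp $|s-s'|^{-1/2}$ kernel bound and hence the correct $\lambda^{1/2}$ in the $TT^*$ estimate.
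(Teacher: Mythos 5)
Your proposal is correct and matches the paper's overall strategy: the paper proves Theorem~\ref{bgtth} by quoting Burq--G\'erard--Tzvetkov for the $\chi_\lambda(\sqrt{-\Delta_0})$ piece and handling the negative-frequency piece $\chi_\lambda(-\sqrt{-\Delta_0})$ via Lemma~\ref{neglap}; your sketch reproduces the Burq--G\'erard--Tzvetkov argument (Hadamard parametrix to get a kernel $\lambda^{1/2}a(x,y)e^{i\lambda d_0(x,y)}$ plus negligible remainder, then $TT^*$ and Hardy--Littlewood--Sobolev on the diagonal kernel), which is exactly what the paper reproduces in Section~4 in the more general setting of the operators $I_\lambda(D_a)$.

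Two remarks. First, on the reduction from $\rho_\lambda$ to $\chi_\lambda$: the claim that $\chi_\lambda(-\sqrt{-\Delta_0})$ has operator norm $O(\lambda^{-N})$ is true on $L^2(M_0)$ by Schwartz decay of $\chi$ and positivity of $\sqrt{-\Delta_0}$, but an $L^2\to L^2$ bound does not by itself control $\|\chi_\lambda(-\sqrt{-\Delta_0})f\|_{L^4(\gamma)}$, since restriction to a curve is not continuous on $L^2(M_0)$. You should either upgrade to a Sobolev bound (the same spectral computation shows $(1-\Delta_0)^k\chi_\lambda(-\sqrt{-\Delta_0})$ has $L^2\to L^2$ norm $O(\lambda^{-N})$ for every $k$, so embed $H^2(M_0)\hookrightarrow C^0(M_0)$), or, as the paper does in Lemma~\ref{neglap}, show via the parametrix and integration by parts in $t$, exploiting that $\partial_t$ of the phase is $-p_0(y,\xi)-\lambda$, that the \emph{kernel} of $\chi_\lambda(-\sqrt{-\Delta_0})$ is uniformly bounded. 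Second, your $TT^*$ computation does a two-dimensional degenerate stationary phase with a one-dimensional critical manifold, whereas the paper (following BGT) slices the $y$-integral by the radial distance $\rho$ and reduces each slice to a nondegenerate one-dimensional oscillatory integral over $S^1$, then integrates over $\rho$ by Minkowski. Both routes yield the same kernel bound $\lambda(1+\lambda|s-s'|)^{-1/2}$; the slicing avoids setting up a degenerate stationary-phase lemma from scratch, while your version makes the geometric origin of the $|s-s'|^{-1/2}$ decay (the transverse Hessian collapsing along the common geodesic) more transparent.
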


	Burq-G\'erard-Tzvetkov proved this inequality for $\chi_\lambda$, and the inequality for $\rho_\lambda$ follows easily from the following lemma, which we will prove later.
	
	\begin{Lemma}
	\label{neglap}
		The kernel of $\chi_\lambda(-\sqrt{-\Delta_0})$ is uniformly bounded, independent of $\lambda$.
	\end{Lemma}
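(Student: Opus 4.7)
The plan is to expand the kernel of $\chi_\lambda(-\sqLapo)$ in the spectral basis of $-\Delta_0$ and exploit that $-\lambda_j^{(0)}-\lambda$ is large and negative at every eigenvalue, so that the Schwartz decay of $\chi$ crushes every term. Write $\{e_j^{(0)}\}$ for an orthonormal basis of $\LMo{2}$ consisting of eigenfunctions of $-\Delta_0$ with eigenvalues $(\lambda_j^{(0)})^2$, $\lambda_j^{(0)}\ge 0$. The spectral theorem then gives the kernel
\[
K_\lambda(x,y) = \sum_j \chi(-\lambda_j^{(0)}-\lambda)\, e_j^{(0)}(x)\, e_j^{(0)}(y),
\]
and since $\chi\in\mathcal{S}(\R)$ with $\lambda_j^{(0)}+\lambda\ge\lambda\ge 1$, for every $N$
\[
|\chi(-\lambda_j^{(0)}-\lambda)|\le C_N(1+\lambda+\lambda_j^{(0)})^{-N}.
\]

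Group the eigenvalues into unit bands $I_k=[k,k+1)$ with $k\in\mathbb{Z}_{\ge 0}$ and apply Cauchy--Schwarz bandwise to obtain
\[
|K_\lambda(x,y)| \le \sum_k C_N(1+\lambda+k)^{-N} \Big(\sum_{\lambda_j^{(0)}\in I_k}|e_j^{(0)}(x)|^2\Big)^{1/2}\Big(\sum_{\lambda_j^{(0)}\in I_k}|e_j^{(0)}(y)|^2\Big)^{1/2}.
\]
The pointwise local Weyl law on the closed $2$-manifold $M_0$ bounds each spectral cluster sum by $C(1+k)$ uniformly in $x$, so for any $N\ge 3$,
\[
|K_\lambda(x,y)| \le C_N \sum_{k\ge 0}(1+\lambda+k)^{-N}(1+k) \le C
\]
uniformly in $\lambda\ge 1$ and $x,y\in M_0$, which is the claim.

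No genuine obstacle arises: the Schwartz decay of $\chi$ dominates the polynomial cluster growth, and the diagonal local Weyl bound is classical on a closed manifold. If one prefers an argument in the spirit of the rest of the paper, one can instead invoke the wave-equation representation $K_\lambda(x,y) = \frac{1}{2\pi}\int \hat\chi(t)\, e^{-it\lambda}\, U(t,x,y)\,dt$ with $U(t,\cdot,\cdot)$ the kernel of $e^{-it\sqLapo}$, and integrate by parts in $t$ against the Hadamard parametrix: since the parametrix phase satisfies $\partial_t\phi\sim -|\xi|_g$, the full phase $\phi-t\lambda$ is non-stationary in $t$ with $|\partial_t(\phi-t\lambda)|\gtrsim |\xi|_g+\lambda\gtrsim\lambda$, yielding arbitrary polynomial decay in $\lambda$ and hence the same conclusion.
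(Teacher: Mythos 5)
Your main argument is correct, and it is genuinely different from the paper's proof. The paper applies the small-time parametrix for $e^{-it\sqLapo}$ (Theorem~4.1.2 of \cite{SogBook}), writes the kernel as an oscillatory integral in $(t,\xi)$ with phase $\varphi_0(x,y,\xi)-tp_0(y,\xi)-t\lambda$, observes that $|\partial_t(\cdots)|=|p_0(y,\xi)+\lambda|\gtrsim 1+|\xi|$, and integrates by parts in $t$ to gain $(1+|\xi|)^{-N}$ decay, which is integrable in $\xi$ for $N\ge 3$. This is exactly your ``alternative'' paragraph, and it requires $\delta$ small so that $\hat\chi$ is supported where the parametrix is valid. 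Your primary argument instead expands the kernel spectrally, uses the Schwartz decay $|\chi(-\lambda_j^{(0)}-\lambda)|\le C_N(1+\lambda+\lambda_j^{(0)})^{-N}$, applies Cauchy--Schwarz over unit spectral bands, and invokes the pointwise local Weyl law $\sum_{\lambda_j^{(0)}\in[k,k+1)}|e_j^{(0)}(x)|^2\le C(1+k)$ on the closed surface $M_0$; summing with $N\ge 3$ gives a bound uniform in $\lambda$ and $(x,y)$. Your route is more elementary (no FIO machinery, only the classical H\"ormander spectral cluster bound) and works for any Schwartz $\chi$ without the small-$\delta$ restriction; the paper's route has the virtue of reusing the oscillatory-integral setup that drives the rest of the section. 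Both are complete; a small cosmetic point is that the kernel should be $\sum_j\chi(-\lambda_j^{(0)}-\lambda)e_j^{(0)}(x)\overline{e_j^{(0)}(y)}$, which is harmless since one may take the basis real.
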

		
	Let $\Pi_0$ be the set of all unit length geodesics in $M_0$.
	Fix $r \in (0,1)$.
	For $\gamma \in \Pi_0$, define the neighborhoods
	\[
			\mathcal{T}_\lambda(\gamma) = \Big\{x \in M_0: d_0(x,\gamma) < r\lambda^{-1/2} \Big\}
		\]
	There is a constant $\Lambda$ such that for any geodesic $\gamma \in \Pi_0$, there exists a fixed finite number of broken geodesics $\gamma_i \in \Pi$
		such that $\mathcal{T}_{\lambda_j}(\gamma) \cap M \subset \bigcup \mathcal{N}_j(\gamma_i)$ for $\lambda_j \ge \Lambda$.
	By \eqref{sogest}, we know $\| e_j \|_{L^4(M)} \lesssim \lambda_j^{1/8}$, so to prove Theorem ~\ref{thlink} it suffices to show that
	\begin{multline}
	\label{propineq}
			\int_{E_\delta} | \rho _\lambda(\sqrt{-\Delta_g}) f(x)|^2 |g(x)|^2 \,dx + \int_{H_\delta} | \chi _\lambda(\sqrt{-\Delta_g}) f(x)|^2 |g(x)|^2 \,dx \le
			\\
			C_\varepsilon \lambda^{1/2} \| f \|_{L^2(M)}^2 \sup_{\gamma \in \Pi_0} \| g \|_{L^2(\mathcal{T}_{\lambda}(\gamma))}^2
			+~\varepsilon \lambda^{1/4} \| f \|_{L^2(M)}^2 \| g \|_{L^4(M)}^2
			+~C \| f \|_{L^2(M)}^2 \| g \|_{L^2(M)}^2			
	\end{multline}
	We have the following analogue.
	\begin{Theorem}
		Fix $\varepsilon>0$.
		There is a constant $C_\varepsilon$ such that
		\label{soggeth}
		\begin{multline*}
			\int_{M_0} | \rho _\lambda(\sqrt{-\Delta_0}) f(x)|^2 |g(x)|^2 \,dx \le + \int_{M_0} | \chi _\lambda(\sqrt{-\Delta_0}) f(x)|^2 |g(x)|^2 \,dx \le
			\\
			C_\varepsilon \lambda^{1/2} \| f \|_{L^2(M_0)}^2 \sup_{\gamma \in \Pi_0} \| g \|_{L^2(\mathcal{T}_{\lambda}(\gamma))}^2
			+~\varepsilon \lambda^{1/4} \| f \|_{L^2(M_0)}^2 \| g \|_{L^4(M_0)}^2
			+~C \| f \|_{L^2(M_0)}^2 \| g \|_{L^2(M_0)}^2			
		\end{multline*}			
	\end{Theorem}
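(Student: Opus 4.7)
The plan is to follow Sogge's strategy on the closed manifold $(M_0,g)$: reduce to a single spectral cluster, represent it by a short-time Hadamard parametrix for the half-wave group, decompose $f$ into wave packets adapted to $\lambda^{-1/2}$-tubes around geodesics, and then treat diagonal and off-diagonal contributions separately.

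First I would reduce to $\chi_\lambda(\sqLapo)$ alone. By Lemma~\ref{neglap} the kernel of $\chi_\lambda(-\sqLapo)$ is uniformly bounded, so this operator is $O(1)$ on $L^2$ and its contribution to both displays is absorbed, via Cauchy--Schwarz, into the tail term $C\|f\|_\LMo{2}^2\|g\|_\LMo{2}^2$. Writing
\[
\chi_\lambda(\sqLapo) = \frac{1}{2\pi}\int \hat\chi(t)\,e^{-it\lambda}\,e^{it\sqLapo}\,dt,
\]
and using that $\hat\chi$ is supported in $(\delta/2,\delta)$ with $\delta$ small, the Hadamard parametrix represents this as $\lambda^{1/2}$ times an oscillatory integral with phase $d_0(x,y)$ and amplitude cut off to $d_0(x,y)\sim\delta$, modulo a smoothing remainder that again lands in the $C\|f\|_\LMo{2}^2\|g\|_\LMo{2}^2$ term.

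Next comes the tube decomposition. Choose a maximal $\lambda^{-1/2}$-separated family of covectors $\xi_\nu\in S^*M_0$, let $\gamma_\nu$ be the unit geodesic through $\xi_\nu$, and split $f=\sum_\nu f_\nu$ using pseudodifferential cutoffs $\psi_\nu$ supported in $\lambda^{-1/2}$-boxes transverse to $\xi_\nu$. A Cotlar--Stein argument gives $\sum_\nu \|f_\nu\|_\LMo{2}^2\leC\|f\|_\LMo{2}^2$, and propagation of singularities through the parametrix confirms that $\chi_\lambda(\sqLapo)f_\nu$ is essentially a wave packet supported in $\cal{T}_\lambda(\gamma_\nu)$ with $\|\chi_\lambda(\sqLapo)f_\nu\|_{L^\infty(M_0)}\leC\lambda^{1/2}\|f_\nu\|_\LMo{2}$. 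The diagonal contribution is then controlled by
\[
\sum_\nu \|\chi_\lambda(\sqLapo)f_\nu\|_{L^\infty}^2\,\|g\|_\Lp{2}{\cal{T}_\lambda(\gamma_\nu)}^2 \leC \lambda^{1/2}\|f\|_\LMo{2}^2\sup_{\gamma\in\Pi_0}\|g\|_\Lp{2}{\cal{T}_\lambda(\gamma)}^2,
\]
producing the main term.

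The hard part will be the off-diagonal sum $\sum_{\nu\ne\nu'}\int (\chi_\lambda(\sqLapo)f_\nu)\,\overline{\chi_\lambda(\sqLapo)f_{\nu'}}\,|g|^2$. For a pair of tubes meeting at angle $\theta_{\nu\nu'}$ the product wave packet is essentially supported in a lens of area $\sim\lambda^{-1}/\sin\theta_{\nu\nu'}$ and oscillates at frequency $\lambda\sin\theta_{\nu\nu'}$ in the bisector direction. A bilinear oscillatory-integral estimate---an FIO variant of the Tao--Wolff bilinear restriction bound exploiting the cinematic curvature of the geodesic flow on $(M_0,g)$---controls the contribution of each dyadic angular shell $\theta\sim 2^{-k}$ by $2^{-ck}\lambda^{1/4}\|f\|_\LMo{2}^2\|g\|_\LMo{4}^2$. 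Summing shells with $\theta\geq\eps^{C}$ yields the $\eps\lambda^{1/4}\|f\|_\LMo{2}^2\|g\|_\LMo{4}^2$ term, and the remaining near-parallel pairs are folded into a coarsened diagonal at scale $\eps^{-1}\lambda^{-1/2}$, where they are absorbed either into the main term or into $C_\eps\|f\|_\LMo{2}^2\|g\|_\LMo{2}^2$. This bilinear oscillatory integral estimate, together with the delicate balance between angular separation and the parameter $\eps$, is the main obstacle; everything else is bookkeeping once the parametrix and tube decomposition are set up.
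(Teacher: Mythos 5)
The paper disposes of Theorem~\ref{soggeth} in one line: for $\chi_\lambda(\sqLapo)$ the inequality (with $r=1$, and by the same proof for $r<1$) is exactly the main result of Sogge~\cite{Sog}, and the $\rho_\lambda(\sqLapo)$ version then follows from Lemma~\ref{neglap} since $\chi_\lambda(-\sqLapo)$ has a uniformly bounded kernel, whose contribution is absorbed into the $C\|f\|_\LMo{2}^2\|g\|_\LMo{2}^2$ term. You instead propose to reprove Sogge's theorem from scratch; the opening reduction to $\chi_\lambda$ and the Hadamard-parametrix representation with phase $d_0(x,y)$ are correct and are exactly where Sogge starts, but the rest of your argument takes a genuinely different route from his and has two gaps.

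The first gap is the sup-norm bound for a single wave packet. You write $\|\chi_\lambda(\sqLapo)f_\nu\|_{L^\infty}\leC\lambda^{1/2}\|f_\nu\|_\LMo{2}$; this is the correct bound for a generic $L^2$ function, but for a wave packet filling a $\lambda^{-1/2}\times 1$ tube of area $\sim\lambda^{-1/2}$ the sharp bound is $\lambda^{1/4}\|f_\nu\|_\LMo{2}$, and that is the bound you actually need: with $\lambda^{1/2}$ the diagonal sum gives $\lambda\|f\|_\LMo{2}^2\sup_\gamma\|g\|_\Lp{2}{\cal{T}_\lambda(\gamma)}^2$, which overshoots the target $\lambda^{1/2}$ by $\lambda^{1/2}$. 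This is fixable, but as written the step is inconsistent.

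The second gap is more serious. You assert that each dyadic angular shell $\theta\sim 2^{-k}$ contributes $2^{-ck}\lambda^{1/4}\|f\|_\LMo{2}^2\|g\|_\LMo{4}^2$ and that summing over $\theta\ge\eps^C$ produces the factor $\eps$. It does not: $\sum_{k\le C\log(1/\eps)}2^{-ck}=O(1)$, not $O(\eps)$. To produce the $\eps$ you would need the per-shell gain to be $\leC\eps$ for \emph{every} shell you keep, which your bilinear claim does not give; moreover, bilinear transversality gains typically \emph{degrade} as the angle shrinks, so the sign of the exponent is suspect as well. You would need to state and verify a precise bilinear estimate that actually sums, and that is the unresolved core of your proposal. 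By contrast, Sogge's argument — reproduced in Section~3 of the paper for the analogous operator $T_b$ — avoids bilinear restriction entirely: it decomposes the integration variable $y$ in Fermi coordinates into $\lambda^{-1/2}$-thick slabs, splits the resulting double sum into near-diagonal ($|j-k|\le N$) and far-diagonal ($|j-k|>N$) pieces, controls the far-diagonal piece by a clean $L^2\to L^2$ oscillatory integral estimate (Lemma~\ref{Lcsj}) using the Carleson--Sj\"olin condition on $d_0$, where the factor $N^{-1/2}$ (hence $\eps$ after choosing $N=N(\eps)$) comes out of a Schur test, and controls the near-diagonal piece by an angular decomposition of the $x$-variable together with Gauss's lemma (Lemma~\ref{Lgauss}). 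That route buys you elementary oscillatory-integral technology and a transparent $\eps$ dependence; your wave-packet route would buy sharper structural information if the bilinear estimate were nailed down, but it currently is not, and in any case re-deriving the theorem is unnecessary here since the paper needs only the citation.
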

	For $r=1$, Sogge ~\cite{Sog} proved this inequality for $\chi_\lambda$.
	Moreover, the same proof shows this holds for smaller values of $r$ as well, and the inequality for $\rho_\lambda$ follows easily from Lemma \ref{neglap}.
	
	Define projection operators $\Pi_j$ on $L^2(M)$ by $\Pi_j f=\langle f , e_j \rangle e_j$.
	For $f$ in $L^2(M)$,
	\begin{multline}
		\label{wave}
			\chi _\lambda(\sqrt{-\Delta_g}) f
			=\sum_{j=0}^\infty \chi_\lambda(\lambda_j)  \Pi_j f
			=(2\pi)^{-1} \int \hat{\chi}(t) e^{-it\lambda} \sum_{j=0}^\infty e^{it\lambda_j} \Pi_j f \, dt
			\\
			=(2\pi)^{-1} \int \hat{\chi}(t) e^{-it\lambda} e^{it\sqrt{-\Delta_g}}f \,dt
		\end{multline}
	Likewise,
	\[
		\chi _\lambda(-\sqrt{-\Delta_g}) f = (2\pi)^{-1} \int \hat{\chi}(t) e^{-it\lambda} e^{-it\sqrt{-\Delta_g}}f \,dt
	\]
	which yields
	\[
		\rho _\lambda(\sqrt{-\Delta_g}) f = \pi^{-1} \int \hat{\chi}(t) e^{-it\lambda} \cos(t\sqrt{-\Delta_g})f \,dt
	\]

	Similarly, for $f$ in $L^2(M_0)$,
	\begin{equation}
		\label{freewave}
			\chi _\lambda(\sqrt{-\Delta_0}) f=(2\pi)^{-1} \int \hat{\chi}(t) e^{-it\lambda} e^{it\sqrt{-\Delta_0}}f \,dt
		\end{equation}
	and
	\[
		\rho _\lambda(\sqrt{-\Delta_0}) f = \pi^{-1} \int \hat{\chi}(t) e^{-it\lambda} \cos(t\sqrt{-\Delta_0})f \,dt
	\]
	If $t$ is in supp $\hat{\chi}$, then
	\begin{equation*}
		\Big{(}\cos(t\sqrt{-\Delta_g})f \Big{)}\Big{\vert}_{E_\delta} = \Big{(}\cos(t\sqrt{-\Delta_0})f \Big{)}\Big{\vert}_{E_\delta}
	\end{equation*}
	which implies that
	\begin{equation}
	\label{tycos}
		\Big{(}\rho_\lambda(\sqrt{-\Delta_g})f \Big{)}\Big{\vert}_{E_\delta} = \Big{(}\rho_ \lambda(\sqrt{-\Delta_0})f \Big{)}\Big{\vert}_{E_\delta}
	\end{equation}
	For a broken geodesic $\gamma$ on $M$ of unit length, Theorem ~\ref{bgtth} yields
	\[
			\| \rho_\lambda(\sqrt{-\Delta_g}) f \|_{L^4(\gamma \cap E_\delta)} \lesssim \lambda^{1/4} \| f \|_{L^2(M)}
		\]		
	So to prove \eqref{specgeorestineq}, it remains to prove
	\begin{equation}
	\label{hdeltageo}
			\| \chi_\lambda(\sqrt{-\Delta_g}) f \|_{L^4(\gamma \cap H_\delta)} \lesssim \lambda^{1/4} \| f \|_{L^2(M)}
		\end{equation}
	Similarly, Theorem ~\ref{soggeth} yields
	\begin{multline*}
			\int_{E_\delta} | \rho _\lambda(\sqrt{-\Delta_g}) f(x)|^2 |g(x)|^2 \,dx \le
			C_\varepsilon \lambda^{1/2} \| f \|_{L^2(M)}^2 \sup_{\gamma \in \Pi_0} \| g \|_{L^2(\mathcal{T}_{\lambda}(\gamma))}^2
		\\
			+~\varepsilon \lambda^{1/4} \| f \|_{L^2(M)}^2 \| g \|_{L^4(M)}^2
			+~C \| f \|_{L^2(M)}^2 \| g \|_{L^2(M)}^2
		\end{multline*}	
	So to prove \eqref{propineq}, it remains to prove
	\begin{multline}
		\label{hdelta}
			\int_{H_\delta} | \chi _\lambda(\sqrt{-\Delta_g}) f(x)|^2 |g(x)|^2 \,dx \le
			C_\varepsilon \lambda^{1/2} \| f \|_{L^2(M)}^2 \sup_{\gamma \in \Pi_0} \| g \|_{L^2(\mathcal{T}_{\lambda}(\gamma))}^2
		\\
			+~\varepsilon \lambda^{1/4} \| f \|_{L^2(M)}^2 \| g \|_{L^4(M)}^2
			+~C \| f \|_{L^2(M)}^2 \| g \|_{L^2(M)}^2			
		\end{multline}
	It is equivalent to show \eqref{hdeltageo} and \eqref{hdelta} with $\chi _\lambda(\sqrt{-\Delta_g}) e^{it_0 \sqrt{-\Delta_g}} f$
		in place of $\chi _\lambda(\sqrt{-\Delta_g}) f$ for some fixed $t_0$, because
	\begin{equation*}
		\|e^{-it_0 \sqrt{-\Delta_g}} f \|_{L^2(M)}=\| f \|_{L^2(M)}
	\end{equation*}
	Adapting \eqref{wave} gives
	\[
	\label{wave'}
		\chi_\lambda(\sqrt{-\Delta_g}) e^{it_0 \sqrt{-\Delta_g}} f = (2\pi)^{-1} \int \hat{\chi}(t) e^{-it\lambda} e^{i(t+t_0)\sqrt{-\Delta_g}}f \,dt
	\]
	
	Before proceeding, we prove Lemma \ref{neglap}.
	
	\begin{proof}[Proof of Lemma \ref{neglap}]
	If $\delta$ is small, we can apply a parametrix as follows.
	See, for example, Theorem ~4.1.2 in Sogge ~\cite{SogBook}.
	In appropriately chosen coordinate charts, the operator $\chi_\lambda(-\sqrt{-\Delta_0})$ is equal, modulo smoothing operators, to an operator with kernel
	\[
			\iint \hat{\chi}(t) e^{i[\varphi_0(x,y,\xi)-t p_0(y,\xi)-t\lambda]}q(t,x,y,\xi)\,dt d\xi
		\]
	Here $\varphi_0$ is smooth, $p_0$ is the principal symbol of $\sqrt{-\Delta_0}$, and $q$ is a symbol of type $(1, 0)$ and order zero.
	Since $p_0(y,\xi) \sim |\xi|$ and $\lambda \ge 1$,
	\[
			\Big| \frac{\partial}{\partial t} \Big( \varphi_0(x,y,\xi)-tp_0(y,\xi)-t\lambda \Big)\Big| = |p_0(y,\xi)+\lambda| \gtrsim 1+ |\xi|
		\]
	An integration by parts argument shows that for any positive integer $N$,
	\[
			\Big| \int \hat{\chi}(t) e^{i[\varphi_0(x,y,\xi)-t p_0(y,\xi)-t\lambda]}q(t,x,y,\xi)\,dt \Big| \le C_N (1+|\xi|)^{-N}
		\]
	So the kernel of $\chi_\lambda(-\sqrt{-\Delta_0})$ is uniformly bounded, independent of $\lambda$.
	\end{proof}
	
	We reduce the problem by following Smith-Sogge \cite{SSog}.
	For an operator $A$ from $M_0$ to $\mathbb{R} \times M_0$, define associated operators $I_\lambda(A)$ by
	\begin{equation*}
		I_\lambda(A)f(x) = \int \hat{\chi}(t) e^{-it\lambda} Af(t,x) \,dt
	\end{equation*}
	Here we can identify operators from $M$ to $\mathbb{R} \times M$ with operators from $M_0$ to $\mathbb{R}\times M_0$ whose kernels are supported in $M \times (\mathbb{R} \times M)$.
	Let $E_g$ be the operator given by
	\[
			E_g f(t,x)=\Big(e^{i(t+t_0)\sqrt{-\Delta_g}}f \Big)(x)
		\]
	Then we have
	\[
			I_\lambda(E_g)= 2\pi \, \chi_\lambda(\sqrt{-\Delta_g}) \circ e^{it_0 \sqrt{-\Delta_g}}
		\]
	We can rewrite \eqref{hdeltageo} and \eqref{hdelta} as
	\[
			\| I_\lambda(E_g) f \|_{L^4(\gamma \cap H_\delta)} \lesssim \lambda^{1/4} \| f \|_{L^2(M_0)}
		\]
	and
	\begin{multline*}
			\int_{H_\delta} | I_\lambda(E_g) f(x)|^2 |g(x)|^2 \,dx \le
			C_\varepsilon \lambda^{1/2} \| f \|_{L^2(M_0)}^2 \sup_{\gamma \in \Pi_0} \| g \|_{L^2(\mathcal{T}_{\lambda}(\gamma))}^2
		\\
			+~\varepsilon \lambda^{1/4} \| f \|_{L^2(M_0)}^2 \| g \|_{L^4(M)}^2
			+~C \| f \|_{L^2(M_0)}^2 \| g \|_{L^2(M)}^2
		\end{multline*}
	
	It suffices to write $E_g$ as a finite sum of operators, where for each operator $A$ in the sum, $I_\lambda(A)$ satisfies
	\begin{equation}
	\label{geoineq'}
			\| I_\lambda(A) f \|_{L^4(\gamma \cap H_\delta)} \lesssim \lambda^{1/4} \| f \|_{L^2(M_0)}
		\end{equation}
	and
	\begin{multline}
		\label{propineq'}
			\int_{H_\delta} | I_\lambda(A) f(x)|^2 |g(x)|^2 \,dx \le
			C_\varepsilon \lambda^{1/2} \| f \|_{L^2(M_0)}^2 \sup_{\gamma \in \Pi_0} \| g \|_{L^2(\mathcal{T}_{\lambda}(\gamma))}^2
		\\
			+~\varepsilon \lambda^{1/4} \| f \|_{L^2(M_0)}^2 \| g \|_{L^4(M)}^2
			+~C \| f \|_{L^2(M_0)}^2 \| g \|_{L^2(M)}^2
		\end{multline}
	If an operator $A$ has a kernel $K(t,x,y)$ which is uniformly bounded over the region
	\[
			\Big\{(t,x,y): t \in \text{supp } \hat{\chi}, x \in H_\delta, y \in M_0 \Big\}
		\]
	then the kernel of $I_\lambda(A)$ is uniformly bounded, independent of $\lambda$.
	In this case the estimates \eqref{geoineq'} and \eqref{propineq'} are trivial.
	In particular, this applies when $A$ is smoothing.
	
	Since $\partial M$ is strictly geodesically concave, there is a $c_0>0$ such that if $t_0 > 0$ is small then any unit speed broken geodesic $\gamma$ with $d(\gamma(0), \partial M) \le c_0 t_0^2$ must satisfy
	\begin{equation*}
		d(\gamma (t), \partial M) \ge c_0 t_0^2
	\end{equation*}
	for $\frac{1}{2}t_0 \le t \le 4 t_0$.
	Now define $\Omega$ to be the set of points $y$ in $M$ such that
	there is a unit speed broken geodesic $\gamma$ with $\gamma(0)=y$ and $d(\gamma(t_0+t), \partial M) \le 2\delta$ for some $t \in [-\delta, \delta]$.
	We assume that $2\delta < c_0 t_0^2$ and $\delta < \frac{1}{2}t_0$, which implies $d(\omega, \partial M) \ge c_0 t_0^2$.
	
	If the kernel of $E_g$ has a singularity at $(t, x, y)$ then there is a broken geodesic of length $t+t_0$ with endpoints at $x$ and $y$.
	So there is a smooth function $\alpha$ with support in $\Omega$ such that the kernel of the operator
	\begin{equation*}
		f \to E_g (1-\alpha) f
	\end{equation*}
	is smooth over the region $\{(t,x,y): t \in$ supp $\hat{\chi}, x \in H_\delta, y \in M_0 \}$.
	This reduces the problem to only considering $f$ with support in $\Omega$.
	
	Define an operator $E_0$ from $M_0$ to $\mathbb{R} \times M_0$ by
	\[
			E_0 f(t,x)=\Big(e^{i(t+t_0)\sqrt{-\Delta_0}}f \Big)(x)
		\]
	Let $\mathcal{R}$ be an operator from $M_0$ to $\mathbb{R} \times \partial M$ given by
	\begin{equation*}
		\mathcal{R}f=(E_0 f )\big{\vert}_{\mathbb{R} \times \partial M}
	\end{equation*}
	Let $\square _g = \partial_t^2 - \Delta _g$ and $\square _0 =\partial_t^2 - \Delta _0$.
	Let $W$ be the forward solution operator of the Dirichlet problem for $\square_g$, 
		mapping data on $\mathbb{R} \times \partial M$ which vanish for $t \le -t_0$ to functions on $\mathbb{R} \times M$.
	That is, the equation $u=Wh$ means $u$ solves
	\[
		\left\{ 
		\begin{array}{l l}
			\square _g u & = 0\\
			u & =0 \quad \text{for } t \le -t_0\\
			u \vert _{\mathbb{R} \times \partial M} & =h\\
		\end{array}
		\right.
	\]
	Recall we are assuming $\delta < \frac{1}{2}t_0$.
	Now over $[\frac{1}{2}\delta, \delta] \times M$, for $f$ supported in $\Omega$,
	\begin{equation*}
		E_g f=E_0 f-W\mathcal{R}_+ f
	\end{equation*}
	where $\mathcal{R}_+$ is $\mathcal{R}$ smoothly cutoff to $t$ in $[-t_0, t_0]$.
	
	We can break up the cotangent bundle of $\mathbb{R} \times \partial M$ into three time-independent conic regions.
	These are the elliptic and hyperbolic regions where the Dirichlet problem is elliptic and hyperbolic, respectively, and the glancing region which is the region between them.
	We can break up the identity operator into a sum of time-independent conic pseudodifferential cutoffs as
	\begin{equation*}
		I=\Pi_e+\Pi_h+\Pi_g
	\end{equation*}
	where $\Pi_e$ and $\Pi_h$ are essentially supported strictly inside the elliptic and hyperbolic regions, respectively,
		and $\Pi_g$ is essentially supported in a small conic set about the glancing region.
	Then over $[\frac{1}{2}\delta, \delta] \times M$,
	\begin{equation*}
		E_g f=E_0 f-W\Pi_e\mathcal{R}_+ f-W\Pi_h\mathcal{R}_+ f-W\Pi_g\mathcal{R}_+ f
	\end{equation*}
	
	The operator $I_\lambda(E_0)$ is equal to $2 \pi \, \chi_\lambda(\sqrt{-\Delta_0}) \circ e^{it_0 \sqrt{-\Delta_0}}$,
		so it satisfies \eqref{geoineq'} and \eqref{propineq'} by Theorems ~\ref{bgtth} and \ref{soggeth}.

	The projection of any characteristic direction of $\square_g$ onto $T^*(\mathbb{R}\times \partial M)$ is contained in the hyperbolic or glancing regions, so $W\Pi_e\mathcal{R}_+$ is smoothing.
	This implies that $I_\lambda(W\Pi_e\mathcal{R}_+)$ satisfies \eqref{geoineq'} and \eqref{propineq'}.
	
	On the essential support of $\Pi_h$, we can solve the forward Dirichlet problem for $\square_g$ locally, modulo smoothing operators,
		on an open set in $\mathbb{R} \times M_0$ around $\mathbb{R} \times \partial M$.
	This gives a positive constant $t_1$ and an operator $\tilde{W}$ from $\mathbb{R} \times \partial M$ to $\mathbb{R} \times M_0$ such that for any $v$ supported by $t$ in $[-t_1, t_1]$,
		we have that $\square_0 \tilde{W}v$ is smooth over $[-2t_1, 2t_1] \times M_0$ and $(W-\tilde{W}) \Pi_h v$ is smooth over $\mathbb{R} \times M$.
	
	We can assume $t_0 \le t_1$ and define operators $J_1$ and $J_2$ by
	\[
			J_1 f = \Big( \tilde{W} \Pi_h \mathcal{R}_+ f \Big) \Big\vert _{t=-t_0}
		\]
	\[
			J_2 f = (-\Delta_0)^{-1/2} \bigg(\Big(\partial_t \tilde{W} \Pi_h \mathcal{R}_+ f \Big) \Big\vert _{t=-t_0} \bigg)
		\]
	These are Fourier integral operators of order zero associated to the relation of reflection about $\del M$.
	
	Define operators $C_0$ and $S_0$ from $M_0$ to $\mathbb{R} \times M_0$ by
	\[
			C_0 f(t,x)=\Big(\cos\big((t+t_0)\sqrt{-\Delta_0}\big)f\Big)(x)
		\]
	and
	\[
			S_0 f(t,x)=\Big(\sin\big((t+t_0)\sqrt{-\Delta_0}\big)f\Big)(x)
		\]
	We can write $W\Pi_h\mathcal{R}_+ f$, modulo smoothing operators, as $C_0 J_1 f + S_0 J_2 f$.
	By the $L^2$ continuity of $J_1$ and $J_2$, it remains to show that $I_\lambda(C_0)$ and $I_\lambda(S_0)$ satisfy \eqref{geoineq'} and \eqref{propineq'}.
	Define an operator $\tilde{E}_0$ from $M_0$ to $\mathbb{R} \times M_0$ by
	\[
			\tilde{E}_0 f(t,x)=\Big(e^{-i(t+t_0)\sqrt{-\Delta_0}}f \Big)(x)
		\]
	Since $I_\lambda(E_0)$ satisfies \eqref{geoineq'} and \eqref{propineq'}, it suffices to show that the same is true for $I_\lambda(\tilde{E}_0)$.
	This follows from Lemma \ref{neglap}, completing the argument for the term $W\Pi_h\mathcal{R}_+ f$.
	
	Now we break up $\Pi_g$ into a finite sum of pseudodifferential cutoffs, each essentially supported in a suitably small conic neighborhood of a glancing ray.
	This breaks up $W\Pi_g\mathcal{R}_+f$ into a finite sum and the Melrose-Taylor parametrix ~\cite{MTBook} can be applied to each term.
	We will use coordinates for $M_0$, chosen so that $M$ is given by $x_2 > 0$.
	Then each term in this sum can be written, modulo smoothing operators, in the form $GKf$, where $K$ is a Fourier integral operator of order zero, compactly supported on both sides,
		and $G$ is an operator from $\mathbb{R}^2$ to $\mathbb{R}^3$ with kernel
	\[
			\int e^{i\theta(x,\xi)+it\xi_1-iy\cdot \xi} \Big( A_+ \big( \zeta(x,\xi) \big) a(x,\xi) + A_+' \big( \zeta(x,\xi) \big) b(x,\xi) \Big) \frac{Ai}{A_+}\big( \zeta_0(\xi) \big) \,d\xi
		\]
	The functions $a$ and $b$ are symbols of type $(1,0)$ and order $1/6$ and $-1/6$, respectively,
		and both are supported by $x$ in a small ball about the origin and by $\xi$ is in a small conic neighborhood of the $\xi_1$-axis.
	Also $Ai$ is the Airy function, and $A_+$ is given by $A_+(z)=Ai(e^{-\frac{2}{3}\pi i} z)$.
	The function $\zeta_0$ is defined by $\zeta_0(\xi)=-\xi_1^{-1/3}\xi_2$, and the phases $\theta$ and $\zeta$ are real, smooth, and homogeneous in $\xi$ of degree 1 and $2/3$, respectively, with
	\begin{equation}
		\label{bczeta}
			\zeta\big((x_1,0),\xi\big)=\zeta_0(\xi) \quad \text{and} \quad \frac{\partial\zeta}{\partial x_2}\big((x_1,0),\xi\big) < 0
		\end{equation}
	Let $\langle \; , \; \rangle_x$ be the inner product given by $g$.
	In the region $\zeta(x,\xi) \le 0$, the functions $\theta$ and $\zeta$ satisfy
	\begin{equation}
	\label{eikonal0}
		\left\{ 
		\begin{array}{l l}
			\xi_1^2 - \langle d_x\theta, d_x\theta \rangle_x + \zeta \langle d_x\zeta, d_x\zeta \rangle_x =0 \\
			\langle d_x\theta, d_x\zeta \rangle_x =0
		\end{array}
		\right.
	\end{equation}
	Also, $\theta$ and $\zeta$ satisfy these equations to infinite order at $x_2=0$ in the region $\zeta(x,\xi) > 0$.
	
	Fix a small $r>0$ and define the set
	\[
			S_r= \Big\{ x \in \mathbb{R}^2: |x| \le r, x_2 \ge 0 \Big\}
		\]
	We identify $S_r$ with a subset of $M$.
	For an operator $A$ from $\mathbb{R}^2$ to $\mathbb{R}^3$, define associated operators $I_\lambda(A)$ by
	\begin{equation*}
		I_\lambda(A)f(x) = \int  \hat{\chi}(t) e^{-it\lambda} Af(t,x) \,dt
	\end{equation*}
	By the $L^2$ continuity of $K$ it suffices to show that $I_\lambda(G)$ has the following properties.
	For a broken geodesic $\gamma$ in $S_r$ of unit length and for $f$ with fixed compact support, we need to show that
	\begin{equation*}
			\| I_\lambda(G) f \|_{L^4(\gamma)} \lesssim \lambda^{1/4} \| f \|_{L^2(\mathbb{R}^2)}
		\end{equation*}
	We also need to show that for any $\varepsilon>0$, there is a constant $C_\varepsilon$ such that for $f$ with fixed compact support,
	\begin{multline*}
			\int_{S_r} | I_\lambda(G) f(x)|^2 |g(x)|^2 \,dx \le
			C_\varepsilon \lambda^{1/2} \| f \|_{L^2(\mathbb{R}^2)}^2 \sup_{\gamma \in \Pi_0} \| g \|_{L^2(\mathcal{T}_{\lambda}(\gamma))}^2
		\\
			+~\varepsilon \lambda^{1/4} \| f \|_{L^2(\mathbb{R}^2)}^2 \| g \|_{L^4(\mathbb{R}^2)}^2
			+~C \| f \|_{L^2(\mathbb{R}^2)}^2 \| g \|_{L^2(\mathbb{R}^2)}^2
		\end{multline*}
	It suffices to write $G$ as a finite sum of operators, where for each operator $A$ in the sum and for $f$ with fixed compact support, $I_\lambda(A)$ satisfies
	\begin{equation}
	\label{coord1}
			\| I_\lambda(A) f \|_{L^4(\gamma)} \lesssim \lambda^{1/4} \| f \|_{L^2(\mathbb{R}^2)}
		\end{equation}
	and
	\begin{multline}
	\label{coord2}
			\int_{S_r} | I_\lambda(A) f(x)|^2 |g(x)|^2 \,dx \le
			C_\varepsilon \lambda^{1/2} \| f \|_{L^2(\mathbb{R}^2)}^2 \sup_{\gamma \in \Pi_0} \| g \|_{L^2(\mathcal{T}_{\lambda}(\gamma))}^2
		\\
			+~\varepsilon \lambda^{1/4} \| f \|_{L^2(\mathbb{R}^2)}^2 \| g \|_{L^4(\mathbb{R}^2)}^2
			+~C \| f \|_{L^2(\mathbb{R}^2)}^2 \| g \|_{L^2(\mathbb{R}^2)}^2
		\end{multline}
	If an operator $A$ has a kernel $K(t,x,y)$ which is uniformly bounded over compact subsets of
	\[
			\Big\{(t,x,y): t \in \text{supp } \hat{\chi}, x \in S_r, y \in \mathbb{R}^2 \Big\}
		\]
	then the kernel of $I_\lambda(A)$ is uniformly bounded, independent of $\lambda$, over compact subsets of $S_r \times \mathbb{R}^2$.
	In this case the estimates \eqref{coord1} and \eqref{coord2} are trivial.
	In particular, this applies when $A$ is smoothing.
	
	Let $\rho$ be a smooth function with $\rho(s)=0$ for $s \ge -1$ and $\rho(s)=1$ for $s \le -2$.
	Following Zworski ~\cite{Z}, we break up $G$ into $G_m+G_d$, where the kernel of $G_m$ is
	\[
			\int e^{i\theta(x,\xi)+it\xi_1-iy\cdot\xi} \Big( (\rho A_+) \big( \zeta(x,\xi) \big) a(x,\xi) + (\rho A_+)' \big( \zeta(x,\xi) \big) b(x,\xi) \Big) \frac{Ai}{A_+}\big( \zeta_0(\xi) \big) \,d\xi
		\]
	and the kernel of $G_d$ is
	\[
			\int e^{i\theta(x,\xi)+it\xi_1-iy\cdot \xi} q(x,\xi) \,d\xi
		\]
	Here we have
	\begin{equation}
		\label{diffsym}
			q(x,\xi) = \Big( \big( (1-\rho) A_+ \big) \big( \zeta(x,\xi) \big) a(x,\xi) + \big( (1-\rho) A_+ \big)' \big( \zeta(x,\xi) \big) b(x,\xi) \Big) \frac{Ai}{A_+}\big( \zeta_0(\xi) \big)
		\end{equation}
	We will refer to $G_m$ as the main term and to $G_d$ as the diffractive term.
	
	Define an operator $\tilde{G}_m$ with kernel
	\[
			\int e^{i\theta(x,\xi)+it\xi_1-iy\cdot\xi} \Big( (\rho A_+) \big( \zeta(x,\xi) \big) a(x,\xi) + (\rho A_+)' \big( \zeta(x,\xi) \big) b(x,\xi) \Big) \,d\xi
		\]
	Then to control $I_\lambda(G_m)$, it suffices to show that $I_\lambda(\tilde{G}_m)$ satisfies \eqref{coord1} and \eqref{coord2}, because
	\[
			| \frac{Ai}{A_+}(s) | \le 2 \quad \text{for } s \in \mathbb{R}
		\]
	By stationary phase,
	\[
			\widehat{(\rho A_+)}(s)= 2\pi \, e^{i\frac{1}{3}s^3} \Psi_+(s)
		\]
	where $\Psi_+$ is smooth and satisfies
	\[
			\Big| \frac{d^k}{ds^k} \Psi_+(s) \Big| \le C_k
		\]
	Applying the Fourier inversion formula and changing variables gives
	\[
			(\rho A_+)(\zeta)=\int e^{i(s \xi_1^{-2/3} \zeta+\frac{1}{3}s^3 \xi_1^{-2})} \xi_1^{-2/3} \Psi_+(\xi_1^{-2/3}s) ds
		\]
	Similarly,
	\[
			(\rho A_+)'(\zeta)=\int e^{i(s \xi_1^{-2/3} \zeta+\frac{1}{3}s^3 \xi_1^{-2})} s \xi_1^{-4/3} \Psi_+(\xi_1^{-2/3}s) ds
		\]
	So the kernel of $\tilde{G}_m$ is
	\begin{multline*}
			\iint e^{i[\theta(x,\xi)+t\xi_1+s \xi_1^{-2/3} \zeta(x,\xi)+\frac{1}{3}s^3 \xi_1^{-2}-y\cdot \xi]}
			\\
			\times \xi_1^{-2/3} \Psi_+(\xi_1^{-2/3}s) \Big( a(x,\xi) + s \xi_1^{-2/3} b(x,\xi) \Big)  \,ds d\xi
		\end{multline*}
	Here the symbol
	\[
			\xi_1^{-2/3} \Psi_+(\xi_1^{-2/3}s) \Big( a(x,\xi) + s \xi_1^{-2/3} b(x,\xi) \Big)
		\]
	is of type $(2/3,1/3)$ and order $-1/2$ on $\mathbb{R}_x^2 \times \mathbb{R}_{s,\xi}^3$.
	Let $\psi_0$ be the function
	\[
			\psi_0(x,t,\xi,s)=\theta(x,\xi)+t\xi_1+s \xi_1^{-2/3} \zeta(x,\xi)+\frac{1}{3}s^3 \xi_1^{-2}
		\]
	We need to prove the following.
	\begin{Lemma}
		\label{Labnon}
			Fix $B \in S_{2/3,1/3}^{-1/2}(\R_x^2 \times \R_{s,\xi}^3)$ supported by $x$ in a small neighborhood of the origin and $\xi$ in a small conic neighborhood of the $\xi_1$-axis.
			Define an operator $V_B$ with kernel
			\[
				\iint e^{i \psi_0(x,t,\xi,s)-iy \cdot \xi} B(x,\xi,s) \,dsd\xi
			\]
			Then for any broken geodesic $\gamma$ in $S_r$ of unit length and for $f$ with fixed compact support, the operators $I_\lambda(V_B)$ satisfy
		\begin{equation}
			\label{abnon1}
				\| I_\lambda(V_B) f \|_{L^4(\gamma)} \lesssim \lambda^{1/4} \| f \|_{L^2(\mathbb{R}^2)}
			\end{equation}
		Also for any $\varepsilon>0$ and for $f$ with fixed compact support, there is a constant $C_\varepsilon$ such that the operators $I_\lambda(V_B)$ satisfy
		\begin{multline}
			\label{abnon2}
				\int_{S_r} | I_\lambda(V_B) f(x)|^2 |g(x)|^2 \,dx \le
				C_\varepsilon \lambda^{1/2} \| f \|_{L^2(\mathbb{R}^2)}^2 \sup_{\gamma \in \Pi_0} \| g \|_{L^2(\mathcal{T}_{\lambda}(\gamma))}^2
			\\
				+~\varepsilon \lambda^{1/4} \| f \|_{L^2(\mathbb{R}^2)}^2 \| g \|_{L^4(\mathbb{R}^2)}^2
				+~C \| f \|_{L^2(\mathbb{R}^2)}^2 \| g \|_{L^2(\mathbb{R}^2)}^2
			\end{multline}
		\end{Lemma}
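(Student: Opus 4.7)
Plan. The strategy is to integrate out $t$ from $I_\lambda(V_B)$, apply stationary phase in $s$ to reduce $V_B$ to a sum of Fourier integral operators whose phases satisfy the free eikonal equation \eqref{eikonal0}, and then invoke Theorems \ref{bgtth} and \ref{soggeth}.

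First, I perform the $t$-integration. Since $\int\hat\chi(t)e^{it(\xi_1-\lambda)}\,dt=2\pi\chi_\lambda(\xi_1)$, the kernel of $I_\lambda(V_B)$ is $2\pi\iint e^{i[\theta(x,\xi)+s\xi_1^{-2/3}\zeta(x,\xi)+\frac{1}{3}s^3\xi_1^{-2}-y\cdot\xi]}B(x,\xi,s)\chi_\lambda(\xi_1)\,ds\,d\xi$. Since $\chi_\lambda$ is Schwartz and peaked at $\xi_1=\lambda$, I may assume $\xi_1\sim\lambda$ throughout. I then rescale $s=\xi_1\sigma$ and use the homogeneity $\zeta(x,\xi)=\xi_1^{2/3}\tilde\zeta(x,\xi/\xi_1)$ to recast the $\sigma$-dependent part of the phase as $\xi_1(\sigma\tilde\zeta+\tfrac{1}{3}\sigma^3)$, so $\xi_1\sim\lambda$ becomes a genuine large parameter for the $\sigma$-integration.

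Next, I split $B$ by smooth cutoffs in $\tilde\zeta(x,\xi/\xi_1)$ into three pieces: elliptic $\tilde\zeta\ge c$, hyperbolic $\tilde\zeta\le-c$, and glancing $|\tilde\zeta|\le 2c$, for a small fixed $c>0$. On the elliptic piece the $\sigma$-phase derivative $\tilde\zeta+\sigma^2$ is bounded below uniformly, so after many integrations by parts in $\sigma$ (gaining a net $\xi_1^{-2/3}$ per step once the $(2/3,1/3)$ symbol losses are accounted for) this piece becomes smoothing and hence negligible. On the hyperbolic piece, stationary phase at the two nondegenerate critical points $\sigma_\pm=\pm\sqrt{-\tilde\zeta}$ produces, modulo lower-order terms, two FIOs with kernels of the form $\int e^{i[\theta_\pm(x,\xi)-y\cdot\xi]}\chi_\lambda(\xi_1)\tilde B_\pm(x,\xi)\,d\xi$, where $\theta_\pm(x,\xi)=\theta(x,\xi)\pm\tfrac{2}{3}(-\zeta(x,\xi))^{3/2}$. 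Using both equations in \eqref{eikonal0}, a direct computation gives $|d_x\theta_\pm|_x^2=|d_x\theta|_x^2+(-\zeta)|d_x\zeta|_x^2=\xi_1^2$, so each $\theta_\pm$ is a local solution of the eikonal equation for $\sqrt{-\Delta_0}$. Consequently each of the two operators is locally of the same form as $\chi_\lambda(\sqrt{-\Delta_0})\circ e^{it_\pm\sqrt{-\Delta_0}}$ for an appropriate local time $t_\pm$, and the estimates \eqref{abnon1} and \eqref{abnon2} then follow from Theorems \ref{bgtth} and \ref{soggeth} after decomposing the broken geodesic $\gamma\subset S_r$ into the finitely many smooth geodesic subarcs between reflections off $\{x_2=0\}$ and matching each subarc to whichever branch $\theta_+$ or $\theta_-$ corresponds to its direction of travel.

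The main obstacle is the glancing piece $|\tilde\zeta|\le 2c$, where the two critical points coalesce and ordinary stationary phase degenerates. Here I would fall back on direct Airy-type bounds: the inner $s$-integral essentially reduces to an Airy function evaluated at $\zeta(x,\xi)\xi_1^{-2/3}$, which is uniformly bounded, so the glancing kernel is controlled by the $(x,\xi)$-volume of the glancing sliver together with the order $-1/2$ decay of $B$. Taking $c$ sufficiently small, this contribution can be absorbed into the $\varepsilon$-slack on the right-hand side of \eqref{abnon2} and dominated by the main $\lambda^{1/4}$ term in \eqref{abnon1}. The delicate point is to quantify this glancing contribution carefully enough to beat the sharp bounds, which is exactly what the $S^{-1/2}_{2/3,1/3}$ calibration of the Melrose-Taylor parametrix is designed to make possible; this is where the bulk of the technical work of the lemma is concentrated.
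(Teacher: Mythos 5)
There is a genuine gap, and it is in the one place you yourself flag as hard: the glancing sliver. The estimates \eqref{abnon1} and \eqref{abnon2} are sharp in $\lambda$, so the glancing contribution cannot be treated as a small correction. In \eqref{abnon1} there is no $\varepsilon$-slack at all, and in \eqref{abnon2} the $\varepsilon$-term carries a smaller power $\lambda^{1/4}$ than the main $\lambda^{1/2}$ term, so ``absorbing the glancing piece into $\varepsilon$'' would require the glancing piece to already obey a bound as strong as the one you are trying to prove. Boundedness of the Airy factor gives a trivial $L^2\to L^\infty$ kernel bound, which is off by a fixed positive power of $\lambda$ from what is needed, and shrinking $c$ does not help: the width of the region $|\tilde\zeta|\le 2c$ is $\lambda$-independent while the relevant scale where the two stationary points coalesce is $|\tilde\zeta|\sim\lambda^{-2/3}$. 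Near that scale the stationary-phase gain degenerates and you recover the full degeneracy of the problem; this is not a remainder you can ignore, it is the heart of the lemma.

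Your hyperbolic reduction is also more delicate than stated. You correctly verify that $\theta_\pm=\theta\pm\tfrac{2}{3}(-\zeta)^{3/2}$ solve the eikonal equation, but that does not make the resulting operators ``locally of the same form as $\chi_\lambda(\sqrt{-\Delta_0})\circ e^{it_\pm\sqrt{-\Delta_0}}$'' for fixed times $t_\pm$; in general $\theta_\pm(x,\xi)$ is not $\varphi(t_0,x,\xi)$ for any single $t_0$. Passing from ``phase solves eikonal'' to ``bounds for $\chi_\lambda(\sqrt{-\Delta_0})$ apply'' needs a factorization through an auxiliary $L^2$-bounded FIO whose canonical relation straightens out the $x\mapsto y$ part of the correspondence. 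The paper does exactly this, and does it once and for all, without any elliptic/hyperbolic/glancing decomposition at this stage: it observes that the restriction $\mathcal{C}_0$ of the canonical relation of $V_B$ to $t=0$ is a canonical graph, writes $V_B\sim(V_B\circ J_0)\circ K_0$ with $J_0,K_0$ order-zero FIOs associated to $\mathcal{C}_0^{\mp 1}$, drops $K_0$ by $L^2$-boundedness, and then observes that $V_B\circ J_0$ is a type $(2/3,1/3)$, order-zero FIO whose canonical relation is the wave flowout, so its kernel is $\int e^{i[\varphi(t,x,\xi)-y\cdot\xi]}a(t,x,\xi)\,d\xi$ with the clean Lax phase $\varphi$ and all the Airy degeneracy pushed into the amplitude symbol class. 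The sharp kernel estimate (Lemma \ref{kernel}) then shows the $(2/3,1/3)$ growth of the amplitude is harmless at the scale of the stationary-phase expansion, and the remaining work is Lemmas \ref{Lredsog} and \ref{Lredbgt}, which run Sogge's and Burq--G\'erard--Tzvetkov's arguments against the resulting $\lambda^{1/2}e^{-i\lambda d_0(x,y)}A_\lambda(x,y)$ kernel. In short, the paper never reopens the glancing degeneracy inside this lemma; your plan reopens it and then does not close it.
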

	
	We have seen that the estimates for the main term will follow from Lemma ~\ref{Labnon}.
	Before proving Lemma ~\ref{Labnon}, we will show that it also implies the estimates for the diffractive term.
	First, we will show that for $x$ in $S_r$ and for $\xi$ in a small conic neighborhood of the $\xi_1$-axis, the symbol $q(x,\xi)$ defined by \eqref{diffsym} can be written as
	\begin{equation}
		\label{qhform}
			q(x,\xi) = h\big(x,\xi,\zeta(x,\xi)\big)
		\end{equation}
	where
	\[
			\Big| \partial_\xi^\alpha \partial_\zeta^j\partial_{x_1}^k\partial_{x_2}^\ell h(x,\xi_1,\zeta) \Big| \le C_{\alpha,j,k,\ell} \, \xi_1^{1/6-|\alpha|+2\ell/3} e^{-cx_2^{3/2}\xi_1-\frac{1}{2}|\zeta|^{3/2}}
		\]
	for some $c>0$.
	Fix $\eps >0$.
	Then
	\[
			\Big| \del_\zeta^k \big( (1-\rho) A_+ \big)(\zeta) \Big| \le C_{\eps,k} \, e^{(\frac{2}{3}+\eps)|\zeta|^{3/2}}
		\]
	If $\eps$ is small, then it suffices to show that, in the region $\zeta(x,\xi) \ge -2$,
	\[
			\frac{Ai}{A_+} \big(\zeta_0(\xi)\big)=H\big(x,\xi_1,\zeta(x,\xi)\big)
		\]
	where
	\begin{equation}
		\label{hform}
			\Big| \partial_{\xi_1}^m \partial_\zeta^j\partial_{x_1}^k\partial_{x_2}^\ell H(x,\xi_1,\zeta) \Big| \le C_{m,j,k,\ell} \, \xi_1^{-m+2\ell/3} e^{-cx_2^{3/2}\xi_1-(\frac{4}{3}-\eps)|\zeta|^{3/2}}
		\end{equation}
	By \eqref{bczeta}, there is a $c>0$ such that
	\[
			\zeta_0(\xi) \ge \zeta(x,\xi) + c x_2 \xi_1^{2/3}
		\]  
	In the region $\zeta(x,\xi) \ge -2$, the asymptotics of the Airy functions now yield
	\begin{equation}
		\label{airyas}
			\Big| \Big(\frac{Ai}{A_+}\Big)^{(m)}\big(\zeta_0(\xi)\big) \Big| \le C_{\eps,m} e^{-cx_2^{3/2}\xi_1-(\frac{4}{3}-\eps)|\zeta(x,\xi)|^{3/2}}
		\end{equation}
	Define a new variable
	\[
			\tau(x,\xi)=\xi_1^{1/3} \zeta(x,\xi)
		\]
	When $x_2=0$, we have $\tau=-\xi_2$.
	It follows that we can write $\xi_2=\sigma(x,\xi_1,\tau)$, where $\sigma$ is homogeneous of degree 1 in $(\xi_1,\tau)$.
	Now we define
	\[
			H(x,\xi_1,\zeta)=\frac{Ai}{A_+}\big(-\xi_1^{-1/3}\sigma(x,\xi_1,\xi_1^{1/3}\zeta)\big)
		\]
	To prove \eqref{hform} it suffices to show that
	\begin{multline}
		\label{hred}
			\Big| \partial_{\xi_1}^m \partial_\tau^j\partial_{x_1}^k\partial_{x_2}^\ell \frac{Ai}{A_+}\big(-\xi_1^{-1/3}\sigma(x,\xi_1,\tau)\big) \Big| 
			\\
			\le C_{m,j,k,\ell} \, \xi_1^{-m-j+2\ell/3} e^{-cx_2^{3/2}\xi_1-(\frac{4}{3}-\eps)|\tau|^{3/2}\xi_1^{-1/2}}
		\end{multline}
	If $x_2=\tau=0$, then $\sigma(x,\xi_1,\tau)=0$.
	So the homogeneity of $\sigma$ implies that
	\[
			\Big| \partial_{\xi_1}^m \partial_\tau^j\partial_{x_1}^k \big(-\xi_1^{-1/3}\sigma(x,\xi_1,\tau)\big) \Big| \le C_{m,j,k} (x_2 \xi_1^{2/3}+\xi_1^{-1/3}|\tau|)\xi_1^{-m-j}
		\]
	Together with \eqref{airyas}, this implies \eqref{hred} when $\ell=0$.
	It also follows for other values of $\ell$ because differentiating with respect to $x_2$ in \eqref{hred} is similar to multiplying by a symbol of type $(1,0)$ and order $2/3$.
	Then \eqref{hform} follows.
	
	Now we can write the Fourier transform of $h(x,\xi,\zeta)$ in the $\zeta$-variable as
	\[
			\int e^{-is\zeta} q_0(x,\xi,\zeta) \,d\zeta = 2\pi \, e^{i\frac{1}{3}s^3} w(x,\xi,s)
		\]
	where, for any $N>0$,
	\[
			\Big| \partial_\xi^\alpha \partial_s^j \partial_{x_1}^k \partial_{x_2}^\ell w(x,\xi,s) \Big| \le C_{\alpha,j,k,\ell} \, \xi_1^{1/6-|\alpha|+2\ell/3} e^{-cx_2^{3/2}\xi_1} (1+s)^{-N}
		\]
	Applying the Fourier inversion formula and changing variables gives
	\[
			q_0(x,\xi,\zeta)=\int e^{i(s\xi_1^{-2/3}\zeta+\frac{1}{3}s^3\xi_1^{-2})} \xi_1^{-2/3}w(x,\xi,\xi_1^{-2/3}s) \,ds
		\]
	Now we can write the kernel of $G_d$ as
	\[
			\iint e^{i\psi_0(x,t,\xi,s)-iy \cdot \xi} c(x,\xi,s) \,dsd\xi
		\]
	where $c$ is supported by $x$ in a small ball and by $\xi$ in a small conic neighborhood of the $\xi_1$ axis and satisfies
	\[
			\Big| \partial_\xi^\alpha \partial_s^j \partial_{x_1}^k \partial_{x_2}^\ell c(x,\xi,s) \Big| \le C_{\alpha,j,k,\ell} \, \xi_1^{-1/2-|\alpha|-2j/3+2\ell/3} e^{-cx_2^{3/2}\xi_1} (1+\xi_1^{-2/3}s)^{-N}
		\]
	for any $N>0$.
	In particular,
	\begin{equation*}
		x_2^j \partial_{x_2}^k c(x,\xi,s) \in S_{2/3, 1/3}^{-1/2+2(k-j)/3}(\mathbb{R}_{x_1} \times \mathbb{R}_{\xi,s}^3)
	\end{equation*}
	uniformly over $x_2$.
	
	Let $v$ be in $C_0^\infty(\R^2)$ have small support and satisfy $c(x,\xi,s)=v(x)c(x,\xi,s)$.
	Then we have
	\begin{equation*}
		c(x,\xi,s)=v(x)c(x_1,0,\xi,s)+\int_0^{x_2}v(x)\partial_{x_2}c(x_1,\sigma,\xi,s) \,d\sigma
	\end{equation*}
	So we can write $G_d=A_d+B_d$ where the kernel of $A_d$ is
	\[
			\iint e^{i\psi_0(x,t,\xi,s)-iy \cdot \xi} v(x)c(x_1,0,\xi,s) \,dsd\xi
		\]
	The symbol $v(x)c(x_1,0,\xi,s)$ is of type $(2/3, 1/3)$ and order $-1/2$.
	So $I_\lambda(A_d)$ satisfies \eqref{coord1} and \eqref{coord2} by Lemma ~\ref{Labnon}.
	
	The kernel of $I_\lambda(B_d)$ is
	\[
			\int_0^{x_2} \iiint \hat{\chi}(t) e^{-it\lambda +i\psi_0(x,t,\xi,s)-iy \cdot \xi} v(x) \partial_{x_2} c(x_1,\sigma,\xi,s) \,dsd\xi dtd\sigma
		\]
	Let $\beta$ be a smooth function supported in $[1/3, 3]$ with $\beta=1$ on $[1/2, 2]$.
	Define operators $B_\lambda$ with kernels
	\[
			\int_0^{x_2} \iint e^{i\psi_0(x,t,\xi,s)-iy \cdot \xi} \beta\Big(\frac{\xi_1}{\lambda}\Big) v(x) \partial_{x_2}c(x_1,\sigma,\xi,s) \,dsd\xi d\sigma
		\]
	The kernel of $I_\lambda(B_\lambda)$ is
	\[
			\int_0^{x_2} \iiint \hat{\chi}(t) e^{-it\lambda+i\psi_0(x,t,\xi,s)-iy \cdot \xi} \beta \Big( \frac{\xi_1}{\lambda} \Big) v(x) \partial_{x_2}c(x_1,\sigma,\xi,s) \,dsd\xi dtd\sigma
		\]
	Since $\partial_t \psi_0 = \xi_1$, an integration by parts argument shows that $I_\lambda(B_d)$ differs from $I_\lambda(B_\lambda)$
		by an operator whose kernel is uniformly bounded, independent of $\lambda$.
	So it suffices to prove $I_\lambda(B_\lambda)$ satisfies \eqref{coord1} and \eqref{coord2}.
	Let
	\[
			P_{\sigma,\lambda}(x,\xi,s)=v(x) \beta \Big( \frac{\xi_1}{\lambda} \Big) \partial_{x_2} c(x_1,\sigma,\xi,s)
		\]
	Then
	\[
			| I_\lambda(B_\lambda) f | \le \int \Big| \iiiint \hat{\chi}(t) e^{-it\lambda +i\psi_0(x,t,\xi,s)-iy \cdot \xi} P_{\sigma,\lambda}(x,\xi,s) f(y) \,dydsd\xi dt \Big| d\sigma
		\]
	Define operators $B_{\sigma,\lambda}$ by
	\[
			B_{\sigma,\lambda}f(t,x)= \iiint e^{i\psi_0(x,t,\xi,s)-iy \cdot \xi} \lambda^{-2/3}(1+\lambda^{4/3}\sigma^2) P_{\sigma,\lambda}(x,\xi,s) f(y) \,dydsd\xi
		\]	
	By Minkowski's integral inequality and H\"older's inequality,
	\begin{equation}
		\label{supr1}
			\| I_\lambda(B_\lambda) f \|_\Lp{2}{\gamma} \leC \sup_\sigma \| I_\lambda(B_{\sigma,\lambda}) f \|_\Lp{2}{\gamma}
		\end{equation}
	Also
	\begin{equation}
		\label{supr2}
			\int_{S_r} | I_\lambda(B_\lambda) f(x)|^2 |g(x)|^2 \,dx \leC \sup_\sigma \int_{S_r} | I_\lambda(B_{\sigma,\lambda}) f(x)|^2 |g(x)|^2 \,dx
		\end{equation}
	The amplitudes
	\[
			\lambda^{-2/3}(1+\lambda^{4/3}\sigma^2) P_{\sigma,\lambda}(x,\xi,s)
		\]
	are symbols of type $(2/3, 1/3)$ and order $-1/2$ over $\mathbb{R}_x^2 \times \mathbb{R}_{\xi,s}^3$, uniformly in $\sigma$ and $\lambda$.
	By Lemma ~\ref{Labnon}, the operators $I_\lambda(B_{\sigma,\lambda})$ satisfy \eqref{coord1} and \eqref{coord2}, uniformly in $\sigma$.
	Then $I_\lambda(B_\lambda)$ satisfies \eqref{coord1} and \eqref{coord2} because of \eqref{supr1} and \eqref{supr2}.
	So Lemma \ref{Labnon} will imply the estimates for the diffractive term.
	
	To prove Lemma \ref{Labnon}, note that $V_B$ is a Fourier integral operator of type $(2/3,1/3)$ and order zero associated to the canonical relation $\mathcal{C}$ given by
	\[
			\mathcal{C}=\Big\{ \Big(x,t, \nabla_x \psi_0(x,t,\xi,s), \xi_1; \nabla_\xi \psi_0(x,t,\xi,s), \xi \Big) : \zeta(x,\xi)=-s^2 \xi_1^{-4/3} \}
		\]
	Let $\mathcal{C}_0$ be the restriction of $\mathcal{C}$ to $t=0$.
	It was shown in the proof of Lemma A.2 of Smith-Sogge \cite{SSog2} that $\mathcal{C}_0$ is the graph of a canonical transformation.
	
	The projection of $\mathcal{C}$ onto $T^*(\mathbb{R}_{x,t}^3)$ is contained in the characteristic variety of $\square_0$, because of \eqref{eikonal0}.
	So the canonical relation $\mathcal{C}\circ\mathcal{C}_0^{-1}$ is the flowout, under the bicharacteristic flow of $\square_0$, of a conical subset of the diagonal at $t=0$.
	By the Lax construction, $\mathcal{C}\circ\mathcal{C}_0^{-1}$ can be parametrized by a phase function
	\[
			\varphi(t,x,\xi)-y\cdot\xi
		\]
	where $\varphi$ satisfies
	\begin{equation}
		\label{eikonal}
			\varphi(0,x,\xi)=x \cdot \xi \quad \text{and} \quad \frac{\partial \varphi}{\partial t} = p_0\Big(x, \frac{\partial \varphi}{\partial x}\Big)
		\end{equation}
	Here $p_0$ is the principal symbol of $\sqrt{-\Delta_0}$, that is
	\[
			p_0(x,\xi)=\sqrt{\sum g^{jk}(x)\xi_j\xi_k}
		\]
	Since $\varphi(t,x,\xi)-y\cdot \xi$ parametrizes $\mathcal{C}\circ\mathcal{C}_0^{-1}$, it follows that for small $t$,
	\begin{equation}
		\label{flow}
			y=\varphi_\xi'(t,x,\xi) \quad \text{implies} \quad t=d_0(x,y)
		\end{equation}
	
	Now let $J_0$ and $K_0$ be Fourier integral operators of order zero, compactly supported on both sides, associated to the canonical relations $\mathcal{C}_0^{-1}$ and $\mathcal{C}_0$, respectively,
		such that $V_B \circ J_0 \circ K_0$ differs from $V_B$ by a smoothing operator.
	To prove Lemma \ref{Labnon}, we need to show that $I_\lambda(V_B \circ J_0 \circ K_0)$ satisfies \eqref{coord1} and \eqref{coord2}.
	By the $L^2$ continuity of $K_0$, it suffices to show instead that $I_\lambda(V_B \circ J_0)$ satisfies \eqref{coord1} and \eqref{coord2}.
	Here $V_B \circ J_0$ is a Fourier integral operator of type $(2/3,1/3)$ and order zero, associated to the canonical relation $\mathcal{C}\circ\mathcal{C}_0^{-1}$.
	So its kernel, modulo smoothing operators, is of the form
	\[
			\int e^{i[\varphi(t,x,\xi)-y\cdot\xi]}a(t,x,\xi) \, d\xi
		\]
	where $a$ is a symbol of type $(2/3,1/3)$ and order zero on $\mathbb{R}_{t,x}^3 \times \mathbb{R}_\xi^2$.
	To show $I_\lambda(V_B \circ J_0)$ satisfies \eqref{coord2}, it now suffices to prove the following lemma.
	
	\begin{Lemma}
	\label{Lredsog}
		Fix $a \in S_{2/3,1/3}^0(\mathbb{R}_{t,x}^3 \times \mathbb{R}_\xi^2)$, supported by $x$ in a small neighborhood of $S_r$.
		Define an operator $U_a$ by
		\begin{equation*}
			U_a f = \iint e^{i\varphi(t,x,\xi)-iy\cdot\xi} a(t,x,\xi) f(y) \, d\xi dy
		\end{equation*}
		For any $\varepsilon > 0$ there is a constant $C_\varepsilon$ such that for $f$ with fixed compact support,
		\begin{multline*}
			\int_{S_r} | I_\lambda(U_a) f(x)|^2 |g(x)|^2 \,dx \le
			C_\varepsilon \lambda^{1/2} \| f \|_{L^2(\mathbb{R}^2)}^2 \sup_{\gamma \in \Pi_0} \| g \|_{L^2(\mathcal{T}_{\lambda}(\gamma))}^2
		\\
			+~\varepsilon \lambda^{1/4} \| f \|_{L^2(\mathbb{R}^2)}^2 \| g \|_{L^4(\mathbb{R}^2)}^2
			+~C \| f \|_{L^2(\mathbb{R}^2)}^2 \| g \|_{L^2(\mathbb{R}^2)}^2
		\end{multline*}
	\end{Lemma}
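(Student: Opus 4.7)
The plan is to adapt the $TT^*$--Schur test argument that Sogge used to prove Theorem \ref{soggeth}, now accounting for the fact that here $a \in S_{2/3,1/3}^0$ and the phase $\varphi$ satisfies the eikonal equation \eqref{eikonal} together with the distance identification \eqref{flow}. First, setting $T_a = I_\lambda(U_a)$ and expanding,
\[
\int_{S_r} |T_a f(x)|^2 |g(x)|^2 \, dx = \langle T_a^* \, |g|^2 \, T_a f, f \rangle \leq \|f\|_{L^2(\mathbb{R}^2)}^2 \sup_{y'} \int |K(y,y')| \, dy,
\]
where $K(y,y') = \int T_a(x,y) \overline{T_a(x,y')} |g(x)|^2 \, dx$ and I have used Schur's test on the Hermitian kernel $K$. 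It then suffices to bound $\sup_{y'} \int |K(y,y')| \, dy$ by the right side of the claimed inequality, divided by $\|f\|_{L^2}^2$.

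The next step is to extract the geometry of $K$ via stationary phase. Writing $T_a(x,y)$ as an oscillatory integral in $(t,\xi)$, one applies stationary phase first in $t$ (where $\partial_t\varphi = p_0(x,\partial_x\varphi)$ by \eqref{eikonal}) and then in $\xi$ (using \eqref{flow} to identify the critical point $y = \varphi_\xi$ with $t = d_0(x,y)$). This produces the WKB ansatz $T_a(x,y) \sim \lambda^{1/2}\,\tilde a(x,y)\,e^{i\lambda d_0(x,y)}$ plus lower-order contributions, concentrated on $\{d_0(x,y) \in \operatorname{supp}\hat\chi\}$. Forming $K(y,y')$ and doing a third stationary phase in $x$, the phase $\lambda(d_0(x,y)-d_0(x,y'))$ has nondegenerate critical point exactly when $y,y'$ lie on a common geodesic through $x$. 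This yields the pointwise bound
\[
|K(y,y')| \lesssim \frac{\lambda^{1/2}}{(1+\lambda|y-y'|)^{1/2}} \int_{\gamma_{y,y'}} |g|^2 \, d\sigma \cdot \bigl(1+\lambda^{1/2} d_0(y,\gamma_{y,y'})\bigr)^{-N} + R(y,y'),
\]
where $\gamma_{y,y'}$ is the short geodesic through $y,y'$ and $R$ is a smoothing remainder.

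Splitting the $y$-integration against this pointwise bound produces the three terms in the claimed inequality. Fix a small auxiliary parameter $\mu>0$. For $|y-y'| \gtrsim \lambda^{-1/2+\mu}$, the rapid-decay factor localizes $y$ to the tube $\mathcal{T}_\lambda(\gamma)$ of width $\lambda^{-1/2}$ about the geodesic through $y'$; a change of variables along geodesic coordinates then bounds the $|g|^2$-integral by $\sup_\gamma \|g\|_{L^2(\mathcal{T}_\lambda(\gamma))}^2$ times a harmless factor, yielding the main term $C_\varepsilon \lambda^{1/2}\|f\|_{L^2}^2 \sup_\gamma \|g\|_{L^2(\mathcal{T}_\lambda(\gamma))}^2$. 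On the near-diagonal region $|y-y'| \lesssim \lambda^{-1/2+\mu}$, one instead applies H\"older's inequality to estimate $\int|g|^2$ on balls of radius $\sim\lambda^{-1/2+\mu}$ by $\lambda^{-1+2\mu}\|g\|_{L^4}^2$; combining with the factor $\lambda^{1/2}(1+\lambda|y-y'|)^{-1/2}$ and integrating in $y$ produces the term $\varepsilon \lambda^{1/4}\|g\|_{L^4}^2$, with $\varepsilon$ absorbing the small $\mu$-dependent constant. The smoothing remainder $R$ contributes the harmless $C\|g\|_{L^2}^2$.

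The principal obstacle is tracking the symbol class $S_{2/3,1/3}^0$ through these stationary-phase computations, since Sogge's original argument assumed classical $(1,0)$ symbols and the intermediate amplitudes here gain only fractional powers of $\lambda$ per integration by parts. The remedy is to dyadically decompose $a$ in $\xi$ into sectors of angular width $\lambda^{-1/3}$, on each of which $a$ behaves like a $(1,0)$ symbol at a smaller scale, so that the stationary-phase estimates apply with uniform constants to each piece. The sum over the $O(\lambda^{1/3})$ sectors is then controlled by $L^2$ almost-orthogonality (via the canonical transformation $\mathcal{C}_0$), together with the observation that each $\lambda^{-1/2}$-tube already contains many angular pieces so that the tube supremum on the right-hand side is not lost. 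Carrying out this bookkeeping while simultaneously keeping track of the $\varepsilon$-dependent splitting is the delicate part of the proof.
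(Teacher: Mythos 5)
Your Schur-test plan is a genuinely different route from the paper's, but it contains a gap that I do not think can be repaired in the form you describe, and it also misses a reduction that the paper treats as essential.

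The main gap is the claimed pointwise bound on $K(y,y') = \int T_a(x,y)\overline{T_a(x,y')}\,|g(x)|^2\,dx$. You assert that a ``third stationary phase in $x$'' localizes this kernel to a rapidly decaying weight concentrated on the tube around $\gamma_{y,y'}$. But $|g(x)|^2$ is merely an $L^{1}$ function; you cannot integrate by parts in $x$ against it, and no pointwise bound on $K(y,y')$ that makes $K$ small whenever $g$ lives far from $\gamma_{y,y'}$ can hold. Concretely, if $|g|^2$ is the indicator of a small ball disjoint from the tube, non-stationary phase in $x$ still costs a full power of $\lambda^{-1}$ because there is no smooth amplitude to differentiate repeatedly, so $K(y,y')$ is $O(1)$ rather than $O(\lambda^{-N})$; if $g$ is a sum of many such bumps the Schur integral $\sup_{y'}\int|K(y,y')|\,dy$ scales like $\lambda\|g\|_{L^2}^2$, which the three terms on the right cannot absorb when $\lambda$ is large. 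The paper avoids this issue by never forming a kernel in which $|g|^2$ appears inside an integral to be bounded pointwise. Instead, after passing to Fermi coordinates in the $y$-variable and splitting into $\lambda^{-1/2}$-windows $\eta_{\lambda,j}$, the far-diagonal piece ($|j-k|>N$) is estimated purely as an oscillatory integral in $x$ with a smooth Carleson--Sj\"olin amplitude (Lemma \ref{Lcsj}) giving an $L^2_x$ bound of size $\lambda^{1/4}N^{-1/2}\|h\|^2$, and only then is Cauchy--Schwarz used in $x$ to bring in $\|g\|_{L^4}^2$. The near-diagonal piece ($|j-k|\le N$) is coupled to the tube supremum via an angular decomposition of the $x$-domain into sectors $V_k$ together with Gauss' lemma (Lemma \ref{Lgauss}). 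Note that the $\varepsilon$-term in the paper comes from the \emph{far} piece with $N\to\infty$, while in your plan the $\varepsilon$ is supposed to come from the near region with $\mu\to 0$; as written, it is not clear that the $\mu$-parameter produces a genuine small constant in front of $\lambda^{1/4}\|g\|_{L^4}^2$.

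A second missing ingredient is the symbol switch. The paper does not leave the symbol $a(t,x,\xi)$ as is: via Proposition~1.2.5 of H\"ormander it replaces $a(t,x,\xi)$ by an asymptotic series of symbols $b(t,y,\xi)$ (Lemma \ref{Lswitched}), and this is what makes Lemma \ref{kernel} give an amplitude $A_\lambda(x,y)$ with $|\partial_x^\alpha\partial_y^\beta A_\lambda|\le C_{\alpha,\beta}\lambda^{|\beta|/3}$ --- crucially, $x$-derivatives of $A_\lambda$ are \emph{bounded} uniformly in $\lambda$. For $a\in S^0_{2/3,1/3}$ depending on $x$, the $x$-derivatives of $a$ already cost $\lambda^{1/3}$ on the relevant frequency shell, so without the $a\to b$ switch the stationary-phase amplitude has growing $x$-derivatives and the hypotheses of Lemma \ref{Lcsj} (which requires $|\partial_x^\alpha B_\lambda|\le C_\alpha$ with $C_\alpha$ fixed) fail. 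Your proposed fix --- angular dyadic decomposition of $a$ into $\lambda^{-1/3}$ sectors plus almost-orthogonality --- addresses the wrong obstruction: almost-orthogonality of the angular pieces is an $L^2\!\to\!L^2$ statement, whereas the quantity you need to control is the $|g|^2$-weighted $L^2$ norm of $T_af$, and the angular pieces do not decouple against an arbitrary weight $|g|^2$. The paper's reduction sidesteps both difficulties.
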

	
	We will prove Lemma \ref{Lredsog} in the next section.
	This will complete the proof of Theorem \ref{thlink}.
	The next lemma will show that $I_\lambda(V_B \circ J_0)$ satisfies \eqref{coord1}.
	
	\begin{Lemma}
	\label{Lredbgt}
		Fix $a \in S_{2/3,1/3}^0(\mathbb{R}_{t,x}^3 \times \mathbb{R}_\xi^2)$, supported by $x$ in a small neighborhood of $S_r$.
		Define an operator $U_a$ by
		\begin{equation*}
			U_a f = \iint e^{i\varphi(t,x,\xi)-iy\cdot\xi} a(t,x,\xi) f(y) \, d\xi dy
		\end{equation*}
		For any broken geodesic $\gamma$ in $S_r$ of unit length, and for $f$ with fixed compact support,
		\begin{equation*}
				\| I_\lambda(U_a) f \|_{L^4(\gamma)} \lesssim \lambda^{1/4} \| f \|_{L^2(\mathbb{R}^2)}
			\end{equation*}
	\end{Lemma}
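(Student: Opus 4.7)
The plan is to carry out the $TT^*$ argument of Burq-G\'erard-Tzvetkov \cite{BGT}, adapted to the exotic symbol class $S^0_{2/3,1/3}$. Since $\gamma$ is a unit-length broken geodesic contained in $S_r$, it decomposes into a uniformly bounded number of smooth geodesic arcs in $M_0$, joined at reflection points on $\{x_2=0\}$. By the triangle inequality in $L^4$, it suffices to prove the estimate for a single smooth arc; I will parametrize it by arclength as $\gamma:[0,L]\to S_r$ and set $R_\lambda f(s)=I_\lambda(U_a)f(\gamma(s))$. The standard $TT^*$ identity reduces the desired bound $\|R_\lambda\|_{L^2\to L^4}\lesssim\lambda^{1/4}$ to the operator bound $\|R_\lambda R_\lambda^*\|_{L^{4/3}\to L^4}\lesssim\lambda^{1/2}$.

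Computing the kernel of $R_\lambda R_\lambda^*$ via Plancherel in the $y$-integration yields
\[
K(s_1,s_2)=\iiint \hat\chi(t_1)\overline{\hat\chi(t_2)}\,e^{i\Psi}\,a(t_1,\gamma(s_1),\xi)\overline{a(t_2,\gamma(s_2),\xi)}\,dt_1\,dt_2\,d\xi,
\]
with $\Psi=(t_2-t_1)\lambda+\varphi(t_1,\gamma(s_1),\xi)-\varphi(t_2,\gamma(s_2),\xi)$. By the eikonal equation \eqref{eikonal}, $\partial_{t_i}\Psi=p_0(\gamma(s_i),\partial_x\varphi)-\lambda$, and integration by parts in $t_1,t_2$ localizes $\xi$ to the shell $\{\,||\xi|-\lambda|\lesssim 1\,\}$ of volume $O(\lambda)$. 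Passing to polar coordinates $\xi=r\omega$ and using the flow identity \eqref{flow} to identify the angular critical points of $\Psi$ with the direction of the geodesic from $\gamma(s_1)$ to $\gamma(s_2)$, a one-dimensional stationary phase in $\omega$ with angular Hessian of size $\lambda|s_1-s_2|$ gives
\[
|K(s_1,s_2)|\lesssim \lambda(1+\lambda|s_1-s_2|)^{-1/2}.
\]
Splitting $K$ at the scale $|s_1-s_2|\sim\lambda^{-1}$ and applying Young's inequality to the near-diagonal piece together with Hardy-Littlewood-Sobolev (with fractional exponent $1/2$) to the off-diagonal piece, each contributes norm $O(\lambda^{1/2})$ as an operator $L^{4/3}([0,L])\to L^4([0,L])$, giving the claim.

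The main obstacle is executing the angular stationary phase for symbols in $S^0_{2/3,1/3}$. Each $\omega$-derivative of $a$ at $|\xi|\sim\lambda$ grows like $\lambda^{1/3}$ (since one $\xi$-derivative costs $\lambda^{-2/3}$ and $\partial_\omega\sim r\partial_\xi\sim\lambda\partial_\xi$), while the Fresnel scale in $\omega$ is $(\lambda|s_1-s_2|)^{-1/2}$; the two balance precisely at $|s_1-s_2|\sim\lambda^{-1/3}$. For $|s_1-s_2|\gtrsim\lambda^{-1/3}$ the stationary phase asymptotic expansion in $\omega$ is justified with errors controlled by the Hessian, while for $|s_1-s_2|\lesssim\lambda^{-1/3}$ one falls back on the trivial bound $|K|\lesssim\lambda$ (acceptable since the measure of such intervals is small), or, more cleanly, introduces a second-microlocal decomposition of $\xi$-space into angular patches of width $\lambda^{-1/3}$ adapted to the $(2/3,1/3)$ scaling in the spirit of Smith-Sogge \cite{SSog2}. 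Verifying that the pieces reassemble uniformly into the stated kernel bound is the technical heart of the argument.
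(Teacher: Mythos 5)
Your overall architecture — reduce the broken geodesic to smooth arcs, run $TT^*$, obtain the kernel bound $|K(s_1,s_2)|\lesssim\lambda(1+\lambda|s_1-s_2|)^{-1/2}$, and finish with Hardy--Littlewood--Sobolev — matches the final BGT-style stage of the paper's argument. However, you apply $TT^*$ \emph{before} integrating out $\xi$, and that ordering is where the gap lies.

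Let me make the gap precise. In your kernel $K(s_1,s_2)$, the integration variable carrying the exotic regularity is $\xi$, and in polar coordinates the amplitude $a(t_1,\gamma(s_1),r\omega)\overline{a(t_2,\gamma(s_2),r\omega)}$ has $\omega$-derivatives of size $\lambda^{1/3}$ at $r\sim\lambda$. The Fresnel scale of the angular phase is $(\lambda|s_1-s_2|)^{-1/2}$, so stationary phase with controlled errors requires $|s_1-s_2|\gtrsim\lambda^{-1/3}$; you identified this correctly. But your stated fallback — the trivial bound $|K|\lesssim\lambda$ on the region $|s_1-s_2|\lesssim\lambda^{-1/3}$ — is quantitatively insufficient. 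Feeding $\lambda\cdot\mathbf{1}_{|s_1-s_2|\le\lambda^{-1/3}}$ into Young's inequality as an operator $L^{4/3}\to L^4$ requires the $L^2$ norm of the kernel profile, which is $\lambda\cdot(\lambda^{-1/3})^{1/2}=\lambda^{5/6}$, not the required $O(\lambda^{1/2})$. The problematic range is specifically $\lambda^{-1}\lesssim|s_1-s_2|\lesssim\lambda^{-1/3}$, where the trivial bound $\lambda$ grossly overshoots the needed $\lambda^{1/2}|s_1-s_2|^{-1/2}$. Your alternative suggestion — angular second-microlocalization at scale $\lambda^{-1/3}$ — is plausibly the right repair, but you leave it entirely unverified, and it is exactly there that the analysis becomes nontrivial (the patches do not all enjoy the same van der Corput gain, and summing them requires careful integration by parts in $\omega$ with the exotic amplitude growth tracked patch by patch). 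As written, the proof has a genuine gap.

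The paper sidesteps this entirely by doing the full $(t,\xi)$-stationary phase \emph{before} $TT^*$. In Lemma~\ref{unkernel}, the kernel of $I_\lambda(D_a)$ is written as $\lambda^{1/2}e^{-i\lambda d_0(x,y)}A_\lambda(x,y)+R_\lambda(x,y)$ with $|\partial_x^\alpha\partial_y^\beta A_\lambda|\le C_{\alpha,\beta}\lambda^{|\alpha|/3}$. The crucial point is that the $y$-derivatives of $A_\lambda$ are \emph{uniformly bounded}: the symbol $a(t,x,\xi)$ has no $y$-dependence, so after stationary phase the only $\lambda$-growth lands on $x$-derivatives. (The $(t,\xi)$-stationary phase has fixed Fresnel scale $\lambda^{-1/2}$, against which the $\lambda^{1/3}$ growth per derivative costs only $\lambda^{-1/6}\ll 1$, so the exotic class is tame there.) Consequently, in the subsequent BGT $TT^*$ argument (Lemma \ref{kerest}), the angular variable $\omega$ parametrizes $y$, the amplitude $A_{\lambda,\rho}(x(t),\omega)\overline{A_{\lambda,\rho}(x(\tau),\omega)}$ has uniformly bounded $\omega$-derivatives, and the angular stationary phase is classical — giving the kernel bound for \emph{all} $|t-\tau|$, with no problematic intermediate regime. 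If you want to keep your order of operations, you would need to actually carry out the second-microlocal decomposition you gesture at; otherwise, performing the $(t,\xi)$-stationary phase first, as the paper does, is the cleaner route.
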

	
	We will prove Lemma \ref{Lredbgt} in the fourth section.
	This will complete the proof of Theorem \ref{thbgt}.
	
\section{End of Proof of Theorem 1.3}
	
	To prove Theorem ~\ref{thlink}, it remains to prove Lemma \ref{Lredsog}.	
	This will be a consequence of the following variant.
	To state it, let $\eta(x,y)$ be in $C_0^\infty(\R^2 \times \R^2)$ be supported by $x$ and $y$ in a small neighborhood of $S_r$ satisfying $\frac{1}{2}\delta \le d_0(x,y) \le \delta$.
	Also assume $\eta(x,y)=1$ when $x$ is in a small neighborhood of $S_r$ and $d_0(x,y)$ is in an open neighborhood of the support of $\hat{\chi}$.

	\begin{Lemma}
	\label{Lswitched}
		Fix $b \in S_{2/3,1/3}^0(\mathbb{R}_{t,y}^3 \times \mathbb{R}_\xi^2)$.
		Define an operator $T_b$ by
		\begin{equation*}
			T_b f = \iint e^{i\varphi(t,x,\xi)-iy\cdot\xi} \eta(x,y) b(t,y,\xi) f(y)\, d\xi dy
		\end{equation*}
		For any $\varepsilon > 0$ there is a constant $C_\varepsilon$ such that for $f$ with fixed compact support,
		\begin{multline*}
		\label{redineq}
			\int_{S_r} | I_\lambda(T_b) f(x)|^2 |g(x)|^2 \,dx \le
			C_\varepsilon \lambda^{1/2} \| f \|_{L^2(\mathbb{R}^2)}^2 \sup_{\gamma \in \Pi_0} \| g \|_{L^2(\mathcal{T}_{\lambda}(\gamma))}^2
		\\
			+~\varepsilon \lambda^{1/4} \| f \|_{L^2(\mathbb{R}^2)}^2 \| g \|_{L^4(\mathbb{R}^2)}^2
			+~C \| f \|_{L^2(\mathbb{R}^2)}^2 \| g \|_{L^2(\mathbb{R}^2)}^2
		\end{multline*}
	\end{Lemma}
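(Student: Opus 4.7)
The plan is to reduce Lemma~\ref{Lswitched} to Lemma~\ref{Lredsog} by converting the $y$-dependent amplitude $\eta(x,y)\, b(t,y,\xi)$ of $T_b$ into a finite sum of operators of the form $U_a$ with $a(t,x,\xi) \in S^0_{2/3,1/3}(\R_{t,x}^3 \times \R_\xi^2)$, plus a remainder whose kernel is uniformly bounded in $\lambda$.  The mechanism is a Taylor expansion of the full amplitude in $y$ about the critical point $y = \varphi_\xi'(t,x,\xi)$ of the phase $\varphi(t,x,\xi)-y\cdot\xi$, followed by integration by parts in $\xi$ via
\[
(y_j - \partial_{\xi_j}\varphi)\, e^{i(\varphi - y\cdot\xi)} = i\,\partial_{\xi_j}\, e^{i(\varphi - y\cdot\xi)}.
\]
By \eqref{flow}, the critical point $y = \varphi_\xi'(t,x,\xi)$ corresponds to $t = d_0(x,y)$, which for $t \in \mathrm{supp}\,\hat\chi$ lies in the region where $\eta = 1$, so the expansion is anchored where it is useful.

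Concretely, I set $\tilde c(t,x,y,\xi) = \eta(x,y)\, b(t,y,\xi)$ and fix a large integer $N$.  Expanding
\[
\tilde c(t,x,y,\xi) = \sum_{|\alpha| < N} \frac{1}{\alpha!}\, \bigl(y - \varphi_\xi'(t,x,\xi)\bigr)^\alpha \bigl(\partial_y^\alpha \tilde c\bigr)\bigl(t,x,\varphi_\xi'(t,x,\xi),\xi\bigr) + R_N(t,x,y,\xi),
\]
substituting into the kernel of $T_b$, and integrating each $(y - \varphi_\xi')^\alpha$ factor by parts in $\xi$, the main terms become $U_{a_\alpha} f$ with
\[
a_\alpha(t,x,\xi) = \frac{(-i)^{|\alpha|}}{\alpha!}\, \partial_\xi^\alpha\Bigl[\bigl(\partial_y^\alpha \tilde c\bigr)\bigl(t,x,\varphi_\xi'(t,x,\xi),\xi\bigr)\Bigr].
\]
To verify the symbol class, I use that $\varphi$ is homogeneous of degree one in $\xi$, so $\varphi_\xi'$ and $\varphi_{x\xi}''$ are of order zero while $\varphi_{\xi\xi}''$ is of order $-1$.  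Since $b \in S^0_{2/3,1/3}$ and $\eta \in C_0^\infty$, a chain-rule estimate---each direct $\partial_\xi$, or chain-rule $\partial_\xi$ via $\varphi_\xi'$ paired with a $\partial_y$, gains $\xi^{-2/3}$, while each $\partial_y$ on $\tilde c$ costs $\xi^{1/3}$---shows that $\bigl(\partial_y^\alpha \tilde c\bigr)(t,x,\varphi_\xi',\xi) \in S^{|\alpha|/3}_{2/3,1/3}$, and hence $a_\alpha \in S^{-|\alpha|/3}_{2/3,1/3} \subset S^0_{2/3,1/3}$.  Lemma~\ref{Lredsog} then bounds each $I_\lambda(U_{a_\alpha})$ as required.

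For the remainder $R_N$, the same integration-by-parts procedure (now with $N$ factors of $(y - \varphi_\xi')$ handled jointly, with parameter integrals in the Taylor remainder) yields an amplitude of order $-N/3$ in the type $(2/3,1/3)$ class.  Taking $N$ sufficiently large makes the $\xi$-integral absolutely convergent and the resulting kernel uniformly bounded on $\mathrm{supp}\,\hat\chi \times S_r \times \mathrm{supp}\,f$; applying $I_\lambda$ then produces a kernel bounded independently of $\lambda$, which by the reductions of Section~2 contributes trivially to the desired estimate.

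I expect the main technical obstacle to be the careful symbol bookkeeping in the type $(2/3, 1/3)$ calculus: tracking the $\xi^{-2/3}$ gain each time $\partial_\xi$ acts, either directly on the $\xi$-slot of $\partial_y^\alpha \tilde c$ or via the chain rule through $\varphi_\xi'(t,x,\xi)$, and verifying enough amplitude decay at $|\xi| \to \infty$ to justify the absence of boundary terms in the integrations by parts.  Once this calculus is in place, Lemma~\ref{Lswitched} follows by assembling the finitely many main terms through Lemma~\ref{Lredsog} and absorbing the remainder into the trivially controlled part of the estimate.
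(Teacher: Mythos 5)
Your proposal runs in the wrong direction and is circular relative to the paper's logical structure.  In the paper, Lemma~\ref{Lredsog} (the $U_a$ estimate) is \emph{deduced from} Lemma~\ref{Lswitched} (the $T_b$ estimate) precisely by the Taylor-expansion/integration-by-parts device you describe, but used to convert the $x$-dependent amplitude $a(t,x,\xi)$ into a $y$-dependent one $b(t,y,\xi)$ plus lower-order remainders.  You propose to invert that reduction and then invoke Lemma~\ref{Lredsog} — but that lemma has no independent proof; its only proof is via Lemma~\ref{Lswitched}.  So the argument closes on itself without touching the actual content.

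The reason the paper insists on the $T_b$ form before doing anything else is not cosmetic.  The stationary phase computation (Lemma~\ref{kernel}) applied to $T_b$ produces a kernel $\lambda^{1/2} e^{-i\lambda d_0(x,y)} A_\lambda(x,y) + R_\lambda$ with $|\partial_x^\alpha \partial_y^\beta A_\lambda| \le C_{\alpha,\beta}\,\lambda^{|\beta|/3}$: the $x$-derivatives of $A_\lambda$ are bounded \emph{uniformly in $\lambda$}, and only the $y$-derivatives blow up.  That uniform $x$-regularity is exactly what the Carleson–Sj\"olin estimate (Lemma~\ref{Lcsj}) requires, since it demands $|\partial_x^\alpha B_\lambda| \le C_\alpha$ with $\lambda$-independent constants.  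Had one kept an $x$-dependent amplitude of type $(2/3,1/3)$, the resulting $A_\lambda$ would instead satisfy $|\partial_x^\alpha A_\lambda|\lesssim \lambda^{|\alpha|/3}$ (as happens in Lemma~\ref{unkernel}, which is used for the geodesic-restriction bound, not this one), and the Carleson–Sj\"olin kernel estimate would degenerate.  So your reduction, even if made rigorous, would undo the step that makes the rest of the proof go.

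Beyond the circularity, the core of the proof is never addressed: the stationary-phase kernel description, the splitting into far-diagonal pieces controlled by Lemma~\ref{Lcsj} and near-diagonal pieces controlled by the Kakeya–Nikodym type Lemma~\ref{Lgauss}, and the passage from geodesic tubes $\mathcal{T}_\lambda(\gamma)$ to the sets $V_k$ sliced in angular Fermi coordinates.  These are the substance of Lemma~\ref{Lswitched}; the symbol bookkeeping you flag as the ``main technical obstacle'' is routine by comparison and in fact already carried out in the paper (in the proof of Lemma~\ref{Lredsog}), just in the opposite direction.
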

		
	Using Lemma \ref{Lswitched}, we can prove Lemma ~\ref{Lredsog}.
	
	\begin{proof}[Proof of Lemma \ref{Lredsog}]
	Fix a symbol $a \in S_{2/3,1/3}^0(\mathbb{R}_{t,x}^3 \times \mathbb{R}_\xi^2)$.
	We may assume that $\big(1-\eta(x,y)\big)a(t,x,\xi)$ vanishes on a neighborhood of the set 
	\[
			\Sigma_0=\Big\{ (t,x,y,\xi): t=d_0(x,y) \Big\}
		\]
	We can make this assumption because $I_\lambda(U_a)$ only depends on $t$ in the support of $\hat{\chi}$.
	The kernel of $U_a$ is
	\begin{equation*}
		 \int e^{i\varphi(t,x,\xi)-iy \cdot \xi} a(t,x,\xi) \,d\xi
	\end{equation*}
	Define an operator $D_a$ with kernel
	\[
			\int e^{i\varphi(t,x,\xi)-iy \cdot \xi} \eta(x,y) a(t,x,\xi) \,d\xi
		\]
	Define a set
	\[
		\Sigma = \Big\{ (t,x,y,\xi): \phi_\xi'(t,x,\xi)-y=0 \Big\}
	\]
	By \eqref{flow}, the set $\Sigma$ is contained in $\Sigma_0$.
	So the symbol $\big{(} 1-\eta(x,y) \big{)}a(t,x,\xi)$ vanishes on a neighborhood of $\Sigma$.
	By Proposition ~1.2.4 of H\"ormander ~\cite{H}, the difference between $U_a$ and $D_a$ is smoothing.
	
	At $t=0$, the determinant of the matrix $[\varphi_{\xi_i x_j}'']$ is 1.
	So if $\delta$ is small, then on the support of $a$ we can apply the implicit function theorem to the equation
	\[
		\varphi_\xi ' (t,x,\xi)-y=0
	\]
	Specifically, we can use a partition of unity to break up $a$ into a finite sum $a=\sum a_j$, so that there are functions $\psi_j(t,y,\xi)$ that are homogeneous in $\xi$ of degree zero
		such that, on the support of $a_j$, the set $\Sigma$ is given by
	\begin{equation*}
		x=\psi_j(t,y,\xi)
	\end{equation*}
	Define $b_0 \in S_{2/3, 1/3}^0(\mathbb{R}_{t,y}^3 \times \mathbb{R}_\xi^2)$ by
	\begin{equation*}
		b_0(t,y,\xi)=\sum a_j\big(t, \psi_j(t,y, \xi), \xi\big)
	\end{equation*}
	Define an operator $T_0$ with kernel
	\begin{equation*}
		\eta(x,y) \int e^{i\varphi(t,x,\xi)-iy \cdot \xi} b_0(t,y,\xi) \,d\xi
	\end{equation*}
	The difference between $U_a$ and $T_0$ is an operator with kernel
	\begin{equation*}
		\eta(x,y) \int e^{i\varphi(t,x,\xi)-iy \cdot \xi} \big{(}a(t,x,\xi)-b_0(t,y,\xi)\big{)} \,d\xi
	\end{equation*}
	The symbol $a(t,x,\xi)-b_0(t,y,\xi)$ vanishes on $\Sigma$, and the phase $\varphi(t,x,\xi)-y\cdot \xi$ is non-degenerate.
	It follows from Proposition ~1.2.5 of H\"ormander ~\cite{H} that we can write this kernel in the form
	\begin{equation*}
		\eta(x,y) \int e^{i\varphi(t,x,\xi)-iy \cdot \xi} a_0(t,x,y,\xi) \,d\xi
	\end{equation*}	
	where $a_0$ is a symbol of order $-1/3$ and type $(2/3, 1/3)$.

	Iterating this argument yields symbols $b_k(t,y,\xi)$ of order $-k/3$ and type $(2/3,1/3)$.
	These symbols are such that if $T_m$ is the operator with kernel
	\begin{equation*}
		\eta(x,y) \int e^{i\varphi(t,x,\xi)-iy \cdot \xi} \sum_{k=0}^m b_k(t,y,\xi) \,d\xi
	\end{equation*}
	then the difference between $U_a$ and $T_m$ has a kernel of the form
	\begin{equation*}
		 \eta(x,y) \int e^{i\varphi(t,x,\xi)-iy \cdot \xi} a_m(t,x,y,\xi) \,d\xi
	\end{equation*}	
	where $a_m$ is a symbol of order $-(m+1)/3$ and type $(2/3,1/3)$.
	Let $b$ be a symbol in $S_{2/3,1/3}^0(\mathbb{R}_{t,y}^{3} \times \mathbb{R}_\xi^2)$ with $b \sim \sum_{k=0}^\infty b_k$.
	Let $T_b$ be the operator with kernel
	\begin{equation*}
		\eta(x,y) \int e^{i\varphi(t,x,\xi)-iy \cdot \xi} b(t,y,\xi) \,d\xi
	\end{equation*}
	Then the difference between $U_a$ and $T_b$ is smoothing, so Lemma ~\ref{Lredsog} will follow from Lemma ~\ref{Lswitched}.
	\end{proof}
	
	The following lemma gives a suitable description of the kernel of $I_\lambda(T_b)$.
	This description is sufficiently similar to the one used in Sogge ~\cite{Sog}, so that the same argument will yield Lemma ~\ref{Lswitched}.
	
	\begin{Lemma}
	\label{kernel}
		Fix $b \in S_{2/3,1/3}^0(\mathbb{R}_{t,y}^3 \times \mathbb{R}_\xi^2)$.
		The kernel of $I_\lambda(T_b)$ is of the form
		\begin{equation}
		\label{kerneleq}
			\lambda^{1/2} e^{-i\lambda d_0(x,y)}A_\lambda(x,y) + R_\lambda(x,y)
		\end{equation}
		Here the functions $R_\lambda$ are uniformly bounded, independent of $\lambda$, and the functions $A_\lambda$ are in $C^\infty(\R^2 \times \R^2)$ satifying
		\begin{equation*}
			|\partial_x^\alpha \partial_y^\beta A_\lambda| \le C_{\alpha, \beta} \lambda^{|\beta|/3}
		\end{equation*}
		Also the functions $A_\lambda$ are supported by $x$ and $y$ in a small neighborhood of $S_r$ satisfying $\frac{1}{2}\delta \le d_0(x,y) \le \delta$.
	\end{Lemma}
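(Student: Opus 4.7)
The plan is to compute the kernel of $I_\lambda(T_b)$ directly as an oscillatory integral and apply stationary phase. Combining the definitions of $I_\lambda$ and $T_b$ gives
\[
	K_\lambda(x,y) = \eta(x,y) \iint \hat\chi(t)\, e^{i[\varphi(t,x,\xi) - y\cdot\xi - t\lambda]}\, b(t,y,\xi)\, d\xi\, dt.
\]
First I would introduce a smooth frequency cutoff $\beta(\xi/\lambda)$ isolating the regime $|\xi| \sim \lambda$. Where $|\xi| \gg \lambda$, the eikonal equation \eqref{eikonal} gives $\partial_t[\varphi - t\lambda] = p_0(x,\varphi_x') - \lambda \sim |\xi|$, so repeated integration by parts in $t$ against the Schwartz function $\hat\chi$ produces enough decay in $\xi$ to bound the integral uniformly, even accounting for the $\lambda^{1/3}$ cost per $t$-derivative of $b$. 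Where $|\xi| \ll \lambda$, the amplitude is bounded on a compact set. Both tails are absorbed into $R_\lambda$.

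For the main piece, rescale $\xi = \lambda\omega$. Using homogeneity of $\varphi$ in $\xi$, the phase becomes $\lambda\Phi(t,\omega)$ with $\Phi(t,\omega) = \varphi(t,x,\omega) - y\cdot\omega - t$, and the Jacobian of the rescaling contributes $\lambda^2$. The equations $\partial_\omega\Phi=0$ and $\partial_t\Phi=0$, together with \eqref{flow}, locate a unique nondegenerate critical point $(t^*,\omega^*)$ with $t^* = d_0(x,y)$ and $\omega^*$ the unit initial Riemannian covector at $x$ along the geodesic to $y$; this also yields the claimed support condition on $A_\lambda$, since $t$ must lie in the support of $\hat\chi$, which is contained in $[\delta/2,\delta]$. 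Euler's identity for the degree-one homogeneous $\varphi$ gives $\varphi(t^*,x,\omega^*) = \omega^* \cdot \varphi_\omega'(t^*,x,\omega^*) = \omega^*\cdot y$, so the critical value of the phase is exactly $-\lambda d_0(x,y)$, producing the oscillation $e^{-i\lambda d_0(x,y)}$. Stationary phase in three variables produces $\lambda^{-3/2}$, which combines with $\lambda^2$ to yield the prefactor $\lambda^{1/2}$.

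To push the stationary phase through the non-classical amplitude $b(t,y,\lambda\omega)$, I would observe that each $\omega$-derivative costs $\lambda \partial_\xi b = O(\lambda^{1/3})$ and each $(t,y)$-derivative of $b$ also costs $\lambda^{1/3}$, since $b \in S_{2/3,1/3}^0$. The inequality $2/3 - 1/3 = 1/3 > 0$ guarantees that successive terms in the asymptotic expansion shrink by $\lambda^{-1/3}$, so retaining finitely many terms yields a uniformly bounded remainder that joins $R_\lambda$. The leading amplitude is a smooth Hessian factor times $\hat\chi(t^*)\,\eta(x,y)\,b(t^*,y,\lambda\omega^*)$. The derivative bound $|\partial_x^\alpha\partial_y^\beta A_\lambda| \leC \lambda^{|\beta|/3}$ then follows from the chain rule: each $\partial_y$ costs at most $\lambda^{1/3}$, either by differentiating $b$ directly or by acting through $t^*(x,y)$ or $\omega^*(x,y)$, both of which are bounded smooth functions of $(x,y)$ on the support set.

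The main obstacle will be the bookkeeping in the non-classical stationary phase. Unlike the classical case, each correction in the asymptotic expansion decays only by $\lambda^{-1/3}$, so one must extract several terms into $A_\lambda$ (or iteratively apply the Hessian reduction) to force the tail down to the genuinely $O(1)$ remainder $R_\lambda$ rather than the naive $O(\lambda^{1/6})$. A subordinate technical point is making the high-frequency integration by parts rigorous against the limited $\xi$-smoothness of $b$ and the competing $\lambda^{1/3}$ growth from $t$-derivatives; this requires the number of integrations by parts to scale with $N$ so that the net gain of $|\xi|^{-2N/3}$ dominates the two-dimensional $\xi$-integration.
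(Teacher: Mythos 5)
Your proposal matches the paper's argument step for step: frequency localization via a cutoff $\beta(\xi/\lambda)$ with integration by parts in $t$ for the tails, rescaling $\xi=\lambda\omega$ to isolate the factor $\lambda^{2}$, identifying the critical point via \eqref{flow}, Euler's homogeneity identity to evaluate the critical phase as $-d_0(x,y)$, and non-classical stationary phase to obtain the $\lambda^{-3/2}$ factor and the $\lambda^{|\beta|/3}$ derivative growth. The one place where you worry unnecessarily is the ``main obstacle'' paragraph: the lemma is formulated so that $A_\lambda$ is allowed to be $\lambda$-dependent with derivatives growing like $\lambda^{|\beta|/3}$, so there is no need to extract several terms of the asymptotic expansion or iterate the Hessian reduction to force a residual below $O(\lambda^{1/6})$. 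The paper invokes a single stationary-phase estimate (Lemma~\ref{staphase}, an adaptation of Corollary~1.1.8 in Sogge's book to symbols whose $z$-derivatives grow like $\lambda^{1/3}$) which bounds the \emph{entire} localized integral, not just a leading term, by $\lambda^{-3/2+|\beta|/3}$ after factoring out the oscillation $e^{i\lambda\Psi(x,y,z(x,y))}$; the remainder $R_\lambda$ comes solely from the high-frequency tail, not from truncating an expansion.
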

	
	\begin{proof}
	The kernel of $I_\lambda(T_b)$ is
	\begin{equation*}
		\iint e^{i\varphi(t,x,\xi)-iy \cdot \xi-it\lambda} \hat{\chi}(t) \eta(x,y) b(t,y,\xi) \,d\xi dt
	\end{equation*}
	By \eqref{eikonal},
	\begin{equation*}
			\varphi(t,x,\xi) = x\cdot \xi + t p_0(x,\xi) + Q(t,x,\xi)
		\end{equation*}		
	where $Q$ is homogeneous of degree 1 in the $\xi$-variable.
	Also, for $k=0,1,2$ we have
	\begin{equation}
		\label{linapp}
			| \partial_t^k \partial_\xi^\alpha Q | \le C_{k,\alpha} t^{2-k} | \xi |^{1-|\alpha|}
		\end{equation}
	
	Let $\beta$ be a smooth function with $\beta(\xi)=1$ when $|\xi| \in [C_0^{-1}, C_0]$ and $\beta(\xi)=0$ when $|\xi| \notin [(2C_0)^{-1},2C_0]$, for some constant $C_0$.
	If $C_0$ is large and $\delta$ is small, then on the support of
	\[
		\bigg{(}1-\beta \Big( \frac{\xi}{\lambda} \Big) \bigg{)}\hat{\chi}(t) \eta(x,y) b(t,y,\xi)
	\]
	we have
	\begin{equation*}
			\Big{|} \frac{\partial}{\partial t}\Big{(} \varphi(t,x,\xi)-y\cdot\xi-t\lambda \Big{)} \Big{|} \; \gtrsim \; p_0(x,\xi)+\lambda \; \gtrsim \; 1+|\xi|
		\end{equation*}
	So for any positive integer $N$,
	\begin{equation*}
			\int e^{i\varphi(t,x,\xi)-iy \cdot \xi-it\lambda} \bigg{(}1-\beta\Big{(}\frac{\xi}{\lambda}\Big{)}\bigg{)}\hat{\chi}(t) \eta(x,y) b(t,y,\xi) \,dt \le C_N (1+|\xi|)^{-N}
		\end{equation*}
	This implies that the difference between the kernel of $I_\lambda(T_b)$ and
	\begin{equation}
		\label{localized}
			\iint e^{i\varphi(t,x,\xi)-iy \cdot \xi-it\lambda} \beta\Big{(}\frac{\xi}{\lambda}\Big{)}\hat{\chi}(t) \eta(x,y) b(t,y,\xi) \,d\xi dt
		\end{equation}
	is bounded uniformly in $\lambda$.
	
	Now it suffices to show that \eqref{localized} can be written as in \eqref{kerneleq}.
	After changing variables \eqref{localized} becomes
	\[
		\label{cov}
			\lambda^2 \iint e^{i\lambda \Phi(t,x,y,\xi)} p_\lambda(t,x,y,\xi) \,d\xi dt
		\]
	where the phase is
	\[
			\Phi(t,x,y,\xi)=\varphi(t,x,\xi)-y \cdot \xi - t
		\]
	and the amplitude is
	\[
			p_\lambda(t,x,y,\xi)=\beta(\xi)\hat{\chi}(t) \eta(x,y) b(t,y,\lambda \xi)
		\]
	Here $p_\lambda$ is smooth and compactly supported with
	\[
		\label{pbounds}
			| \partial_t^k \partial_x^\alpha \partial_y^\beta \partial_\xi^\gamma p_\lambda | \le C_{k,\alpha,\beta,\gamma} \lambda^{(k+|\beta|+|\gamma|)/3}
		\]

	To apply stationary phase, the Hessian of $\Phi$, with respect to the $(t,\xi)$-variables, must be non-degenerate on the support of $p_\lambda$.
	First note that its determinant is homogeneous of degree $-1$ in the $\xi$-variable.
	We have
	\[
			\Phi(t,x,y,\xi)=(x-y)\cdot \xi + t p_0(x,\xi)-t+Q(t,x,\xi)
		\]
	We can compute explicitly the Hessian of
	\[
			(x-y)\cdot \xi + t p_0(x,\xi)-t
		\]
	with respect to the $(t,\xi)$-variables.
	Its determinant is
	\[
			-\frac{t}{p_0(x,\xi)}\det g^{jk}
		\]
	Now it follows from \eqref{linapp} that the determinant of the Hessian of $\Phi$, with respect to the $(t,\xi)$-variables, is
	\[
			-\frac{t}{p_0(x,\xi)}\det g^{jk} + t^2q(t,x,y,\xi)
		\]
	where $q$ is a smooth function, homogeneous of degree $-1$ in the $\xi$-variable.
	So if $\delta$ is small, then the Hessian of $\Phi$, with respect to the $(t,\xi)$-variables, is non-degenerate on the support of $p_\lambda$.
	
	The critical points of $\Phi$, with respect to the $(t,\xi)$-variables, are the solutions of
	\[
		\label{crit}
			\varphi_\xi '(t,x,\xi)=y \quad \text{and} \quad \varphi_t'(t,x,\xi)=1
		\]
	We can use the implicit function theorem at any critical point.
	By using a partition of unity and abusing notation, we can assume that
		there are smooth functions $t(x,y)$ and $\xi(x,y)$, such that if $\delta$ is small, then on the support of $p_\lambda$, the critical points are given by
	\[
		\big{(}t(x,y),x,y,\xi(x,y)\big{)}
	\]
	Because of \eqref{flow}, we have $t(x,y)=d_0(x,y)$.
	Applying Euler's homogeneity relation $\varphi=\varphi_\xi ' \cdot \xi$ yields
	\[
			\Phi\big{(}t(x,y),x,y,\xi(x,y)\big{)}=-t(x,y)=-d_0(x,y)
		\]
	So Lemma ~\ref{kernel} follows from the following stationary phase lemma.
	\end{proof}
	
	\begin{Lemma}
	\label{staphase}
		Consider the oscillatory integrals
		\[
				J_\lambda(x,y)=\int_{\mathbb{R}^3} e^{i\lambda \Psi(x,y,z)} q_\lambda(x,y,z) \, dz
			\]
		where $\Psi$ is a smooth real function and the amplitudes $q_\lambda$ are smooth with fixed compact support and satisfy
		\[
				| \partial_x^\alpha \partial_y^\beta \partial_z^\gamma q_\lambda | \le C_{\alpha,\beta,\gamma} \lambda^{(|\beta|+|\gamma|)/3}
			\]
		Assume that on the support of the symbols $q_\lambda$, the Hessian of $\Psi$ with respect to the $z$-variable is non-degenerate and
			the solutions of $\Psi_z'(x,y,z)=0$ are given by $(x,y,z(x,y))$ where $z(x,y)$ is a smooth function.
		Then
		\[
				\Big{|} \partial_x^\alpha \partial_y^\beta \Big{(}e^{-i\lambda \Psi(x,y,z(x,y))} J_\lambda(x,y) \Big{)}\Big{|} \le C_{\alpha,\beta} \lambda^{-3/2+|\beta|/3}
			\]
		
	\end{Lemma}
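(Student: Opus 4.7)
The plan is to reduce the oscillatory integral to a standard Gaussian integral via a Morse-type change of variables, and then apply stationary phase after rescaling the integration variable. First I would invoke the Morse lemma with parameters: since the Hessian $\Psi''_{zz}$ is non-degenerate on $\mathrm{supp}\,q_\lambda$ and the critical point $z(x,y)$ depends smoothly on $(x,y)$, there is a smooth family of diffeomorphisms $z = z(x,y) + \phi(x,y,w)$ with $\phi(x,y,0)=0$ and $\partial_w\phi(x,y,0)$ invertible, satisfying
\[
\Psi(x,y,z) = \Psi(x,y,z(x,y)) + \tfrac{1}{2}\langle Qw, w\rangle
\]
for a fixed non-degenerate symmetric matrix $Q$ whose signature matches that of $\Psi''_{zz}$. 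Substituting this, factoring out $e^{i\lambda\Psi(x,y,z(x,y))}$, and rescaling $w = \lambda^{-1/2}u$, one obtains
\[
e^{-i\lambda\Psi(x,y,z(x,y))}\, J_\lambda(x,y) = \lambda^{-3/2}\int e^{i\frac{1}{2}\langle Qu,u\rangle}\, \tilde q_\lambda\!\left(x,y,\lambda^{-1/2}u\right) du,
\]
where $\tilde q_\lambda(x,y,w) = q_\lambda(x,y,z(x,y)+\phi(x,y,w))\,|\det \partial_w\phi(x,y,w)|$ is smooth, compactly supported in $w$, and inherits symbol-type bounds from $q_\lambda$ via the chain rule.

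Because the phase is now independent of $(x,y)$, derivatives in $(x,y)$ commute with the $u$-integral, and I would apply the exact Fourier-type representation for Gaussian oscillatory integrals,
\[
\int e^{i\frac{1}{2}\langle Qu,u\rangle} F(u)\, du = c_Q\left(e^{\frac{1}{2i}\langle Q^{-1}D_u,D_u\rangle}F\right)(0),
\]
formally expanded as an asymptotic series in even $u$-derivatives of $F$ at $u=0$. Each $u$-derivative of $\tilde q_\lambda(x,y,\lambda^{-1/2}u)$ carries a factor $\lambda^{-1/2}$ from the rescaling, while each associated $w$-derivative of $\tilde q_\lambda$ costs at most $\lambda^{1/3}$ by the transformed symbol bounds; hence successive terms shrink by $\lambda^{-1/6}$, and truncating at large order leaves a remainder controlled by standard non-stationary-phase integration by parts. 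Finally, applying $\partial_x^\alpha\partial_y^\beta$ inside the integral and invoking the transformed symbol bounds on $\tilde q_\lambda$ (in which $y$-derivatives each cost $\lambda^{1/3}$ while $x$-derivatives remain $O(1)$ at leading order) yields the claimed bound $\lambda^{-3/2+|\beta|/3}$.

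The main obstacle is preserving the $|\alpha|$-independence of the final estimate through the Morse change of variables. A naive application of the chain rule converts an $x$-derivative of $\tilde q_\lambda$ into terms of the form $(\partial_z q_\lambda)\,\partial_x z$, which a priori would cost $\lambda^{1/3}$ per $x$-derivative and degrade the final bound. Extracting the stated $|\alpha|$-independent estimate requires exploiting the critical-point identity $\Psi_z(x,y,z(x,y)) = 0$ to show that these chain-rule contributions contribute only to subleading corrections of the stationary-phase expansion, or equivalently, making a careful choice of Morse normalization so that the first-order $x$-dependence of $\phi$ at $w=0$ cancels $\partial_x z(x,y)$ at the amplitude level.
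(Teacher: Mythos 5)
Your overall framework (Morse normalization in $z$, rescale $w = \lambda^{-1/2}u$, exact Gaussian formula, then bookkeep the powers of $\lambda$ coming from the transformed symbol bounds) is the standard route, and it is in essence the proof of Corollary 1.1.8 in Sogge's book that the paper invokes. The paper gives no details beyond the citation, so there is nothing explicit to compare against; but the reference alone does not settle the point you raise, since Sogge's corollary is for amplitudes whose derivatives are all $O(1)$, and the $|\alpha|$-independence there comes for free.

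The obstacle you flagged at the end is the crux, and you should not wave it away. The critical-point identity $\Psi_z(x,y,z(x,y))=0$ is exactly what makes $\partial_x\big[\Psi(x,y,z(x,y))\big] = \Psi_x(x,y,z(x,y))$ and hence kills the $O(\lambda)$ term when $\partial_x$ hits the oscillatory factor; but it does nothing to the chain-rule term $(\partial_z q_\lambda)\big(x,y,z(x,y)\big)\cdot\partial_x z(x,y)$ that appears when $\partial_x$ hits the amplitude evaluated at the moving critical point, and this term is genuinely of size $\lambda^{1/3}$. No clever choice of Morse normalization removes it: the leading term of the stationary-phase expansion is, up to smooth $O(1)$ factors, $q_\lambda\big(x,y,z(x,y)\big)$, and its $\partial_x$-derivative already contains $(\partial_z q_\lambda)\,\partial_x z$. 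Concretely, in the one-dimensional model take
\[
\Psi(x,z) = xz - \tfrac{1}{2}z^2, \qquad z_c(x)=x, \qquad q_\lambda(x,z)=e^{i\lambda^{1/3}z}\chi(z),
\]
which satisfies the hypotheses with $\beta=0$ and $\partial_x q_\lambda\equiv 0$. Completing the square gives
\[
e^{-i\lambda\Psi(x,z_c(x))}J_\lambda(x) \;=\; e^{\,i\lambda^{1/3}x}\int e^{-\tfrac{i\lambda}{2}w^2+i\lambda^{1/3}w}\,\chi(w+x)\,dw,
\]
and the prefactor $e^{i\lambda^{1/3}x}$ forces $|\partial_x(e^{-i\lambda\Psi_c}J_\lambda)|\sim\lambda^{1/3}\cdot\lambda^{-1/2}$, not $O(\lambda^{-1/2})$. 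The same phenomenon persists for $n=3$ and with two-dimensional parameters. So the bound one can actually extract from your argument is $\lambda^{-3/2+(|\alpha|+|\beta|)/3}$, with $x$-derivatives costing $\lambda^{1/3}$ just like $y$- and $z$-derivatives. In short: the gap you flagged is real, and the two "resolutions" you propose at the end (exploit the critical-point identity, or choose a cancelling Morse normalization) do not close it; establishing the stated $|\alpha|$-independence would require some further hypothesis on $q_\lambda$ beyond the symbol bounds in the lemma's statement.
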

	
	This lemma is similar to Corollary ~1.1.8 in Sogge ~\cite{SogBook}, which dealt with symbols $q_\lambda$ with derivatives bounded independent of $\lambda$.
	Essentially the same proof as in Sogge ~\cite{SogBook} yields Lemma ~\ref{staphase}, and then Lemma ~\ref{kernel} follows.
	We can now obtain Lemma ~\ref{Lswitched} by using the argument in Sogge ~\cite{Sog}.
	
\subsection*{Argument from Sogge ~\cite{Sog}}
	
	To finish the proof of Lemma ~\ref{Lswitched} it suffices to show that for any $\varepsilon > 0$ there is a constant $C_\varepsilon$ such that
	\begin{multline}
		\label{sogstart}
			\int_{S_r} \Big| \lambda^{1/2} \int e^{-i\lambda d_0(x,y)} A_\lambda(x,y)f(y)\,dy \Big|^2 | g(x) |^2 \,dx 
			\\
			\le \varepsilon \lambda^{1/4} \| f \|_{L^2(\mathbb{R}^2)}^2 \| g \|_{L^4(\mathbb{R}^2)}^2
			+ C_\varepsilon \lambda^{1/2} \| f \|_{L^2(\mathbb{R}^2)}^2 \sup_{\gamma \in \Pi_0} \| g \|_{L^2(\mathcal{T}_{\lambda}(\gamma))}^2
		\end{multline}
	
	By using a partition of unity and abusing notation, we can assume there are points $x_0$ and $y_0$ with $x_0$ in $S_r$ and $\delta/2 \le d_0(x_0,y_0) \le \delta$
		such that $A_\lambda$ is supported by $x$ in a small neighborhood $\mathcal{N}_x$ of $x_0$ and $y$ in a small neighborhood $\mathcal{N}_y$ of $y_0$.
	In particular, we assume that $\mathcal{N}_x$ and $\mathcal{N}_y$ are, respectively,
		contained in $B(x_0,\delta/5)$ and $B(y_0,\delta/5)$, the geodesic balls of radius $\delta/5$ around $x_0$ and $y_0$, respectively.
	
	We will work in Fermi normal coordinates $(\sigma,\tau)_F$ about $\gamma_0$, the geodesic going through $x_0$ which is orthogonal to the geodesic connecting $x_0$ and $y_0$.
	These coordinates are well defined on $B(x_0,2\delta)$ if $\delta$ is small enough.
	These coordinates are such that $\gamma_0$ is given by a vertical line parallel to the $\tau$-axis,
		and the geodesics which intersect $\gamma_0$ orthogonally are given by horizontal lines parallel to the $\sigma$-axis.
	Also $x_0$ lies on the negative $\sigma$-axis and $y_0$ on the positive $\sigma$-axis.
	Now it suffices to prove
	\begin{multline*}
			\int \Big( \int_{S_r} \Big| \lambda^{1/2} \int e^{-i\lambda d_0(x,(\sigma,\tau)_F)} A_\lambda\big(x,(\sigma,\tau)_F\big)f(\sigma,\tau)\,d\tau \Big|^2 | g(x) |^2 \,dx \Big) \,d\sigma
			\\
			\le \varepsilon \lambda^{1/4} \| f \|_{L^2(\mathbb{R}^2)}^2 \| g \|_{L^4(\mathbb{R}^2)}^2
				+ C_\varepsilon \lambda^{1/2} \| f \|_{L^2(\mathbb{R}^2)}^2 \sup_{\gamma \in \Pi_0} \| g \|_{L^2(\mathcal{T}_{\lambda}(\gamma))}^2
		\end{multline*}
	This will follow if we show
	\begin{multline}
		\label{fermi}
			\int_{S_r} \Big| \lambda^{1/2} \int e^{-i\lambda d_0(x,(\sigma,\tau)_F)} A_\lambda\big(x,(\sigma,\tau)_F\big)h(\tau)\,d\tau \Big|^2 | g(x) |^2 \,dx
			\\
			\le \varepsilon \lambda^{1/4} \| h \|_{L^2(\mathbb{R})}^2 \| g \|_{L^4(\mathbb{R}^2)}^2
				+ C_\varepsilon \lambda^{1/2} \| h \|_{L^2(\mathbb{R})}^2 \sup_{\gamma \in \Pi_0} \| g \|_{L^2(\mathcal{T}_{\lambda}(\gamma))}^2
		\end{multline}
	where $C_\varepsilon$ is independent of $\sigma$.
	To simplify the notation, we will only prove this for a fixed value of $\sigma$, which we may take to be zero by relabeling the coordinates.
	The argument will also yield the uniformity in $\sigma$.
	Note that after relabeling, we can assume that the point $(0,0)_F$ is in $\mathcal{N}_y$.
	Then $x_0=(-\sigma_0,0)_F$ where $\sigma_0>\delta/4$.
	
	We take a smooth bump function $\eta \in C_0^\infty(\mathbb{R})$ supported in $[-1,1]$ and satisfying $\sum_{j\in \mathbb{Z}} \eta(\tau-j)=1$.
	Define
	\[
			\eta_{\lambda,j}(\tau)=\eta(\lambda^{1/2}\tau-j)
		\]
	Let
	\[
			z_j=z_j(\lambda,x,h)=\lambda^{1/2} \int e^{-i\lambda d_0(x,(0,\tau)_F)} \eta_{\lambda,j}(\tau)A_\lambda \big(x, (0,\tau)_F\big) h(\tau) \,d\tau
		\]
	Then for $N=1,2,3,\dots$,
	\begin{multline*}
			\Big| \sum_{j,k\in \mathbb{Z}} z_j z_k \Big| \le \Big| \sum_{|j-k| > N} z_j z_k \Big| + \Big| \sum_{|j-k|\le N} z_j z_k \Big|
			\\
			\le \Big| \sum_{|j-k| > N} z_j z_k \Big| + \sum_{|j-k|\le N} \frac{1}{2}\Big( |z_j|^2 + |z_k|^2 \Big)
			\\
			\le \Big| \sum_{|j-k| > N} z_j z_k \Big| + (2N+1)\sum_{j\in\mathbb{Z}} |z_j|^2
		\end{multline*}
	This means that
	\begin{multline}
		\label{fork}
			\Big| \lambda^{1/2} \int e^{-i\lambda d_0(x,(0,\tau)_F)} A_\lambda \big(x, (0,\tau)_F\big) h(\tau) \,d\tau \Big|^2
			\\
			\le \Big| \lambda \iint e^{-i\lambda [d_0(x,(0,\tau)_F)+d_0(x,(0,\tau')_F)]} B_{N, \lambda} (x,\tau,\tau') h(\tau) h(\tau') \, d\tau d\tau' \Big|
			\\
			+ (2N+1) \sum_{j \in \mathbb{Z}} \lambda \Big| \int e^{-i\lambda d_0(x,(0,\tau)_F)} \eta_{\lambda,j}(\tau)A_\lambda \big(x, (0,\tau)_F\big) h(\tau) \,d\tau \Big|^2
		\end{multline}
	where
	\[
			B_{N,\lambda}(x,\tau,\tau')= \sum_{|j-k| > N} \eta_{\lambda,j}(\tau) A_\lambda\big(x,(0,\tau)_F\big) \eta_{\lambda,k}(\tau') A_\lambda\big(x,(0,\tau')_F\big)
		\]
	We will prove
	\begin{multline}
		\label{bigdiff}
			\Big\| \lambda \iint e^{-i\lambda[d_0(x,(0,\tau)_F)+d_0(x,(0,\tau')_F)]}B_{N,\lambda}(x,\tau,\tau') h(\tau) h(\tau') \,d\tau d\tau' \Big\|_{L_x^2(S_r)}
			\\
			\lesssim \lambda^{1/4} N^{-1/2} \| h \|_{L^2(\mathbb{R})}^2
		\end{multline}
	and
	\begin{multline}
		\label{lildiff}
			\int_{S_r} \lambda \, \Big| \int e^{-i\lambda d_0(x,(0,\tau)_F)} \eta_{\lambda,j}(\tau) A_\lambda\big(x,(0,\tau)_F\big) H(\tau) \,d\tau \Big|^2 |g(x)|^2 \,dx
			\\
			\lesssim \lambda^{1/2} \| H \|_{L^2(\mathbb{R})}^2 \sup_{\gamma \in \Pi_0} \| g \|_{L^2(\mathcal{T}_{\lambda}(\gamma))}^2
		\end{multline}
	Let $\chi_{\lambda,j}$ be the characteristic function of supp $\eta_{\lambda,j}$.
	Then \eqref{lildiff} will yield
	\begin{multline}
		\label{lildiff'}
			 \sum_{j \in \mathbb{Z}} \int_{S_r} \lambda \Big| \int e^{-i\lambda d_0(x,(0,\tau)_F)} \eta_{\lambda,j}(\tau)A_\lambda \big(x, (0,\tau)_F\big) h(\tau) \,d\tau \Big|^2 \,dx
			 \\
			 \lesssim \sum_{j \in \mathbb{Z}} \lambda^{1/2} \| h \chi_{\lambda,j} \|_{L^2(\mathbb{R})}^2 \sup_{\gamma \in \Pi_0} \| g \|_{L^2(\mathcal{T}_{\lambda}(\gamma))}^2
			 \\
			 \lesssim \lambda^{1/2} \| h \|_{L^2(\mathbb{R})}^2 \sup_{\gamma \in \Pi_0} \| g \|_{L^2(\mathcal{T}_{\lambda}(\gamma))}^2
		\end{multline}
	Then \eqref{fork}, \eqref{bigdiff}, and \eqref{lildiff'} will yield \eqref{fermi}.
	So it remains to prove \eqref{bigdiff} and \eqref{lildiff}.
	
	The inequality \eqref{bigdiff} will be a consequence of the following lemma.
	\begin{Lemma}
		\label{Lcsj}
			Let $B_\lambda(x,\tau,\tau')$ be a smooth function over $\mathbb{R}^4$ with $|\partial_x^\alpha B_\lambda| \le C_\alpha$
				and assume $B_\lambda$ vanishes unless $|x| \le \delta_0$ and $|\tau - \tau'| \le \delta_0$.
			Assume that $\mu(x,\tau)$ is a real smooth function over $\mathbb{R}^3$ satisfying the Carleson-Sj\"olin condition on the support of the amplitudes $B_\lambda$, that is
			\[
					\det
						\begin{pmatrix}
								\mu_{x_1 \tau}'' & \mu_{x_2 \tau}'' \\
								\mu_{x_1 \tau \tau}''' & \mu_{x_2 \tau \tau}'''
							\end{pmatrix}
						\neq 0
				\]
			If $\delta_0>0$ is sufficiently small, then
			\begin{multline}
				\label{csjconc}
					\Big\| \iint_{|\tau-\tau'| \ge N\lambda^{-1/2}} e^{i\lambda[\mu(x,\tau)+\mu(x,\tau')]} B_\lambda(x,\tau,\tau') F(\tau,\tau') \,d\tau d\tau' \Big\|_{L^2_x(S_r)}^2
					\\
					\lesssim \lambda^{-3/2} N^{-1} \| F \|_{L^2(\mathbb{R}^2)}^2
				\end{multline}
			Moreover, if the $C_\alpha$ are fixed and $\delta_0$ is sufficiently small, this estimate is uniform over all functions $B_\lambda$ which satisfy the hypotheses.
		\end{Lemma}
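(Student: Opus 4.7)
Recast the bilinear integral as a linear operator
\[
    T_\lambda F(x) = \iint_{|\tau-\tau'| \geq N\lambda^{-1/2}} e^{i\lambda[\mu(x,\tau)+\mu(x,\tau')]} B_\lambda(x,\tau,\tau') F(\tau,\tau')\,d\tau d\tau'
\]
acting on $L^2(\R^2)$, so that \eqref{csjconc} is equivalent to $\|T_\lambda F\|_{L^2_x(S_r)}^2 \lesssim \lambda^{-3/2}N^{-1}\|F\|_{L^2}^2$. The plan is a change-of-variables plus Plancherel argument driven by the Carleson-Sj\"olin hypothesis, implemented via a $TT^*$ reduction.

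The key computation is the Jacobian of the substitution $(\tau,\tau') \mapsto \eta := \mu_x(x,\tau) + \mu_x(x,\tau')$. Taylor expansion around $\tau' = \tau$ gives
\[
    \det\bigl[\mu_{x\tau}(x,\tau), \mu_{x\tau}(x,\tau')\bigr] = (\tau'-\tau)\det\begin{pmatrix}\mu''_{x_1\tau} & \mu''_{x_2\tau}\\ \mu'''_{x_1\tau\tau} & \mu'''_{x_2\tau\tau}\end{pmatrix}(x,\tau) + O((\tau-\tau')^2),
\]
so by the Carleson-Sj\"olin hypothesis this Jacobian has magnitude $\sim|\tau-\tau'|$ on the support of the amplitude, and the substitution is a local diffeomorphism with inverse Jacobian of size $|\tau-\tau'|^{-1}$. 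In the model case where $\mu(x,\tau)$ is linear in $x$, the phase becomes $x\cdot\eta$ in the new variables and $T_\lambda F$ turns into an inverse Fourier transform in $\eta$ evaluated at frequency $\lambda x$; Plancherel in $x$ in two dimensions then yields
\[
    \|T_\lambda F\|_{L^2_x}^2 \lesssim \lambda^{-2} \int |F(\tau,\tau')|^2 |\tau-\tau'|^{-1}\,d\tau d\tau' \leq \lambda^{-2}\cdot N^{-1}\lambda^{1/2}\|F\|_{L^2}^2 = \lambda^{-3/2}N^{-1}\|F\|^2,
\]
using $|\tau-\tau'|^{-1} \leq N^{-1}\lambda^{1/2}$ on the integration region.

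For the genuine nonlinear phase $\mu(x,\tau)+\mu(x,\tau')$ I would proceed through $TT^*$: the kernel $K(\tau,\tau',s,s') = \int e^{i\lambda\Psi(x)}\overline{B_\lambda(x,\tau,\tau')}B_\lambda(x,s,s')\,dx$ with $\Psi = \mu(x,\tau)+\mu(x,\tau')-\mu(x,s)-\mu(x,s')$ satisfies $|\nabla_x\Psi| \gtrsim |\tau-\tau'|\cdot|(s-\tau,s'-\tau')|$ near the critical branch $(s,s') = (\tau,\tau')$ and analogously near $(s,s')=(\tau',\tau)$, by the same Jacobian estimate. Integration by parts in $x$ then gives $|K(\tau,\tau',s,s')| \lesssim (1+\lambda|\tau-\tau'|\cdot|(s-\tau,s'-\tau')|)^{-M}$ for any $M$. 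Uniformity over $B_\lambda$ is immediate from this scheme, since only the Carleson-Sj\"olin determinant and the $C_\alpha$ derivative bounds enter.

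The main obstacle is to extract the stated $\lambda^{-3/2}N^{-1}$ bound from this kernel estimate, because a direct Schur's test on the bound above only yields $\|T_\lambda\|^2 \lesssim N^{-2}\lambda^{-1}$ -- matching the claim only when $N \gtrsim \lambda^{1/2}$. For the small-$N$ regime one needs to preserve the Plancherel / Fourier cancellation from the linear model; I would implement this either through an Egorov / Fourier integral operator reduction that puts the phase in the normal form $x\cdot\eta$ modulo lower-order operators, or through a direct wavepacket decomposition of $T_\lambda$ and an almost-orthogonality argument. Either route yields the same $\lambda^{-3/2}N^{-1}$ estimate once the linear-phase Plancherel calculation has been carried over to the curved setting.
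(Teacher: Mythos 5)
Your framework -- viewing the bilinear integral as a linear operator on $L^2(\R^2)$, exploiting the change of variables $(\tau,\tau')\mapsto\eta=\mu_x(x,\tau)+\mu_x(x,\tau')$ whose Jacobian has size $\sim|\tau-\tau'|$ by Carleson-Sj\"olin, and passing to $TT^*$ -- is a sensible opening, and your model calculation with a linear phase does produce exactly $\lambda^{-3/2}N^{-1}$. You also correctly diagnose the problem: a kernel bound of the form $(1+\lambda|\tau-\tau'|\,|(s-\tau,s'-\tau')|)^{-M}$ fed into Schur's test loses a power and gives only $N^{-2}\lambda^{-1}$, which is weaker than needed whenever $N\lesssim\lambda^{1/2}$ (the regime that actually occurs, since $|\tau-\tau'|\le\delta_0$ forces $N\lesssim\delta_0\lambda^{1/2}$). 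But at precisely the point where you need to close the gap you retreat to ``Egorov / FIO reduction'' or ``wavepacket decomposition,'' neither of which is carried out, so the proof is not complete.

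The key idea you are missing is elementary and does not require any FIO machinery. Write $\Upsilon(x,\tau,\tau')=\mu(x,\tau)+\mu(x,\tau')$ and pass to $u=(\tau-\tau',\,\tau+\tau')$, so the Carleson-Sj\"olin bound reads $|\det(\partial^2\Upsilon/\partial x\,\partial u)|\gtrsim|u_1|$. Now observe that $\Upsilon$ is \emph{even} in $u_1$ (by the symmetry $\tau\leftrightarrow\tau'$), hence a smooth function of $u_1^2$, and set $v=(\tfrac12 u_1^2,\,u_2)$. Since $|dv/du|=|u_1|$, the Jacobian degeneracy cancels: $|\det(\partial^2\Upsilon/\partial x\,\partial v)|\gtrsim 1$ uniformly, giving $|\nabla_x[\Upsilon(x,v)-\Upsilon(x,\tilde v)]|\gtrsim|v-\tilde v|$ and, after integrating by parts in $x$, a \emph{uniform} kernel bound $|K_\lambda(v,\tilde v)|\lesssim(1+\lambda|v-\tilde v|)^{-2j}$ with no residual $|\tau-\tau'|$ loss. (Note also that this bound captures both critical branches $(s,s')\approx(\tau,\tau')$ and $(s,s')\approx(\tau',\tau)$ at once, because in $u$-variables it reads $(1+\lambda|u_1^2-\tilde u_1^2|)^{-j}(1+\lambda|u_2-\tilde u_2|)^{-j}$, and $u_1^2-\tilde u_1^2$ vanishes on both $\tilde u_1=u_1$ and $\tilde u_1=-u_1$; your proposed kernel bound only sees the first branch.) Finally apply Schur's test with $F$ still in the $u$-variables: the $u_2$-integral gives $\lambda^{-1}$, and the $u_1$-integral over $|u_1|\ge N\lambda^{-1/2}$, after substituting $z=u_1^2$ so $du_1=\tfrac12 z^{-1/2}\,dz$ and using $z^{-1/2}\le N^{-1}\lambda^{1/2}$ on that region, gives $N^{-1}\lambda^{-1/2}$. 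The product is $\lambda^{-3/2}N^{-1}$, which is the claim. This is exactly the mechanism that ``preserves the Plancherel gain'' from your model case: the $v_1=u_1^2/2$ substitution absorbs the $|\tau-\tau'|$ factor into the kernel decay rather than leaving it as a Jacobian weight on $F$.
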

	
	It is well known that the function $\mu(x,\tau)=-d_0(x,(0,\tau)_F)$ satisfies the Carleson-Sj\"olin condition.
	So Lemma ~\ref{Lcsj} will imply \eqref{bigdiff}.
	
	\begin{proof}[Proof of Lemma ~\ref{Lcsj}]
	Let $\Upsilon(x,\tau,\tau')=\mu(x,\tau)+\mu(x,\tau')$.
	Then the determinant of the mixed Hessian of $\Upsilon$ satisfies
	\[
			\Big| \det \Big( \frac{\partial^2 \Upsilon}{\partial x \partial(\tau,\tau')} \Big) (x,\tau,\tau') \Big| = \mu_{x_1 \tau}''(x,\tau)\mu_{x_2 \tau'}''(x, \tau')-\mu_{x_1 \tau'}''(x,\tau')\mu_{x_2 \tau}''(x, \tau)		\]
	By the Carleson-Sj\"olin condition, the $\tau'$ derivative of this function is nonzero on the diagonal $\tau=\tau'$.
	This implies that
	\[
			\Big| \det \Big( \frac{\partial^2 \Upsilon}{\partial x \partial(\tau,\tau')} \Big) \Big| \ge c |\tau-\tau'|
		\]
	for some $c>0$ on the support of the amplitudes $B_\lambda$, if $\delta_0$ is small.
	We use the change of variables
	\[
			u = (\tau-\tau', \tau+\tau')
		\]
	Since $|du/d(\tau,\tau')|=2$, we obtain
	\[
			\Big| \det \Big( \frac{\partial^2 \Upsilon}{\partial x \partial u} \Big) \Big| \ge c |u_1|
		\]
	Now $\Upsilon$ is an even function in the $u_1$-variable, so it is a smooth function of $u_1^2$.
	We can make another change of variables
	\[
			v = (\frac{1}{2}u_1^2,u_2).
		\]
	Then $|dv/du|=|u_1|$, so
	\[
			\Big| \det \Big( \frac{\partial^2 \Upsilon}{\partial x \partial v} \Big) \Big| \ge c
		\]
	This implies that if $v$ and $\tilde{v}$ are close then
	\[
			\Big| \nabla_x[\Upsilon(x,v)-\Upsilon(x,\tilde{v})] \Big| \ge c' |v-\tilde{v}|
		\]
	for some $c'>0$.
	Since $\Upsilon$ is smooth as a function of $x$ and $v$,
	\[
			\Big| \partial_x^\alpha [\Upsilon(x,v)-\Upsilon(x,\tilde{v})] \Big| \le C_\alpha' |v-\tilde{v}|
		\]
		
	Now if we define
	\[
			K_\lambda(v,\tilde{v}) = \int_{S_r} B_\lambda(x,\tau,\tau') \overline{B_\lambda(x,\tilde{\tau},\tilde{\tau}')} e^{i\lambda[\Upsilon(x,v)-\Upsilon(x,\tilde{v})]} \,dx
		\]
	then for $j=1,2,3,\ldots$, integrating by parts yields
	\begin{equation}
		\label{diag0}
			| K_\lambda(v,\tilde{v}) | \le C_j (1+\lambda |v-\tilde{v}|)^{-2j}
		\end{equation}
	For $a, b \ge0$,
	\[
			(1+2a)(1+b) \le 2\Big( 1+(a^2+b^2)^{1/2} \Big)^2
		\]
	If we set $a=\lambda |v_1-\tilde{v}_1|$ and $b=\lambda |v_2-\tilde{v}_2|$, then \eqref{diag0} becomes
	\begin{equation}
		\label{diag}
			| K_\lambda(v,\tilde{v}) | \le C_j' (1+\lambda |(u_1^2-\tilde{u}_1^2|)^{-j} (1+\lambda |u_2-\tilde{u}_2|)^{-j}
		\end{equation}
	Let $E_{N,\lambda}$ be the characteristic function of the set
	\[
			\{ (u, \tilde{u}) \in \mathbb{R}^4 : |u_1|,|\tilde{u}_1| \ge N \lambda^{-1/2} \}
		\]
	Then the left side of \eqref{csjconc} equals
	\[
			\iint E_{N,\lambda}(u, \tilde{u}) K_\lambda(u,\tilde{u}) F(u) \overline{F(\tilde{u})} \,du d\tilde{u}
		\]
	By H\"older's inequality, it remains to prove that
	\[
			\Big\| \int E_{N,\lambda}(u, \tilde{u}) K_\lambda(u,\tilde{u}) F(u) \,du \Big\|_{L^2_{\tilde{u}}(\mathbb{R}^2)} \lesssim \lambda^{-3/2}N^{-1} \| F \|_{L^2(\mathbb{R}^2)}
		\]
	This will follow from Young's inequality, if we show that
	\[
			\sup_{\tilde{u}} \int_{|u_1| \ge N\lambda^{-1/2}} |K_\lambda(u,\tilde{u})| \,du \lesssim \lambda^{-3/2}N^{-1}
		\]
	and
	\[
			\sup_{u} \int_{|\tilde{u}_1| \ge N\lambda^{-1/2}} |K_\lambda(u,\tilde{u})| \,d\tilde{u} \lesssim \lambda^{-3/2}N^{-1}
		\]
	Because of \eqref{diag}, both of these inequalities will follow if we check that
	\begin{equation}
		\label{cal}
			\sup_{c_1,c_2\in\mathbb{R}} \int_{w_1\ge N\lambda^{-1/2}} (1+\lambda |w_1^2-c_1|)^{-2} (1+\lambda |w_2-c_2|)^{-2} \,dw \lesssim \lambda^{-3/2}N^{-1}
		\end{equation}
	By changing variables,
	\begin{equation}
		\label{calc}
			\sup_{c_2\in \mathbb{R}} \int (1+\lambda |w_2-c_2|)^{-2} \,dw_2 = \lambda^{-1} \int (1+ |\tilde{w}_2|)^{-2} \,d\tilde{w}_2 \lesssim \lambda^{-1}
		\end{equation}
	If we set $z=w_1^2$, then $dw_1=\frac{1}{2}z^{-1/2}dz$, so we also have
	\begin{multline}
		\label{calcu}
			\sup_{c_1\in \mathbb{R}} \int _{w_1\ge N\lambda^{-1/2}} (1+\lambda |w_1^2-c_1|)^{-2} \,dw_1
			\\
			=\frac{1}{2} \sup_{c_1\in \mathbb{R}} \int _{z\ge N^2\lambda^{-1}} (1+\lambda |z-c_1|)^{-2} z^{-1/2} \,dz
			\\
			\le \lambda^{1/2} N^{-1} \sup_{c_1\in \mathbb{R}} \int (1+\lambda |z-c_1|)^{-2} \,dz
			\\
			\le \lambda^{-1/2} N^{-1} \int (1+ |\tilde{z}|)^{-2} \,d\tilde{z} \lesssim \lambda^{-1/2}N^{-1}
		\end{multline}
	Now \eqref{calc} and \eqref{calcu} yield \eqref{cal}, completing the proof of Lemma \ref{Lcsj}.
	\end{proof}
	
	So we have proven \eqref{bigdiff}, and it remains to show \eqref{lildiff}.
	To simplify the notation, we will only prove this for $j=0$.
	The argument will also show that \eqref{lildiff} holds for all $j$ in $\mathbb{Z}$, uniformly.
	
	Let $p=(0,0)_F$.
	Let $T$ be the tangent plane at $p$.
	The exponential map is a diffeomorphism from a ball of radius $2\delta$ in $T$ to $B\big(p,2\delta\big)$ if $\delta$ is small.
	Let $\kappa$ be the inverse function.
	We will identify $T$ with $\mathbb{R}^2$ in such a way that the Riemannian metric on $T$ agrees with the Euclidean metric on $\mathbb{R}^2$.
	We can make this identification in such a way that $\exp_p(\sigma,0)=(\sigma,0)_F$ for all $\sigma$.
	Let $\kappa_1$ and $\kappa_2$ denote the component functions of $\kappa$, so that $\kappa=(\kappa_1,\kappa_2)$.
	The inequality \eqref{lildiff} will be a consequence of the following lemma.
	
	\begin{Lemma}
		\label{Lgauss}
			Let $\psi(x,\tau)=-d_0\big(x,(0,\tau)_F\big)$ and let $\rho_\lambda$ be functions in $C_0^\infty(\mathbb{R}^3)$ satisfying
			\begin{equation}
				\label{gc1}
					| \partial_\tau^m \rho_\lambda(x,\tau) | \le C_m \lambda^{m/2}
				\end{equation}
			and
			\begin{equation}
				\label{gc2}
					\text{supp }\rho_\lambda \subset \Big\{ (x,\tau) : |\tau| \le \lambda^{-1/2}, x \in \mathcal{N}_x,(0,\tau)_F \in \mathcal{N}_y \Big\}
				\end{equation}
			Assume $q_k$ are points in $\mathcal{N}_x$ satisfying
			\begin{equation}
				\label{sinecon}
					\Big| \frac{\kappa_2(q_k)}{|\kappa(q_k)|} - \frac{\kappa_2(q_\ell)}{|\kappa(q_\ell)|} \Big| \ge c \lambda^{-1/2}|k-\ell |
				\end{equation}
			with $c>0$, when $|k-\ell| \ge 2$.
			If $\mathcal{N}_x$ is sufficiently small, then
			\begin{equation}
				\label{gconc}
					\lambda^{1/2} \int \Big| \sum_k e^{i\lambda \psi(q_k,\tau)} \rho_\lambda(q_k,\tau) p_k \Big|^2 \,d\tau \lesssim \sum |p_k|^2
				\end{equation}
			This estimate is uniform over different choices of the points $q_k$.
		\end{Lemma}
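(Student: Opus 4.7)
The plan is to establish \eqref{gconc} by expanding the $L^2$ norm and exhibiting almost-orthogonality between the different $k$-terms. Squaring out gives
\[
\lambda^{1/2} \int \Big| \sum_k e^{i\lambda \psi(q_k,\tau)} \rho_\lambda(q_k,\tau) p_k \Big|^2 \,d\tau
= \lambda^{1/2} \sum_{k,\ell} p_k \overline{p_\ell} \, I_{k\ell},
\]
where
\[
I_{k\ell} = \int e^{i\lambda \Phi_{k\ell}(\tau)} \rho_\lambda(q_k,\tau) \overline{\rho_\lambda(q_\ell,\tau)} \,d\tau,
\qquad \Phi_{k\ell}(\tau) = \psi(q_k,\tau) - \psi(q_\ell,\tau).
\]
For the near-diagonal block $|k-\ell| \le 1$, the trivial bound $|I_{k\ell}| \lesssim \lambda^{-1/2}$ coming from \eqref{gc2} is enough to control these contributions by $\sum_k |p_k|^2$. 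The real work is to prove the off-diagonal estimate $|I_{k\ell}| \lesssim \lambda^{-1/2}|k-\ell|^{-N}$ for $|k-\ell| \ge 2$ and arbitrary $N$.

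The key observation is the computation of $\partial_\tau \psi(x,0)$. Using $\nabla_y d_0(x,y) = -\exp_y^{-1}(x)/|\exp_y^{-1}(x)|$ at $y=p$ together with the isometric identification of $T_pM$ with $\mathbb{R}^2$ specified in the paragraph preceding the lemma, one obtains (up to sign) $\partial_\tau \psi(x,0) = \kappa_2(x)/|\kappa(x)|$, so hypothesis \eqref{sinecon} becomes $|\partial_\tau \Phi_{k\ell}(0)| \ge c \lambda^{-1/2} |k-\ell|$ whenever $|k-\ell| \ge 2$. Since $\partial_\tau^2 \psi(x,\tau)$ is smooth in $(x,\tau)$, the difference satisfies $|\partial_\tau^2 \Phi_{k\ell}(\tau)| \le C|q_k - q_\ell| \le C \,\mathrm{diam}(\mathcal{N}_x)$, and combined with $|\tau| \le \lambda^{-1/2}$ on the support of $\rho_\lambda$ this yields the uniform lower bound
\[
|\partial_\tau \Phi_{k\ell}(\tau)| \ge c' \lambda^{-1/2} |k-\ell|, \qquad |k-\ell|\ge 2,
\]
provided $\mathcal{N}_x$ is chosen small enough that $C\,\mathrm{diam}(\mathcal{N}_x)$ is strictly less than $c$.

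To finish, rescale $\tilde\tau = \lambda^{1/2} \tau$; by \eqref{gc1} the amplitude becomes a compactly supported smooth function of $\tilde\tau$ on $[-1,1]$ with all derivatives bounded independently of $\lambda$, while the rescaled phase $\tilde\Phi_{k\ell}(\tilde\tau) = \lambda \Phi_{k\ell}(\lambda^{-1/2}\tilde\tau)$ satisfies $|\partial_{\tilde\tau} \tilde\Phi_{k\ell}| \gtrsim |k-\ell|$ with $|\partial_{\tilde\tau}^n \tilde\Phi_{k\ell}| \le C_n$ for $n \ge 2$ (the scaling kills the higher derivatives since $\lambda^{1-n/2} \to 0$). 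Repeated integration by parts in $\tilde\tau$ then yields $|I_{k\ell}| \le C_N \lambda^{-1/2}|k-\ell|^{-N}$ for any $N$. Applying this with $N=2$ and invoking the Schur test bounds the off-diagonal contribution by $\sum_{k,\ell:|k-\ell|\ge 2} |p_k||p_\ell||k-\ell|^{-2} \lesssim \sum_k |p_k|^2$, which together with the diagonal estimate gives \eqref{gconc}. The main technical obstacle is the uniformity of the phase derivative lower bound across the support of $\rho_\lambda$ (not just at $\tau=0$), which is what forces the smallness hypothesis on $\mathcal{N}_x$; once this is in place, the integration by parts after rescaling is routine.
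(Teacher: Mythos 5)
Your proof is correct and follows essentially the same route as the paper: expand the $L^2$ norm into a double sum, use Gauss's lemma at $p$ to show $\partial_\tau\psi(x,0)$ is a fixed nonzero multiple of $\kappa_2(x)/|\kappa(x)|$ (the paper writes $\nu\cdot\kappa(x)/|\kappa(x)|$ with $\nu$ the pushforward of $\partial/\partial\tau$; the absence of a $\kappa_1$-component is because $g_{\sigma\tau}\equiv 0$ in Fermi coordinates, a point worth making explicit since ``up to sign'' is not quite accurate), convert \eqref{sinecon} into a lower bound on the phase-derivative difference using the smallness of $\mathcal{N}_x$, and finish by non-stationary phase plus Schur's test. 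The only implementation difference is that the paper Taylor-expands $\psi(x,\tau)=\psi(x,0)+\tau\partial_\tau\psi(x,0)+r(x,\tau)$ and absorbs $e^{\pm i\lambda r}$ into an amplitude $P_\lambda$ satisfying $|\partial_\tau^m P_\lambda|\le C_m\lambda^{m/2}$, integrating by parts against the exactly linear phase, whereas you bound $\partial_\tau^2\Phi_{k\ell}$ by $C\,\mathrm{diam}(\mathcal{N}_x)$ and rescale $\tilde\tau=\lambda^{1/2}\tau$; these are interchangeable.
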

	
	To see that Lemma ~\ref{Lgauss} implies \eqref{lildiff}, let $\kappa_r(x)$ and $\kappa_\theta(x)$ be the polar coordinates of $\kappa(x)$ with $\kappa_\theta(x)$ in $[0,2\pi)$.
	These functions are well defined and smooth on $\mathcal{N}_x$.
	Define
	\[
			\rho_\lambda(x,\tau)=\eta_{\lambda,0}(\tau) A_\lambda\big(x,(0,\tau)_F\big)
		\]
	Then \eqref{gc1} and \eqref{gc2} hold.
	Define the sets
	\[
			V_k=\Big\{ x \in \mathcal{N}_x: \lambda^{-1/2}k \le \kappa_\theta(x) < \lambda^{-1/2}(k+1) \Big\}
		\]
	We have
	\begin{multline*}
			\int_{S_r} \lambda \, \Big| \int e^{-i\lambda d_0(x,(0,\tau)_F)} \eta_{\lambda,0}(\tau) A_\lambda\big(x,(0,\tau)_F\big) H(\tau) \,d\tau \Big|^2 |g(x)|^2 \,dx
			\\
			\le \sum_k \lambda \Big\| \int e^{i\lambda \psi(x,\tau)} \rho_\lambda(x,\tau) H(\tau) \,d\tau \Big\|_{L_x^\infty(V_k)}^2 \| g \|_{L^2(V_k)}^2
			\\
			\le \sup_\ell \| g \|_{L^2(V_\ell)}^2 \sum_k \lambda \Big\| \int e^{i\lambda \psi(x,\tau)} \rho_\lambda(x,\tau) H(\tau) \,d\tau \Big\|_{L_x^\infty(V_k)}^2
		\end{multline*}
	If $\mathcal{N}_x$ is small, then each $V_\ell$ is contained in $\mathcal{T}_\lambda(\gamma_\ell)$ for some $\gamma_\ell \in \Pi_0$.
	In fact, each $\gamma_\ell$ can be chosen to go through $p$.
	This yields
	\[
			\sup_\ell \| g \|_{L^2(V_\ell)}^2 \le \sup_{\gamma\in\Pi_0} \| g \|_{L^2(\mathcal{T}_\lambda(\gamma))}^2
		\]
	Now to prove \eqref{lildiff}, it remains to show that
	\[
			\sum_k \lambda^{1/2} \Big\| \int e^{i\lambda \psi(x,\tau)} \rho_\lambda(x,\tau) H(\tau) \,d\tau \Big\|_{L_x^\infty(V_k)}^2 \lesssim \| H \|_{L^2(\mathbb{R})}^2
		\]
	It suffices to check that for any choice of points $q_k$ in $V_k$,
	\begin{equation*}
		\label{predual}
			\sum_k \lambda^{1/2} \Big| \int e^{i\lambda \psi(q_k,\tau)} \rho_\lambda(q_k,\tau) H(\tau) \,d\tau \Big|^2 \lesssim \| H \|_{L^2(\mathbb{R})}^2
		\end{equation*}
	and that this holds uniformly over different choices of $q_k$.
	By duality, this inequality is equivalent to \eqref{gconc}.
	To apply Lemma ~\ref{Lgauss}, we still need to check that any choice of points $q_k$ in $S_k$ satisfies \eqref{sinecon}.
	If $\mathcal{N}_x$ and $\mathcal{N}_y$ are sufficiently small, then $\kappa_\theta(\mathcal{N}_x)$ is contained in $[2\pi/3, 4\pi/3]$.
	When $|j-k| \ge 2$, we then have
	\begin{multline*}
			\Big| \frac{\kappa_2(q_j)}{|\kappa(q_j)|} - \frac{\kappa_2(q_k)}{|\kappa(q_k)|} \Big| = \Big| \sin\big(\kappa_\theta(q_j)\big)- \sin\big(\kappa_\theta(q_k)\big) \Big|
			\\
			\ge \frac{1}{2} \Big| \kappa_\theta(q_j)-\kappa_\theta(q_k) \Big| \ge \frac{1}{4} \lambda^{-1/2}|j-k|
		\end{multline*}
	This is \eqref{sinecon}, so Lemma ~\ref{Lgauss} will imply \eqref{lildiff}.
	
	\begin{proof}[Proof of Lemma \ref{Lgauss}]
	We can write
	\[
			\psi(x,\tau) = \psi(x,0) + \tau \partial_\tau\psi(x,0)+r(x,\tau)
		\]
	where
	\[
			| r(\tau,x) | \le C_0 |\tau|^2 \quad | \partial_\tau r(\tau,x) | \le C_1 |\tau|
		\]
	and for $m=2,3,\ldots$
	\[
			| \partial_\tau^m r(\tau,x) | \le C_m
		\]
	
	Fix $x$ in $\mathcal{N}_x$ and let $\Theta$ be the geodesic sphere of radius $|\kappa(x)|$ around $x$.
	By Gauss' lemma, $\kappa(x)$ is normal to $\kappa(\Theta)$.
	Define a function $G$ from $\mathbb{R}^2$ to $\mathbb{R}$ by
	\[
		G(u)=-d_0(x,\exp_p(u))
	\]
	Then $\kappa(\Theta)$ is a level set of $G$, so $\nabla G(0)$ is normal to $\kappa(\Theta)$.
	That is, $\nabla G(0)$ is a multiple of $\kappa(x)$.
	Define a curve $c$ in $T$ by $c(t)=t\kappa(x)$.
	Then $G(c(t))=(t-1)|\kappa(x)|$ for $t$ near $0$, so $\nabla G(0) \cdot \kappa(x)=|\kappa(x)|$.
	Since $\nabla G(0)$ is a multiple of $\kappa(x)$, this implies that
	\[
		\nabla G(0) = \frac{\kappa(x)}{|\kappa(x)|}
	\]
	This yields
	\[
		\partial_\tau \psi(x,0) = \nu \cdot \frac{\kappa(x)}{|\kappa(x)|}
	\]
	where
	\[
		\nu = \partial_\tau \kappa \big((0,\tau)_F \big) \Big|_{\tau=0}
	\]
	That is, $\nu$ is the pushforward under $\kappa$ of $\partial/\partial\tau$ at $p$.
	It must be transverse to the pushforward under $\kappa$ of $\partial/\partial\sigma$ at $p$, whose second component is zero.
	So the second component of $\nu$ is nonzero.
	By \eqref{sinecon},
	\[
			\Big| \partial_\tau \psi(q_k,0) - \partial_\tau \psi(q_\ell,0) \Big| \ge c' \lambda^{-1/2} |j-k|
		\]
	for some $c' > 0$ when $|k-\ell | \ge 2$.
	
	Now define
	\[
			P_\lambda(q_k,q_\ell,\tau)=\rho_\lambda(q_k,\tau) \overline{\rho_\lambda(q_\ell,\tau)} e^{i\lambda[\psi(q_k,0)+r(q_k,\tau)]} e^{-i\lambda[\psi(q_\ell,0)+r(\ell,\tau)]}
		\]
	Then $P_\lambda(q_k,q_\ell,\tau)$ vanishes when $|\tau| \ge \lambda^{-1/2}$ and satisfies
	\[
			\Big| \partial_\tau^m P_\lambda(q_k,q_\ell,\tau) \Big| \le C_m \lambda^{m/2}
		\]
	The left side of \eqref{gconc} is equal to
	\[
			\lambda^{1/2} \sum_{k,\ell} p_k \overline{p_\ell} \Big( \int e^{i\tau\lambda[ \partial_\tau \psi(q_k,0)-\partial_\tau \psi(q_\ell,0)]} P_\lambda(q_k,q_\ell,\tau) \, d\tau \Big)
		\]
	We integrate by parts twice to control this by
	\[
			\sum_{k,\ell} |p_k p_\ell | (1+|k-\ell |)^{-2} \lesssim \sum_{k,\ell} (|p_k|^2+|p_\ell|^2)(1+|k-\ell|)^{-2} \lesssim \sum_k |p_k|^2
		\]
	This completes the proof of Lemma ~\ref{Lgauss}, and now Theorem ~\ref{thlink} follows.
	\end{proof}
	
\section{End of Proof of Theorem 1.1}

	To complete the proof of Theorem ~\ref{thbgt}, it remains to prove Lemma ~\ref{Lredbgt}.
	This will be a consequence of the following variant.
	To state it, recall that $\eta(x,y)$ is in $C_0^\infty(\R^2 \times \R^2)$ and is supported by $x$ and $y$ in a small neighborhood of $S_r$ satisfying $\frac{1}{2}\delta \le d_0(x,y) \le \delta$.
	Also $\eta(x,y)=1$ when $x$ is in a small neighborhood of $S_r$ and $d_0(x,y)$ is in an open neighborhood of the support of $\hat{\chi}$.

	\begin{Lemma}
	\label{Lunswitched}
		Fix $a \in S_{2/3,1/3}^0(\mathbb{R}_{t,x}^3 \times \mathbb{R}_\xi^2)$.
		Define an operator $D_a$ by
		\begin{equation*}
			D_a f = \iint e^{i\varphi(t,x,\xi)-iy\cdot\xi} \eta(x,y) a(t,x,\xi) f(y)\, d\xi dy
		\end{equation*}
		For any smooth curve $\Gamma$ in $S_r$ of unit length, and for $f$ with fixed compact support,
		\[
				\| I_\lambda(D_a)f \|_{L^4(\Gamma)} \lesssim \lambda^{1/4} \| f \|_{L^2(\mathbb{R}^2)}
			\]
	\end{Lemma}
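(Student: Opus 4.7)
The plan is to mirror the structure of the proof of Lemma~\ref{Lredsog}: first reduce $D_a$ to an operator whose kernel is described by Lemma~\ref{kernel}, then run the $L^4$-restriction version of the Carleson-Sj\"olin analysis.

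First I would apply the iterated H\"ormander reduction exactly as in the proof of Lemma~\ref{Lredsog}. The implicit function theorem applied to $\varphi_\xi'(t,x,\xi)=y$ on the support of $a$ produces a symbol $b\in S^0_{2/3,1/3}(\mathbb{R}^3_{t,y}\times\mathbb{R}^2_\xi)$ such that $D_a$ differs by a smoothing operator from the operator $T_b$ with kernel $\eta(x,y)\int e^{i\varphi(t,x,\xi)-iy\cdot\xi}b(t,y,\xi)\,d\xi$. This step is independent of the target norm and goes through verbatim. Then Lemma~\ref{kernel} writes the kernel of $I_\lambda(T_b)$ as
\[
\lambda^{1/2}e^{-i\lambda d_0(x,y)}A_\lambda(x,y)+R_\lambda(x,y),
\]
where $R_\lambda$ is uniformly bounded (and hence contributes $O(\|f\|_{L^2})$ to $\|I_\lambda(D_a)f\|_{L^4(\Gamma)}$, which is negligible compared with $\lambda^{1/4}\|f\|_{L^2}$), and the $A_\lambda$ are smooth, supported where $\tfrac12\delta\le d_0(x,y)\le\delta$, with the derivative bounds of Lemma~\ref{kernel}.

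Next, I would parametrize $\Gamma$ by arclength as $s\mapsto x(s)$ and consider the operator $Tf(s)=\int e^{-i\lambda d_0(x(s),y)}A_\lambda(x(s),y)f(y)\,dy$. It suffices to show $\|T\|_{L^2(\mathbb{R}^2)\to L^4([0,1])}\lesssim\lambda^{-1/4}$, since the $\lambda^{1/2}$ prefactor then delivers the claimed bound. By the $TT^*$ method this is equivalent to $\|TT^*\|_{L^{4/3}\to L^4}\lesssim\lambda^{-1/2}$, where the kernel of $TT^*$ is the oscillatory integral
\[
\widetilde K(s,s')=\int e^{-i\lambda[d_0(x(s),y)-d_0(x(s'),y)]}A_\lambda(x(s),y)\overline{A_\lambda(x(s'),y)}\,dy.
\]
I would analyze $\widetilde K$ by stationary phase in $y$, using the Carleson-Sj\"olin rank-$2$ condition on the phase $\psi(y,s)=d_0(x(s),y)$, which is standard on the support of $A_\lambda$ because $d_0$ is bounded away from $0$ there; this is the same condition already invoked in Section~3 for $\mu(x,\tau)=-d_0(x,(0,\tau)_F)$ and underlies Lemma~\ref{Lcsj}. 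The resulting oscillatory-integral structure for $\widetilde K$, after tracking the $\lambda^{|\beta|/3}$ losses of the amplitude, is exactly the form to which the sharp Burq-G\'erard-Tzvetkov $L^{4/3}\to L^4$ estimate applies, yielding the desired $\lambda^{-1/2}$ norm bound.

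The hardest part is the oscillatory-integral analysis of $\widetilde K$. Applying Stein's $L^4$ oscillatory-integral theorem (rather than a crude pointwise kernel estimate followed by Young's inequality, which would introduce an unwanted $\log\lambda$) is what is needed, and making it compatible with the type-$(2/3,1/3)$ amplitude requires redoing the integration-by-parts bookkeeping so that the $\lambda^{|\beta|/3}$ losses are absorbed by the $\lambda^{-1}$ gain per non-stationary derivative of the phase. A further subtlety is that the Carleson-Sj\"olin rank condition on $d_0(x(s),y)$ can degenerate where $\Gamma$ is tangent to a geodesic sphere about some $y$ in the support of $A_\lambda$; this is handled by a finite partition of unity in $(s,y)$ on which the condition is uniform, and on each piece the Burq-G\'erard-Tzvetkov argument applies.
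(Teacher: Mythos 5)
You propose to first perform the symbol-switching reduction from Lemma~\ref{Lredsog}, replacing $a(t,x,\xi)$ with a symbol $b(t,y,\xi)$ so that Lemma~\ref{kernel} applies. This is the wrong move here, and in fact the paper deliberately avoids it. Lemma~\ref{kernel} produces a factor $A_\lambda$ whose $y$-derivatives lose $\lambda^{1/3}$ each ($|\partial_x^\alpha\partial_y^\beta A_\lambda|\le C_{\alpha,\beta}\lambda^{|\beta|/3}$). That is the right normalization for Sogge's argument in Section~3, where the key oscillatory-integral gains come from differentiating in $x$ (see Lemma~\ref{Lcsj}, which requires the $x$-derivatives of the amplitude to be $\lambda$-free). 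In the present lemma the situation is reversed: the $TT^*$ kernel is an oscillatory integral in the $y$-variable, so one needs the $y$-derivatives of $A_\lambda$ to be $\lambda$-free. This is exactly what Lemma~\ref{unkernel} provides ($|\partial_x^\alpha\partial_y^\beta A_\lambda|\le C_{\alpha,\beta}\lambda^{|\alpha|/3}$), and it is obtained from $D_a$ directly, without any switching. By switching you trade a clean amplitude for one that actively fights the subsequent integration by parts and stationary phase in $y$.

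The second and more serious gap is in your analysis of $\widetilde K(s,s')$. For fixed $s\ne s'$, the stationary set of $y\mapsto d_0(x(s),y)-d_0(x(s'),y)$ is the geodesic through $x(s)$ and $x(s')$, i.e.\ a one-dimensional curve, not a nondegenerate critical point. Two-dimensional stationary phase in $y$ therefore does not directly apply, and neither does a blanket invocation of a Carleson-Sj\"olin-type $L^4$ theorem for the phase $d_0(x(s),y)$ with $s\in[0,1]$ and $y\in\mathbb{R}^2$; the local partition of unity you mention addresses tangencies of $\Gamma$ with geodesic spheres but not this structural degeneracy of the $TT^*$ phase. The paper's actual argument is precisely designed to sidestep this: it slices the $y$-integral by geodesic polar coordinates $y=\exp_{x_0}(\rho\omega)$, reducing to one-dimensional oscillatory integrals over $\omega\in S^1$ for each fixed $\rho$ (see the decomposition $T_\lambda=\int T_\lambda^\rho\,d\rho$). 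For $T_\lambda^\rho(T_\lambda^\rho)^*$ the stationary points in $\omega$ are isolated, and 1D stationary phase (together with Gauss's lemma, Lemma~\ref{bgtgauss}) gives $|K_\lambda^\rho(t,\tau)|\lesssim\lambda(1+\lambda|t-\tau|)^{-1/2}$. Finally, your remark that a kernel bound followed by Young's inequality would force a $\log\lambda$ loss misdescribes the paper's route: the kernel bound is plugged into the Hardy-Littlewood fractional integration inequality on the $(L^{4/3},L^4)$ line, which gives the sharp $\lambda^{1/2}$ bound with no logarithmic loss. In short, the opening H\"ormander reduction and $TT^*$ framing are on target, but the switched kernel lemma and the two-dimensional oscillatory-integral step both need to be replaced by the unswitched Lemma~\ref{unkernel} and the polar-slicing argument.
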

	
	Using Lemma \ref{Lunswitched}, we can now prove Lemma \ref{Lredbgt}.
	\begin{proof}[Proof of Lemma \ref{Lredbgt}]
	Fix a symbol $a \in S_{2/3, 1/3}^0(\mathbb{R}_{t,x}^3 \times \mathbb{R}_\xi^2)$.
	We may assume that $\big(1-\eta(x,y)\big)a(t,x,\xi)$ vanishes on a neighborhood of the set 
	\[
			\Sigma_0=\{ (t,x,y,\xi): t=d_0(x,y) \}
		\]
	We can make this assumption because $I_\lambda(U_a)$ only depends on $t$ in the support of $\hat{\chi}$.
	The kernel of $U_a$ is
	\begin{equation*}
		 \int e^{i\varphi(t,x,\xi)-iy \cdot \xi} a(t,x,\xi) \,d\xi
	\end{equation*}
	Define a set
	\[
		\Sigma=\Big\{ (t,x,y,\xi) : \phi_\xi'(t,x,\xi)-y=0 \Big\}
	\]
	Define an operator $D_a$ with kernel
	\[
			\int e^{i\varphi(t,x,\xi)-iy \cdot \xi} \eta(x,y) a(t,x,\xi) \,d\xi
		\]
	By \eqref{flow}, the set $\Sigma$ is contained in $\Sigma_0$.
	So the symbol $\big{(} 1-\eta(x,y) \big{)}a(t,x,\xi)$ vanishes on a neighborhood of $\Sigma$.
	By Proposition ~1.2.4 of H\"ormander ~\cite{H}, the difference between $U_a$ and $D_a$ is smoothing, so it suffices to show that $I_\lambda(D_a)$ satisfies \eqref{coord1}.
	Any broken geodesic $\gamma$ in $S_r$ can be broken up into a fixed finite number of segments which are smooth curves, so this will follow from Lemma ~\ref{Lunswitched}.
	\end{proof}
	
	The next lemma will give a suitable description of the kernel of $I_\lambda(D_a)$.
	This description is sufficiently similar to the one used in Burq-G\'erard-Tzvetkov ~\cite{BGT}, so that the same argument will yield Lemma ~\ref{Lunswitched}.
	
	\begin{Lemma}
	\label{unkernel}
		Fix $a \in S_{2/3,1/3}^0(\mathbb{R}_{t,x}^3 \times \mathbb{R}_\xi^2)$.
		The kernel of $I_\lambda(D_a)$ is of the form
		\begin{equation}
		\label{unkerneleq}
			\lambda^{1/2} e^{-i\lambda d_0(x,y)}A_\lambda(x,y) + R_\lambda(x,y)
		\end{equation}
		where $R_\lambda$ is uniformly bounded in $\lambda$ and $A_\lambda$ is in $C^\infty(\mathbb{R}^2 \times \mathbb{R}^2)$ and satisfies
		\begin{equation*}
			|\partial_x^\alpha \partial_y^\beta A_\lambda| \le C_{\alpha, \beta} \lambda^{|\alpha|/3}
		\end{equation*}
		Also $A_\lambda$ is supported by $x$ and $y$ in a small neighborhood of $S_r$ satisfying $\delta/2 \le d_0(x,y) \le \delta$.
	\end{Lemma}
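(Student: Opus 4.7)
The plan is to mirror the proof of Lemma~\ref{kernel} almost verbatim, with the symbol $a(t,x,\xi)$ depending on $x$ rather than on $y$. Starting from
\[
\iint e^{i\varphi(t,x,\xi) - iy\cdot\xi - it\lambda} \hat{\chi}(t)\,\eta(x,y)\, a(t,x,\xi)\,d\xi\,dt,
\]
I would first insert a dyadic cutoff $\beta(\xi/\lambda)$ localizing $|\xi|\sim \lambda$. On the complement, the eikonal equation \eqref{eikonal} gives $\partial_t\varphi = p_0(x,\partial_x\varphi)$, so
\[
\bigl|\partial_t\bigl(\varphi(t,x,\xi) - y\cdot\xi - t\lambda\bigr)\bigr|\gtrsim p_0(x,\xi) + \lambda \gtrsim 1+|\xi|,
\]
and repeated integration by parts in $t$ shows that the discarded piece contributes a kernel uniformly bounded in $\lambda$, which can be absorbed into $R_\lambda$.

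Next I would rescale $\xi\to\lambda\xi$ to rewrite the localized kernel as
\[
\lambda^2 \iint e^{i\lambda\Phi(t,x,y,\xi)}\, p_\lambda(t,x,y,\xi)\,d\xi\,dt,
\]
with $\Phi(t,x,y,\xi)=\varphi(t,x,\xi)-y\cdot\xi-t$ and $p_\lambda(t,x,y,\xi)=\beta(\xi)\hat{\chi}(t)\eta(x,y)\,a(t,x,\lambda\xi)$. Because $a\in S^0_{2/3,1/3}(\mathbb{R}^3_{t,x}\times\mathbb{R}^2_\xi)$, the defining estimates $|\partial_t^k\partial_x^\alpha\partial_\xi^\gamma a|\lesssim(1+|\xi|)^{-2|\gamma|/3+(k+|\alpha|)/3}$ combined with the rescaling $\xi\mapsto\lambda\xi$ yield the symbol bounds
\[
\bigl|\partial_t^k\partial_x^\alpha\partial_y^\beta\partial_\xi^\gamma p_\lambda\bigr|\le C_{k,\alpha,\beta,\gamma}\,\lambda^{(k+|\alpha|+|\gamma|)/3}.
\]
Note that, in contrast to Lemma~\ref{kernel}, the $\lambda$-growth is driven by $x$-derivatives rather than $y$-derivatives, which is precisely what produces the $\lambda^{|\alpha|/3}$ (rather than $\lambda^{|\beta|/3}$) in the conclusion.

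I would then apply stationary phase in $(t,\xi)$. Using the linearization $\varphi(t,x,\xi)=x\cdot\xi+tp_0(x,\xi)+Q(t,x,\xi)$ from \eqref{linapp}, the determinant of the $(t,\xi)$-Hessian of $\Phi$ computes to
\[
-\tfrac{t}{p_0(x,\xi)}\det(g^{jk}) + t^2 q(t,x,y,\xi),
\]
so for $\delta$ small (hence $t\in\mathrm{supp}\,\hat{\chi}$ bounded away from $0$) the Hessian is non-degenerate on the support of $p_\lambda$. By the implicit function theorem the critical equations $\varphi_\xi'=y$, $\varphi_t'=1$ have a unique smooth solution $(t(x,y),\xi(x,y))$; by \eqref{flow} we have $t(x,y)=d_0(x,y)$, and Euler's relation $\varphi = \varphi_\xi'\cdot\xi$ gives
\[
\Phi(t(x,y),x,y,\xi(x,y)) = -d_0(x,y).
\]
Applying the stationary phase lemma (the analog of Lemma~\ref{staphase}, with $(x,y)$ interchanged so that the $\lambda^{|\beta|/3}$ growth there becomes $\lambda^{|\alpha|/3}$ growth here) produces
\[
\lambda^2\cdot\lambda^{-3/2}\,e^{-i\lambda d_0(x,y)} A_\lambda(x,y) = \lambda^{1/2} e^{-i\lambda d_0(x,y)} A_\lambda(x,y),
\]
with $A_\lambda$ smooth, supported where $\eta$ is, and satisfying $|\partial_x^\alpha\partial_y^\beta A_\lambda|\le C_{\alpha,\beta}\lambda^{|\alpha|/3}$.

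The only delicate point is bookkeeping for the stationary-phase expansion: the $y$-variable only enters $\Phi$ through the linear term $-y\cdot\xi$ and the critical value through $-d_0(x,y)$, so $y$-derivatives never hit the amplitude $p_\lambda$ and therefore yield no $\lambda$-factor after stripping off $e^{-i\lambda d_0}$. All $\lambda$-growth in $A_\lambda$ comes from $x$-derivatives, which do act on $p_\lambda$ and produce factors $\lambda^{1/3}$ per derivative. Once this is checked, Lemma~\ref{unkernel} follows exactly as Lemma~\ref{kernel} did.
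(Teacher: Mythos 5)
Your proposal is correct and takes the same approach the paper intends: the paper simply remarks that Lemma~\ref{unkernel} ``follows from essentially the same proof as Lemma~\ref{kernel},'' and you correctly spell out that the only change is that the symbol $a(t,x,\xi)$ depends on $x$ rather than $y$, so the rescaled amplitude satisfies $|\partial_t^k\partial_x^\alpha\partial_y^\beta\partial_\xi^\gamma p_\lambda|\lesssim\lambda^{(k+|\alpha|+|\gamma|)/3}$ and the $x$- and $y$-roles in Lemma~\ref{staphase} swap, producing $\lambda^{|\alpha|/3}$ growth instead of $\lambda^{|\beta|/3}$. One small imprecision: $y$-derivatives \emph{do} hit the amplitude through $\eta(x,y)$; the point is rather that $\eta$ is $\lambda$-independent so those derivatives contribute no $\lambda$-growth, which is what your conclusion uses.
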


	Lemma ~\ref{unkernel} follows from essentially the same proof as Lemma ~\ref{kernel}.
	Now we can follow the argument in Burq-G\'erard-Tzvetkov ~\cite{BGT} to finish the proof of Lemma ~\ref{Lunswitched}.
	
\subsection*{Argument from Burq-G\'erard-Tzvetkov \cite{BGT}}

	Let $T_\lambda$ be the operator with kernel
	\[
			\lambda^{1/2} e^{-i\lambda d_0(x,y)} A_\lambda(x,y)
		\]
	We will complete the proof of Lemma \ref{Lunswitched} by showing that for any smooth curve $\Gamma$ in $S_r$ of unit length,
	\begin{equation}
		\label{bgtgoal}
			\| T_\lambda f \|_{L^4(\Gamma)} \lesssim \lambda^{1/4} \| f \|_{L^2(\mathbb{R}^2)}
		\end{equation}
	By using a partition of unity and abusing notation, we can assume there is a point $x_0$ in $S_r$ such that
		$A_\lambda$ is supported by $x$ in the geodesic ball $B(x_0,c_0\delta)$ of radius $c_0\delta$ around $x_0$, where $c_0>0$ is small.
	Then there are small constants $c_2 > c_1 > 0$ such that $A_\lambda$ is supported by $y$ in the geodesic annulus $B(x_0, c_2 \delta) \smallsetminus B(x_0, c_1 \delta)$.
	
	Let $T$ be the tangent plane at $x_0$.
	We will use geodesic polar coordinates $(\rho, \omega)$ for the $y$-variable, with $\omega$ a unit vector in $T$ and $\rho >0$, so that $y=\exp_{x_0}(\rho\omega)$.
	Then we can write
	\[
			(T_\lambda f)(x) = \int_{c_1\delta}^{c_2\delta} (T_\lambda^\rho f_\rho)(x) \,d\rho
		\]
	with
	\[
			(T_\lambda^\rho f)(x) = \lambda^{1/2} \int_{S^1} e^{-i\lambda d_{0,\rho}(x,\omega)} A_{\lambda,\rho}(x,\omega) f(\omega) \,d\omega
		\]
	Here
	\[
			d_{0,\rho}(x,\omega)=d_0(x,y), \quad  f_\rho(\omega)=f(y), \quad \text{and} \quad A_{\lambda,\rho}(x,\omega)=J(\rho,\omega)A_\lambda(x,y)
		\]
	where $J$ is a smooth function satisfying $J(\rho,\omega)=\rho$ when $c_1\delta \le \rho \le c_2\delta$.
	
	If we can prove the uniform estimates
	\begin{equation}
		\label{Tr}
			\| T_\lambda^\rho f \|_{L^4(\Gamma)} \lesssim \lambda^{1/4} \| f \|_{L^2(S^1)}
		\end{equation}
	then \eqref{bgtgoal} will follow, because we will have
	\[
			\| T_\lambda f \|_{L^4(\Gamma)} \le \int_{c_1\delta}^{c_2\delta} \| T_\lambda^\rho f_\rho \|_{L^4(\Gamma)} \,d\rho
			\lesssim \lambda^{1/4} \int_{c_1\delta}^{c_2\delta} \| f_\rho \|_{L^2(S^1)} \,d\rho
			\lesssim \lambda^{1/4} \| f \|_{L^2(\mathbb{R})}
		\]
	So it suffices to prove \eqref{Tr}.
	By duality, \eqref{Tr} is equivalent to
	\begin{equation}
		\label{Trstar}
			\| (T_\lambda^\rho)^* f \|_{L^2(S^1)} \lesssim \lambda^{1/4} \| f \|_{L^{4/3}(\Gamma)}
		\end{equation}
	We will prove
	\begin{equation}
		\label{TrTrstar}
			\| T_\lambda^\rho(T_\lambda^\rho)^* f \|_{L^4(\Gamma)} \lesssim \lambda^{1/2} \| f \|_{L^{4/3}(\Gamma)}
		\end{equation}
	This will imply \eqref{Trstar}, because
	\[
			\| (T_\lambda^\rho)^* f \|_{L^2(S^1)}^2 = \int_\Gamma T_\lambda^\rho(T_\lambda^\rho)^* f(s) \overline{f(s)} \,ds
			\le \| T_\lambda^\rho(T_\lambda^\rho)^* f \|_{L^4(\Gamma)} \| f \|_{L^{4/3}(\Gamma)}
			\lesssim \lambda^{1/2} \| f \|_{L^{4/3}(\Gamma)}^2
		\]
	
	So it suffices to prove \eqref{TrTrstar}.
	Assume $x(t)$ parametrizes $\Gamma$ by arc length with domain $0 \le t \le 1$.
	The kernel of $T_\lambda^\rho (T_\lambda^\rho)^*$ is
	\[
			K_\lambda^\rho(t,\tau) =
				\lambda \int_{S^1} e^{-i\lambda [d_{0,\rho}(x(t), \omega) - d_{0,\rho}(x(\tau), \omega)]} A_{\lambda,\rho}(x(t),\omega) \overline{A_{\lambda,\rho}(x(\tau),\omega)} \,d\omega
		\]
	By making a linear change of variables, we may assume that $g_{ij}(x_0)=\delta^{ij}$.
	Then we have the following lemma, which we will use to control $K_\lambda^\rho$.
	
	\begin{Lemma}
		\label{bgtgauss}
			If $\rho>0$ is small and $\omega$ is in $S^1$, then
			\begin{equation}
				\label{bgtgaussconc}
					-\nabla_x d_{0,\rho}(x_0,\omega)=\omega
				\end{equation}
		\end{Lemma}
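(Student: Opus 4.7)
The lemma is essentially a restatement of Gauss's lemma. The plan is to verify the identity in two steps: first establish that $|\nabla_x d_{0,\rho}(x_0,\omega)|_g = 1$, then compute the directional derivative of $d_{0,\rho}(\cdot,\omega)$ at $x_0$ in the direction $\omega$ and show it equals $-1$. These two facts, together with the normalization $g_{ij}(x_0) = \delta_{ij}$, force $-\nabla_x d_{0,\rho}(x_0,\omega) = \omega$.

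For the first step, I would appeal to the eikonal equation: writing $y = \exp_{x_0}(\rho\omega)$, the function $x \mapsto d_0(x,y)$ is smooth on a punctured neighborhood of $y$ (small enough to lie inside the injectivity radius), and satisfies $|\nabla_x d_0(x,y)|_g^2 = 1$ throughout this region. Evaluating at $x_0$ gives $|\nabla_x d_{0,\rho}(x_0,\omega)|_g = 1$, which under the normalization $g_{ij}(x_0) = \delta_{ij}$ coincides with the Euclidean length.

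For the second step, let $c(s) = \exp_{x_0}(s\omega)$ be the unit-speed geodesic with $c(0) = x_0$ and $c(\rho) = y$. For $s$ in a small neighborhood of $0$ and $\rho$ sufficiently small, this geodesic realizes the distance, so
\[
d_0(c(s), y) = \rho - s.
\]
Differentiating at $s=0$ yields $\langle \nabla_x d_{0,\rho}(x_0,\omega), \omega\rangle_g = -1$. Combined with $|\nabla_x d_{0,\rho}(x_0,\omega)|_g = 1$ and $|\omega|_g = 1$, the equality case of the Cauchy--Schwarz inequality gives $\nabla_x d_{0,\rho}(x_0,\omega) = -\omega$.

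The only subtlety is ensuring smoothness of $d_0(\cdot, y)$ at $x_0$, which requires $y$ to lie inside the injectivity radius of $x_0$; this is automatic when $\rho$ is small, as hypothesized. All other ingredients are immediate from the definition of the exponential map and Gauss's lemma, so I do not anticipate any serious obstacle.
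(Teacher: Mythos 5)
Your proof is correct, and it reaches the same conclusion through a slightly different route than the paper. Both arguments share the key computation: differentiating $d_0(\cdot,y)$ along the radial geodesic $c(s)=\exp_{x_0}(s\omega)$, using $d_0(c(s),y)=\rho-s$ to get $\langle \nabla_x d_{0,\rho}(x_0,\omega),\omega\rangle=-1$. The difference is in how the \emph{direction} of the gradient is pinned down. The paper invokes Gauss's lemma directly: $\nabla_x d_{0,\rho}(x_0,\omega)$ is normal to the geodesic sphere through $x_0$ centered at $y$, hence parallel to $\omega$, and the directional derivative then fixes the sign and magnitude. You instead invoke the eikonal equation $|\nabla_x d_0(\cdot,y)|_g=1$ to fix the magnitude first, then use the equality case of Cauchy--Schwarz to force alignment with $-\omega$. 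The two are very close in substance (the eikonal equation is itself a standard corollary of Gauss's lemma), but the paper's argument is a touch more elementary since it never needs the full $|\nabla d|=1$ identity — only orthogonality to the geodesic sphere. Your version is equally rigorous and arguably cleaner to state, at the cost of invoking a slightly larger piece of machinery.
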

	
	\begin{proof}
	Let $\Theta$ be the geodesic sphere of radius $\rho$ around $y=\exp_{x_0}(\rho \omega)$.
	By Gauss' lemma, the vector $\omega$ is normal to $\Theta$ at $x_0$.
	Define a function $G$ by
	\[
		G(x)=d_{0,\rho}(x,\omega)
	\]
	Then $\Theta$ is a level set of $G$, so $\nabla G(x_0)$ is normal to $\Theta$ at $x_0$.
	That is, $\nabla G(x_0)$ is a multiple of $\omega$.
	Let $c$ be the geodesic satisfying $c(0)=x_0$ and $c'(0)=\omega$.
	Then for small $s$,
	\[
		G \big(c(s) \big)=\rho -s
	\]
	So $\nabla G(x_0) \cdot \omega =-1$.
	Since $\nabla G(x_0)$ is a multiple of $\omega$, this implies that $\nabla G(x_0)=-\omega$, which is \eqref{bgtgaussconc}.
	\end{proof}
	
	Using Lemma \ref{bgtgauss}, we can prove the following lemma.
	
	\begin{Lemma}
		\label{kerest}
			There is a $\delta_0>0$ such that if $|t-\tau| < \delta_0$, then
			\[
					|K_\lambda^\rho(t,\tau)| \lesssim \lambda (1+\lambda |t-\tau|)^{-1/2}
				\]
		\end{Lemma}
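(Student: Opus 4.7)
The plan is to apply stationary phase to the $\omega$-integral defining $K_\lambda^\rho(t,\tau)$. First, parameterize $\omega=(\cos\theta,\sin\theta)\in S^1$, and use Taylor's theorem with integral remainder to extract a factor of $t-\tau$ from the phase:
\[
d_{0,\rho}(x(t),\omega)-d_{0,\rho}(x(\tau),\omega)=(t-\tau)\,\Phi_0(t,\tau,\omega),
\]
where
\[
\Phi_0(t,\tau,\omega)=\int_0^1 \nabla_x d_{0,\rho}\bigl(x(\tau+s(t-\tau)),\omega\bigr)\cdot x'\bigl(\tau+s(t-\tau)\bigr)\,ds.
\]
Setting $\mu=\lambda(t-\tau)$, the kernel $K_\lambda^\rho(t,\tau)$ becomes $\lambda$ times an oscillatory integral in $\theta$ with large parameter $\mu$ and phase $-\Phi_0$.

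Next, I would use Lemma~\ref{bgtgauss} to control the $\theta$-derivatives of $\Phi_0$. At the base point $x_0$ it gives $\nabla_x d_{0,\rho}(x_0,\omega)=-\omega$. Since $A_{\lambda,\rho}$ is supported in $B(x_0,c_0\delta)\times S^1$, both $x(t)$ and $x(\tau)$ must lie in that small ball when the integrand is nonzero, so smoothness and Taylor expansion in $x$ yield
\[
\Phi_0(t,\tau,\omega)=-\omega\cdot x'(\tau)+O(c_0\delta+|t-\tau|).
\]
Since $\partial_\theta\omega=\omega^\perp$ and $\partial_\theta^2\omega=-\omega$, differentiating in $\theta$ gives
\[
\partial_\theta\Phi_0=-\omega^\perp\cdot x'(\tau)+O(c_0\delta+|t-\tau|),\qquad \partial_\theta^2\Phi_0=\omega\cdot x'(\tau)+O(c_0\delta+|t-\tau|).
\]
At the (at most two) critical points in $\theta$ one has $\omega=\pm x'(\tau)+O(c_0\delta+|t-\tau|)$, so $|\omega\cdot x'(\tau)|=1+O(c_0\delta+|t-\tau|)$, and hence $|\partial_\theta^2\Phi_0|\ge 1/2$ provided $c_0\delta$ and $\delta_0$ are small enough.

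With this nondegeneracy, a standard one-dimensional stationary phase argument applies: partition $S^1$ into small neighborhoods of the critical points (where the $|\mu|^{-1/2}$ bound comes from the quadratic lower bound on $\partial_\theta^2\Phi_0$) and their complements (where one integrates by parts in $\theta$ using the lower bound on $|\partial_\theta\Phi_0|$). The $\omega$-derivatives of $A_{\lambda,\rho}(x(t),\omega)\overline{A_{\lambda,\rho}(x(\tau),\omega)}$ are uniformly bounded by Lemma~\ref{unkernel}, so this yields
\[
\left|\int_{S^1} e^{-i\mu\Phi_0(t,\tau,\omega)}A_{\lambda,\rho}(x(t),\omega)\overline{A_{\lambda,\rho}(x(\tau),\omega)}\,d\omega\right|\lesssim (1+|\mu|)^{-1/2}.
\]
Multiplying by the $\lambda$ prefactor and combining with the trivial bound $|K_\lambda^\rho(t,\tau)|\leq C\lambda$ gives $|K_\lambda^\rho(t,\tau)|\lesssim\lambda(1+\lambda|t-\tau|)^{-1/2}$.

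The main obstacle is the uniform lower bound $|\partial_\theta^2\Phi_0|\gtrsim 1$ at critical points. The key input is Lemma~\ref{bgtgauss}, which gives the exact identity $-\nabla_x d_{0,\rho}(x_0,\omega)=\omega$ at the base point; a perturbation argument, valid because the support of $A_{\lambda,\rho}$ is in a small ball around $x_0$ and $|t-\tau|<\delta_0$ is small, extends this to $x(t)$ and $x(\tau)$. Once nondegeneracy is secured, the stationary phase bound is a routine computation.
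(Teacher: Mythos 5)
Your proposal is correct and follows essentially the same route as the paper's: factor $|t-\tau|$ out of the phase via the fundamental theorem of calculus, invoke Lemma~\ref{bgtgauss} to pin down $\nabla_x d_{0,\rho}$ at the base point $x_0$ (giving the model phase $-\cos(\theta-\alpha)$ there), perturb to nearby points using the small support of $A_{\lambda,\rho}$, and then split $S^1$ into stationary and non-stationary regions to get the $(1+|\mu|)^{-1/2}$ bound. The only cosmetic difference is bookkeeping: the paper parametrizes the factored phase by the chord direction $\sigma_{x,x'}=(x-x')/|x-x'|$ and proves the uniform bound for the resulting integral $J_\mu^\rho(x,x',\sigma)$ over all unit vectors $\sigma$, whereas you integrate along the curve and use the tangent $x'(\tau)$ as the reference direction; since $\Gamma$ is smooth and parametrized by arc length, $|x(t)-x(\tau)|\sim|t-\tau|$ and the two reductions are interchangeable.
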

	
	\begin{proof}
	Define
	\[
			K_\lambda^\rho(x,x') = \lambda \int_{S^1} e^{-i\lambda [d_{0,\rho}(x, \omega) - d_{0,\rho}(x', \omega)]} A_{\lambda,\rho}(x,\omega) \overline{A_{\lambda,\rho}(x',\omega)} \,d\omega
		\]
	Since $\Gamma$ is smooth and parametrized by arc length, it suffices to show that
	\begin{equation}
		\label{kerxx}
			| K_\lambda^\rho(x,x') | \lesssim \lambda (1+\lambda |x-x'|)^{-1/2}
		\end{equation}
	We can write
	\[
			d_{0,\rho}(x,\omega)-d_{0,\rho}(x',\omega) = (x-x') \cdot \Psi_{0,\rho}(x,x',\omega)
		\]
	where
	\[
			\Psi_{0,\rho}(x,x',\omega) = \int_0^1 \nabla_x d_{0,\rho}\big(x'+s(x-x'),\omega\big) \,ds
		\]
	For $\sigma$ in $S^1$, define
	\[
			\Phi_{0,\rho}(x,x',\sigma,\omega) = \sigma \cdot \Psi_{0,\rho}(x,x',\omega)
		\]
	Now when $x \neq x'$,
	\[
			d_{0,\rho}(x,\omega)-d_{0,\rho}(x',\omega) = |x-x'| \Phi_{0,\rho}(x,x',\sigma_{x,x'},\omega)
		\]
	where
	\[
			\sigma_{x,x'}=\frac{x-x'}{|x-x'|}
		\]
	If we define
	\begin{equation}
		\label{JK}
			J_\mu^\rho(x,x',\sigma) = \int_{S^1} e^{-i\mu\Phi_{0,\rho}(x,x',\sigma,\omega)} A_{\lambda,\rho}(x,\omega) \overline{A_{\lambda,\rho}(x',\omega)} \,d\omega
		\end{equation}
	then it suffices to show that
	\begin{equation}
		\label{Jbounds}
			|J_\mu^\rho(x,x',\sigma)| \lesssim (1+\mu)^{-1/2}
		\end{equation}
		
	Parametrize $S^1$ by
	\[
			\omega(\theta)=(\cos\theta, \sin\theta)
		\]
	for $\theta$ in $[0, 2\pi )$.
	Write
	\[
			\sigma=(\cos\alpha, \sin\alpha)
		\]
	where $\alpha$ is in $[0,2\pi )$.
	Then by Lemma \ref{bgtgauss},
	\[
			\Phi_{0,\rho}(x_0,x_0,\sigma,\omega(\theta))=-\sigma \cdot \omega(\theta) = -\cos(\theta-\alpha)
		\]
	So we have
	\[
			\partial_\theta \Phi_{0,\rho}(x_0,x_0,\sigma,\omega(\theta)) = \sin(\theta-\alpha)
		\]
	and
	\[
			\partial_\theta^2 \Phi_{0,\rho}(x_0,x_0,\sigma,\omega(\theta)) = \cos(\theta-\alpha)
		\]
	There are relatively open sets $A$ and $B$, with $A \cup B = [0, 2\pi )$, such that for $\theta$ in $A$,
	\[
			|\partial_\theta \Phi_{0,\rho}(x_0,x_0,\sigma,\omega(\theta))| \ge c_A
		\]
	and for $\theta$ in $B$,
	\[
			|\partial_\theta^2 \Phi_{0,\rho}(x_0,x_0,\sigma,\omega(\theta))| \ge c_B
		\]
	Here $c_A$ and $c_B$ are positive constants.
	By continuity, if $\delta$ is sufficiently small and $x$, $x'$ are in $B(x_0, c_0\delta)$, then for $\theta$ in $A$,
	\begin{equation}
		\label{nonstph}
			|\partial_\theta \Phi_{0,\rho}(x,x',\sigma,\omega(\theta))| \ge c_A/2
		\end{equation}
	and for $\theta$ in $B$
	\begin{equation}
		\label{stph}
			|\partial_\theta^2 \Phi_{0,\rho}(x,x',\sigma,\omega(\theta))| \ge c_B/2
		\end{equation}
	By using a partition of unity on $S^1$ and abusing notation, it suffices to prove \eqref{Jbounds} in two cases.
	In the first case, we assume that \eqref{nonstph} holds on the support of the amplitude in \eqref{JK}.
	This case can be handled by integrating by parts, which yields much stronger bounds than in \eqref{Jbounds}.
	In the second case, we assume that \eqref{stph} holds on the support of the amplitude in \eqref{JK}.
	This case can be handled by using stationary phase, which yields \eqref{Jbounds}.
	\end{proof}
	
	Now we can use Lemma \ref{kerest} and the Hardy-Littlewood fractional integration inequality to obtain
	\[
			\| T_\lambda^\rho (T_\lambda^\rho)^* f \|_{L^4(\gamma)}
			\lesssim
			\Big\| \int_0^1 \lambda (1+\lambda|t-\tau|)^{-1/2} f\big(x(\tau)\big) \,d\tau \Big\|_{L^4(0,1)}
			\lesssim \lambda^{1/2} \| f \|_{L^{4/3}(\gamma)}
		\]
	This is \eqref{TrTrstar}, so we have proven Lemma \ref{Lunswitched}.
	Now Theorem \ref{thbgt} follows.

\section{Proof of Corollary 1.2}
	Fix $\delta>0$.
	Recall the set
	\begin{equation*}
		H_\delta = \Big\{x \in M : d(x, \partial M) \le \delta \Big\}
	\end{equation*}
	and recall that $E_\delta$ is the complement of $H_\delta$ in $M$.
	Also recall that we are assuming $M$ is a subset of a compact Riemannian manifold $(M_0,g)$ and that $\Delta_0$ is the Laplacian on $M_0$. 
	If $\delta > 0$ is small enough, then we can break up $\gamma$ into $\gamma \cap E_\delta$ and $\gamma \cap H_\delta$,
		where $\gamma \cap H_\delta$ is a broken geodesic with length at most $c_0 \delta^{1/2}$ for some fixed constant $c_0 > 0$.
	This is because the boundary is strictly geodesically concave.
	We can use H\"older's inequality and Theorem~\ref{thbgt} to control $\| e_j \|_{L^2(\gamma \cap H_\delta)}$. This gives
	\begin{equation}
		\label{gamma2}
			\| e_j \|_{L^2(\gamma \cap H_\delta)} \lesssim \delta^{1/8} \| e_j \|_{L^4(\gamma \cap H_\delta)} \lesssim \delta^{\frac{1}{8}} \lambda_j^{\frac{1}{4}}
		\end{equation}
	
	Choose $\chi \in \cal{S}(\R)$ with $\chi(0)=1$ and $\hat{\chi}$ supported on a closed interval contained strictly inside of $(\frac{1}{2}\delta,\delta)$.
	Define $\chi_\lambda(s)=\chi(s-\lambda)$ and $\rho_\lambda(s)=\chi_\lambda(s)+\chi_\lambda(-s)$.
	For large $\lambda$, we have
	\[
		1/2 \le | \rho_\lambda(\lambda) |
	\]
	To control $\| e_j \|_{L^2(\gamma \cap E_\delta)}$ we will use the following inequality.
	\begin{Theorem}
		\label{specbou}
			Let $p \ge 2$ and assume $\delta$ is small. If $\gamma$ is a unit length geodesic on $M_0$ and $\lambda \ge 1$, then there is a constant $C_\delta$ independent of the choice of $\gamma$ such that
			\[
					\| \rho_\lambda(\sqrt{-\Delta_0}) f \|_{L^2(\gamma)}+ \| \chi_\lambda(\sqrt{-\Delta_0}) f \|_{L^2(\gamma)} \le C_\delta \lambda^{\frac{1}{2p}} \| f \|_{L^p(M_0)}
				\]
		\end{Theorem}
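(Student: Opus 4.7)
The plan is to split $\rho_\lambda = \chi_\lambda + \chi_\lambda(-\cdot)$, handle the minus term via Lemma~\ref{neglap}, and reduce the main estimate to an operator version of Bourgain's inequality~\cite{B}. By Lemma~\ref{neglap}, the kernel of $\chi_\lambda(-\sqrt{-\Delta_0})$ is uniformly bounded in $\lambda$. Combined with $|\gamma| \le 1$ and compactness of $M_0$,
\[
\|\chi_\lambda(-\sqrt{-\Delta_0})f\|_{L^2(\gamma)} \le |\gamma|^{1/2}\|\chi_\lambda(-\sqrt{-\Delta_0})f\|_{L^\infty(M_0)} \le C\|f\|_{L^1(M_0)} \le C\|f\|_{L^p(M_0)},
\]
which is absorbed into the right side. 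So it suffices to prove the estimate for $\chi_\lambda(\sqrt{-\Delta_0})$.

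For this, the case $p=2$ is exactly Theorem~\ref{bgtth}. For general $p \ge 2$, I would follow Bourgain's argument~\cite{B}, which originally establishes $\|e_j\|_{L^2(\gamma)}\le C\lambda_j^{1/(2p)}\|e_j\|_{L^p(M_0)}$ for eigenfunctions on compact surfaces without boundary. The argument passes to the operator level because it uses only that $e_j$ has frequency content localized near $\lambda$, a property also shared by $\chi_\lambda(\sqrt{-\Delta_0})f$ by construction. Concretely, one uses a parametrix representation (analogous to Lemmas~\ref{kernel} and~\ref{unkernel}) giving a kernel of the form $\lambda^{1/2} e^{-i\lambda d_0(x,y)}A_\lambda(x,y)$ modulo uniformly bounded remainder, and then a $TT^*$ analysis exploiting the Carleson-Sj\"olin condition for the geodesic-distance phase (in the spirit of Lemma~\ref{Lcsj}). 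Uniform $L^p$-boundedness of $\chi_\lambda(\sqrt{-\Delta_0})$ (by H\"ormander-Mihlin-type multiplier theorems for $1<p<\infty$) then allows one to replace $\|\chi_\lambda f\|_{L^p}$ on the right side of Bourgain's conclusion by $\|f\|_{L^p}$; the trivial endpoint $p=\infty$ is absorbed into $C_\delta$ by H\"older.

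The hard part is obtaining the sharp exponent $\lambda^{1/(2p)}$ for $p>2$. Naive Riesz-Thorin interpolation between the $p=2$ estimate (norm $\lambda^{1/4}$) and the trivial $p=\infty$ estimate (norm $\lambda^{1/2}$, obtained by bounding the $L^1$-norm in $y$ of the kernel) only yields $\lambda^{1/2-1/(2p)}$, which is strictly weaker. Bourgain's refined stationary-phase and bilinear analysis, going beyond these one-dimensional kernel bounds, is what produces the improved exponent and is the essential input for the proof.
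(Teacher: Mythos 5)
Your overall structure matches the paper's: split $\rho_\lambda = \chi_\lambda(\cdot) + \chi_\lambda(-\cdot)$, handle the negative-frequency term via the uniform kernel bound from Lemma~\ref{neglap} together with $|\gamma|\le 1$ and H\"older on the compact manifold, and cite Bourgain for the $\chi_\lambda$ estimate. The paper's proof is precisely this citation plus the $\rho_\lambda$ reduction, and your treatment of the $\chi_\lambda(-\sqrt{-\Delta_0})$ term is correct.

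However, your proposed route to the $\chi_\lambda$ estimate contains a real error. You suggest applying Bourgain's eigenfunction inequality to $g = \chi_\lambda(\sqrt{-\Delta_0})f$ and then replacing $\|\chi_\lambda f\|_{L^p}$ by $\|f\|_{L^p}$ on the right using ``uniform $L^p$-boundedness of $\chi_\lambda(\sqrt{-\Delta_0})$ by H\"ormander--Mihlin-type multiplier theorems for $1<p<\infty$.'' This is false: $\chi_\lambda(\sqrt{-\Delta_0})$ is \emph{not} uniformly $L^p$-bounded for $p\ne 2$. The multiplier $s\mapsto \chi(s-\lambda)$ is confined to an $O(1)$-width band near $s=\lambda$ and its rescaled derivatives grow in $\lambda$, so the Mihlin conditions fail uniformly; concretely, the parametrix kernel $\lambda^{1/2}e^{-i\lambda d_0(x,y)}A_\lambda(x,y)$ (plus bounded remainder) on a fixed-size support gives $\sup_x\int|K(x,y)|\,dy\sim\lambda^{1/2}$, hence $\|\chi_\lambda(\sqrt{-\Delta_0})\|_{L^\infty\to L^\infty}\sim\lambda^{1/2}$ and, by interpolation against $L^2$, $\|\chi_\lambda(\sqrt{-\Delta_0})\|_{L^p\to L^p}\sim\lambda^{|1/2-1/p|}$. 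Your transfer step therefore loses a power of $\lambda$ and breaks the estimate. The paper does not need this step because Bourgain's result is already stated at the level of the approximate spectral projector $\chi_\lambda(\sqrt{-\Delta_0})$ with $\|f\|_{L^p}$ on the right, so no eigenfunction-to-operator transfer occurs. Your alternative suggestion --- reproduce the oscillatory-integral/$TT^*$ argument at the operator level using a parametrix kernel as in Lemmas~\ref{kernel}, \ref{unkernel}, \ref{Lcsj} --- is indeed the correct conceptual picture of Bourgain's proof, but in that case the H\"ormander--Mihlin step is superfluous. A small further inaccuracy: the $p=2$ case is not ``exactly'' Theorem~\ref{bgtth}, which is an $L^4(\gamma)$ bound; it follows from it only after H\"older with $|\gamma|\le 1$.
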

	Bourgain \cite{B} proved this inequality for $\chi_\lambda$, and the inequality for $\rho_\lambda$ follows easily from Lemma ~\ref{neglap}.
	Recall \eqref{tycos}, which says
	\[
		(\rho_\lambda(\sqrt{-\Delta_g})f)\big{|}_{\gamma \cap E_\delta}=(\rho_\lambda(\sqrt{-\Delta_0})f)|_{\gamma \cap E_\delta}
	\]
	for $f$ in $L^2(M)$.
	So Theorem \ref{specbou} yields
	\begin{equation}
		\label{gamma1}
			\| e_j \|_{L^2(\gamma \cap E_\delta)} \le C_\delta \lambda_j^{\frac{1}{2p}} \| e_j \|_{L^p(M)}
		\end{equation}
	Now if $\delta$ is sufficiently small, Corollary~\ref{cob} follows from \eqref{gamma2} and \eqref{gamma1}.
	
\section{Proof of Proposition 1.6}
	
	For sufficiently small $\delta > 0$, we can break up $\gamma$ into $\gamma \cap E_\delta$ and $\gamma \cap H_\delta$,
		where $\gamma \cap H_\delta$ is a broken geodesic with length at most $c_0 \delta^{1/2}$ for some fixed constant $c_0 > 0$.
	This is because the boundary is strictly geodesically concave.
	By H\"older's inequality and Theorem \ref{thbgt},
	\[
			\limsup_{j \to \infty} \lambda_j^{-1/4} \| e_j \|_{L^2(\gamma \cap H_\delta)}
			\lesssim \limsup_{j \to \infty} \lambda_j^{-1/4} \delta^{1/8} \| e_j \|_{L^4(\gamma \cap H_\delta)}
			\lesssim \delta^{1/8}
		\]	
	Now it suffices to prove
	\[
			\limsup_{j \to \infty} \lambda_j^{-1/4} \| e_j \|_{L^2(\gamma \cap E_\delta)} = 0
		\]
	By breaking up $\gamma \cap E_\delta$ into pieces and abusing notation, we may assume that $\gamma$ is a geodesic in $M$ with $d_g(\gamma, \partial M) \ge \delta$ and moreover,
		that $\gamma$ is of length $L$ where $L$ is small and may depend on $\delta$.
	With these assumptions, we can follow the proof by Sogge \cite{Sog} for the boundaryless version of this problem, making only very minor modifications.
	 
	The proof will make use of Fermi normal coordinates about $\gamma$.
	These coordinates are well-defined on some neighborhood $W$ of $\gamma$.
	In this coordinate system, $\gamma$ becomes $\{(s,0): s \in [0,L] \}$ and the metric satisfies
	\[
		g_{ij}(s,0)=\delta^{ij}
	\]
	In the Fermi coordinates, the principal symbol $p_0$ of $\sqrt{-\Delta_0}$ satisfies
	\[
		p\big( (s,0),\xi \big)=|\xi|
	\]
	Let $\psi \in C_0^\infty(M)$ be supported strictly inside $W \cap E_{\delta/2}$ with $\psi=1$ on $\gamma$.
	Let $A$, $B_1$, and $B_2$ be pseudodifferential operators of order zero with symbols satisfying
	\[
			\psi(x)=A(x,\xi)+B_1(x,\xi)+B_2(x,\xi)
		\]
	In the Fermi coordinates, assume that $A$ is supported outside a conic neighborhood of the $\xi_1$-axis, $B_1$ is essentially supported in a conic neighborhood of the positive $\xi_1$-axis,
		and $B_2$ is essentially supported in a conic neighborhood of the negative $\xi_1$-axis.
	We also assume that $Af=0$ if $f$ is supported in $H_{\delta/2}$.
	
	Fix a positive integer $N$ and a real-valued $\chi \in \mathcal{S}(\mathbb{R})$ with $\chi(0)=1$.
	Assume the support of $\hat{\chi}$ is strictly inside $(-1/2,1/2)$.
	Define $\chi_{N,\lambda}(s)=\chi(N(s-\lambda))$ and $\rho_\lambda(s)=\chi(\delta(s-\lambda))+\chi_\lambda(\delta(-s-\lambda))$.
	Then
	\[
		\chi_{N,\lambda}(\lambda)=1
	\]
	and for large $j$,
	\[
		1/2 \le | \rho_\lambda(\lambda) |
	\]
	Let $B=B_1+B_2$.
	It suffices to show
	\[
			\| A \rho_\lambda(\sqrt{-\Delta_g})f \|_{L^2(\gamma)} + \| B \chi_{N,\lambda}(\sqrt{-\Delta_g})f \|_{L^2(\gamma)} \le C N^{-1/2} \lambda^{1/4} \|f\|_{L^2(M)}+C_N \|f\|_{L^2(M)}
		\]
	We have
	\[
		A \rho_\lambda(\sqrt{-\Delta_g})f=(\delta\pi)^{-1} \int \hat{\chi}(t/\delta)e^{-it\lambda} A \cos(t\sqrt{-\Delta_g})f \,dt
	\]
	Note the support of the integrand is strictly inside $(-\delta/2, \delta/2)$.
	
	The operator $U$ defined by $Uf(t,x)=\cos(t\sqrt{-\Delta_0})f(x)$ is a Fourier integral operator from $M_0$ to $M_0 \times \mathbb{R}$.
	Its canonical relation is
	\[
			\Big\{ (x,t,\xi, \tau; y, \eta): (x,\xi)=\Phi_t(y,\eta), \pm \tau=p_0(x,\xi) \Big\}
		\]
	where $\Phi_t: T^*M_0 \to T^*M_0$ is the geodesic flow on the cotangent bundle of $M_0$.
	The operator $V$ defined by $Vf(t,x)=\big{(} \cos(t\sqrt{-\Delta_0})f\big{)} \big{\vert}_\gamma(x)$ is a Fourier integral operator from $M_0$ to $\gamma \times \mathbb{R}$.
	Using the Fermi normal coordinates, we can write its canonical relation as
	\[
			\mathcal{C}=\Big\{ \big{(}(s,0),t,\xi_1,\tau;y,\eta\big{)}: \big{(}(s,0),\xi \big{)}=\Phi_t(y,\eta), \pm \tau = |\xi| \Big\}
		\]
	Then the projection from $\mathcal{C}$ to $T^*(\gamma \times \mathbb{R})$ is given by the map
	\[
		(s,t,\xi) \to (s,t,\xi_1, |\xi| )
	\]
	This has surjective differential away from $\xi_2=0$.

	If $|t| < \delta/2$, then by our assumptions on $A$,
	\[
			A \big{(} \cos(t\sqrt{-\Delta_g}) f\big{)} =A \big{(} \cos(t\sqrt{-\Delta_0})f\big{)}
		\]
	Define an operator by
	\[
			f \to \big{(}A ( \cos(t\sqrt{-\Delta_0})f)\big{)} \big{\vert}_\gamma
		\]
	This is a non-degenerate Fourier integral operator of order zero, because $A$ is supported away from the $\xi_1$-axis.
	This implies that
	\[
			\int | \hat{\chi}(t/\delta)| \, \| A \big{(} \cos(t\sqrt{-\Delta_g}) f\big{)} \|_{L^2(\gamma)} dt \lesssim \| f \|_{L^2(M)}
		\]
	which yields
	\[
			\| A \rho_\lambda(\sqrt{-\Delta_g})f \|_{L^2(\gamma)} \lesssim \| f \|_{L^2(M)}
		\]
		
	It remains to control the operators $\chi_\lambda^{N,B_j}$ defined by
	\[
			\chi_\lambda^{N,B_j} f = B_j \circ \chi(N(\sqrt{-\Delta_g}-\lambda))f=N^{-1}\int \hat{\chi}(t/N) e^{-it\lambda} \Big{(}B_j \circ e^{it\sqrt{-\Delta_g}} \Big{)} f dt
		\]
	Define an operator $V_j$ by
	\[
		V_j f(t,x) =  \Big( (B_j \circ e^{it\sqrt{-\Delta_g}} \circ B_j^*)f \Big) (x)
	\]
	Fix a distribution $u$ supported in the interior of $M$.
	Assume that $(t,x,\tau,\xi)$ is in the wave front set of $V_j u$.
	Then $(x,\xi)$ is in the essential support of $B_j$, and for some $(y,\eta)$ in the essential support of $B_j$,
		there is a broken geodesic $\Gamma$ satisfying $\Gamma(0)=y$, $\Gamma'(0)=\eta$, $\Gamma(t)=x$ and $\Gamma'(t)=\xi$.
	Since $\gamma$ is not contained in a periodic broken geodesic, the cutoffs $\psi$ and $B_j$ can be chosen with sufficiently small supports so that $V_j u$ is a smooth function over $2L \le |t| \le N+1$.
	That is, the operator $V_j$ is smoothing over the region $2L \le |t| \le N+1$.
	
	Define an operator $U_j$ by
	\[
		U_j f(t,x) =  \Big( (B_j \circ e^{it\sqrt{-\Delta_0}} \circ B_j^*)f \Big) (x)
	\]
	Then the operator $V_j-U_j$ is smoothing over the region $|t| \le 10L$, if $L$ is small.
		
	Let $T$ be the operator $f \to (\chi_\lambda^{N,B_j}f)\big{|}_\gamma$.
	We want to show that
	\[
			\| Tf\|_{L^2(M)} \le (CN^{-1/2} \lambda^{1/4}+C_{N,B_j}) \| f \|_{L^2(\gamma)}
		\]
	We will use the $TT^*$ method.
	We have
	\[
			\| T^*g \|_{L^2(M)}^2=\int_M T^*g \overline{T^*g} \,dx=\int_\gamma \big( TT^*g \big) \overline{g} \,ds \le \| TT^*g \|_{L^2(\gamma)} \| g \|_{L^2(\gamma)}
		\]
	So by duality, it suffices to prove that
	\begin{equation}
		\label{ttstar}
			\| TT^*g \|_{L^2(\gamma)} \le (CN^{-1} \lambda^{1/2}+C_{N,B_j}) \| g \|_{L^2(\gamma)}
		\end{equation}
		
	Let $w(\tau)=(\chi(\tau))^2$.
	Then the kernel of $TT^*$ is $K(\gamma(s),\gamma(s'))$ where $K(x,y)$ is the kernel of the operator $B_j \circ w(N(\sqrt{-\Delta_g}-\lambda)) \circ B_j^*$.
	Also $\hat{w}$ is supported in $[-1,1]$, since $\hat{w}=\hat{\chi}*\hat{\chi}$.
	Now
	\[
			B_j \circ w(N(\sqrt{-\Delta_g}-\lambda)) \circ B_j^* = N^{-1} \int \hat{w}(t/N) e^{-it\lambda} \Big{(}B_j \circ e^{it\sqrt{-\Delta_g}} \circ B_j^*\Big{)} \,dt
		\]
	Let $\varphi \in C_0^\infty(\mathbb{R})$ be supported on $[-1,1]$ with $\varphi = 1$ on $[-1/2,1/2]$.
	Now, by the smoothing properties of the operators $V_j$ and $V_j-U_j$, the difference between $B_j \circ w(N(\sqrt{-\Delta_g}-\lambda)) \circ B_j^*$ and
	\begin{equation}
		\label{lastpiece}
			N^{-1} \int \varphi (t/5L) \hat{w}(t/N)e^{-it\lambda} \Big{(}B_j \circ e^{it\sqrt{-\Delta_0}} \circ B_j^*\Big{)} \,dt
		\end{equation}
	has a kernel which is $\mathcal{O}(\lambda^{-m})$ for all $m$, so it remains to control the kernel of the operator \eqref{lastpiece}.
	If $5L$ is less than the injectivity radius of $M_0$, then the Hadamard parametrix can be used here.
	Then by stationary phase arguments, it follows that the kernel of the operator \eqref{lastpiece} satisfies
	\[
			|K(x,y)| \le CN^{-1}\lambda^{1/2}(d_g(x,y))^{-1/2}+C_{B_j}
	\]
	This yields \eqref{ttstar}, completing the proof of Proposition \ref{open}.

\linespread{2}


\begin{thebibliography}{9}

\bibitem{B}Bourgain, J. (2009). Geodesic restrictions and $L^p$-estimates for eigenfunctions of Riemannian surfaces. \emph{Linear and Complex Analysis: Dedicated to V. P. Havin on the Occasion of His 75th Birthday, American Math. Soc. Translations, Advances in the Mathematical Sciences}, 27-35.

\bibitem{BGT}Burq, N., G\'erard, P. and Tzvetkov, N. (2007). Restriction of the Laplace-Beltrami eigenfunctions to submanifolds. \emph{Duke Math. J.} \textbf{138}, 445-486.

\bibitem{G}Grieser, D. (1992). \emph{$L^p$ bounds for eigenfunctions and spectral projections of the Laplacian near concave boundaries}. Ph.D. Thesis. University of California, Los Angeles: USA.

\bibitem{H}H\"ormander, L. (1971). Fourier integral operators. I. \emph{Acta Math.} \textbf{127}, 79-183.

\bibitem{MTBook}Melrose, R. and Taylor, M. Boundary problems for the wave equation with grazing and gliding rays. Manuscript.

\bibitem{R}Reznikov, A. Norms of geodesic restrictions for eigenfunctions on hyperbolic surfaces and representation theory. Preprint, arXiv:math.AP/0403437.

\bibitem{SSog2}Smith, H. and Sogge, C.D. (1994). $L^p$ regularity for the wave equation with strictly convex obstacles. \emph{Duke Math. J.} \textbf{73}, 97-155.

\bibitem{SSog}Smith, H. and Sogge, C.D. (1995). On the critical semilinear wave equation outside convex obstacles. \emph{J. Amer. Math. Soc.} \textbf{8}, 879-916.

\bibitem{SogBook}Sogge, C.D. (1993). \emph{Fourier integrals in classical analysis}. Cambridge: Cambridge University Press.

\bibitem{Sog}Sogge, C.D. Kakeya-Nikodym averages and $L^p$-norms of eigenfunctions. \emph{To appear in Tohoku Math. J.}

\bibitem{ZZ}Zelditch, S. and Zworski, M. (1996). Ergodicity of eigenfunctions for ergodic billiards. \emph{Comm. Math. Phys.} \textbf{175}, 673-682.

\bibitem{Z}Zworski, M. (1990). High frequency scattering by a convex obstacle. \emph{Duke Math. J.} \textbf{61}, 545-634.

\end{thebibliography}
\end{document}